%% LyX 2.3.1-1 created this file.  For more info, see http://www.lyx.org/.
%% Do not edit unless you really know what you are doing.
\documentclass[11pt,reqno]{article}
\usepackage[utf8]{inputenc}
\usepackage{geometry}
\geometry{verbose,tmargin=2cm,bmargin=2cm,lmargin=2.5cm,rmargin=2.5cm}
\setcounter{tocdepth}{2}
\usepackage{color}
\usepackage{amsmath}
\usepackage{amsthm}
\usepackage{amssymb}
\PassOptionsToPackage{normalem}{ulem}
\usepackage{ulem}
\usepackage[unicode=true,pdfusetitle,
 bookmarks=true,bookmarksnumbered=false,bookmarksopen=false,
 breaklinks=false,pdfborder={0 0 1},backref=page,colorlinks=true]
 {hyperref}
\hypersetup{
 linkcolor=blue, citecolor=blue}

\makeatletter
%%%%%%%%%%%%%%%%%%%%%%%%%%%%%% Textclass specific LaTeX commands.
\numberwithin{equation}{section}
\numberwithin{figure}{section}
\theoremstyle{plain}
\newtheorem{thm}{\protect\theoremname}[section]
\theoremstyle{plain}
\newtheorem{cor}[thm]{\protect\corollaryname}
\theoremstyle{plain}
\newtheorem{prop}[thm]{\protect\propositionname}
\theoremstyle{plain}
\newtheorem{lem}[thm]{\protect\lemmaname}
\theoremstyle{remark}
\newtheorem{rem}[thm]{\protect\remarkname}

%%%%%%%%%%%%%%%%%%%%%%%%%%%%%% User specified LaTeX commands.
\usepackage{graphicx}
\usepackage{appendix}

\usepackage{enumitem}
\usepackage[backref=page]{hyperref}
\usepackage{multicol}

\makeatother

\providecommand{\corollaryname}{Corollary}
\providecommand{\lemmaname}{Lemma}
\providecommand{\propositionname}{Proposition}
\providecommand{\remarkname}{Remark}
\providecommand{\theoremname}{Theorem}

\begin{document}
\global\long\def\F{\mathrm{\mathbf{F}} }%
\global\long\def\Aut{\mathrm{Aut}}%
\global\long\def\C{\mathbf{C}}%
\global\long\def\H{\mathcal{H}}%
\global\long\def\U{\mathsf{U}}%
\global\long\def\ext{\mathrm{ext}}%
\global\long\def\hull{\mathrm{hull}}%
\global\long\def\triv{\mathrm{triv}}%
\global\long\def\Hom{\mathrm{Hom}}%

\global\long\def\trace{\mathrm{tr}}%
\global\long\def\End{\mathrm{End}}%

\global\long\def\L{\mathcal{L}}%
\global\long\def\W{\mathcal{W}}%
\global\long\def\E{\mathbb{E}}%
\global\long\def\SL{\mathrm{SL}}%
\global\long\def\R{\mathbf{R}}%
\global\long\def\Pairs{\mathrm{PowerPairs}}%
\global\long\def\Z{\mathbf{Z}}%
\global\long\def\rs{\to}%
\global\long\def\A{\mathcal{A}}%
\global\long\def\a{\mathbf{a}}%

\global\long\def\b{\mathbf{b}}%
\global\long\def\df{\mathrm{def}}%
\global\long\def\eqdf{\stackrel{\df}{=}}%
\global\long\def\ZZ{\overline{Z}}%
\global\long\def\Tr{\mathrm{Tr}}%
\global\long\def\N{\mathbf{N}}%
\global\long\def\std{\mathrm{std}}%
\global\long\def\HS{\mathrm{H.S.}}%
\global\long\def\spec{\mathrm{spec}}%
\global\long\def\Ind{\mathrm{Ind}}%
\global\long\def\half{\frac{1}{2}}%
\global\long\def\Re{\mathrm{Re}}%
\global\long\def\Im{\mathrm{Im}}%
\global\long\def\Rect{\mathrm{Rect}}%
\global\long\def\Crit{\mathrm{Crit}}%
\global\long\def\Stab{\mathrm{Stab}}%
\global\long\def\SL{\mathrm{SL}}%
\global\long\def\Tab{\mathrm{Tab}}%
\global\long\def\Cont{\mathrm{Cont}}%
\global\long\def\I{\mathcal{I}}%
\global\long\def\J{\mathcal{J}}%
\global\long\def\short{\mathrm{short}}%
\global\long\def\Id{\mathrm{Id}}%
\global\long\def\B{\mathcal{B}}%
\global\long\def\ax{\mathrm{ax}}%
\global\long\def\cox{\mathrm{cox}}%
\global\long\def\row{\mathrm{row}}%
\global\long\def\col{\mathrm{col}}%
\global\long\def\X{\mathbb{X}}%

\global\long\def\V{\mathcal{V}}%
\global\long\def\P{\mathbb{P}}%
\global\long\def\Fill{\mathsf{Fill}}%
\global\long\def\fix{\mathsf{fix}}%
\global\long\def\reg{\mathrm{reg}}%
\global\long\def\edge{E}%
\global\long\def\id{\mathrm{id}}%
\global\long\def\emb{\mathrm{emb}}%

\global\long\def\Hom{\mathrm{Hom}}%
 
\global\long\def\F{\mathrm{\mathbf{F}} }%
  
\global\long\def\pr{\mathrm{Prob} }%
 
\global\long\def\tr{\mathrm{tr} }%
\global\long\def\gs{\mathsf{GS}}%
 
\global\long\def\Xcov{{\scriptscriptstyle \overset{\twoheadrightarrow}{\mathcal{\gs}}}}%
 
\global\long\def\covers{\leq_{\Xcov}}%
\global\long\def\core{\mathrm{Core}}%
\global\long\def\pcore{\mathrm{PCore}}%
\global\long\def\im{\mathrm{imm}}%
\global\long\def\br{\mathsf{BR}}%
\global\long\def\ebs{\mathsf{EBS}}%
\global\long\def\ev{\mathrm{ev}}%
\global\long\def\CC{\mathcal{C}}%
\global\long\def\sides{\mathrm{Sides}}%
\global\long\def\tp{\mathrm{top}}%
\global\long\def\lf{\mathrm{left}}%
\global\long\def\MCG{\mathrm{MCG}}%

\global\long\def\defect{\mathrm{Defect}}%
\global\long\def\M{\mathcal{M}}%
\global\long\def\O{\mathcal{O}}%
\global\long\def\EE{\mathcal{E}}%
\global\long\def\sign{\mathrm{sign}}%

\global\long\def\v{\mathfrak{v}}%
\global\long\def\e{\mathfrak{e}}%
\global\long\def\f{\mathfrak{f}}%
\global\long\def\D{\mathfrak{D}}%
\global\long\def\d{\mathfrak{d}}%
\global\long\def\he{\mathfrak{he}}%
\global\long\def\OVB{\mathsf{OvB}}%
\global\long\def\G{\mathcal{G}}%

\global\long\def\SU{\mathsf{SU}}%
\global\long\def\p{\mathfrak{p}}%
\global\long\def\NN{\mathcal{N}}%
\global\long\def\hN{\hat{\mathcal{N}}}%
\global\long\def\Wg{\mathrm{Wg}}%
\global\long\def\Res{\mathrm{Res}}%
\global\long\def\ABG{\mathrm{ABG}}%
\global\long\def\zd{\mathrm{zd}}%
\global\long\def\SSTab{\mathcal{SST}}%
\global\long\def\FF{\mathcal{F}}%
\global\long\def\q{\mathfrak{q}}%
\global\long\def\PSU{\mathsf{PSU}}%
\global\long\def\irr{\mathrm{irr}}%
\global\long\def\su{\mathfrak{su}}%
\global\long\def\Ad{\mathrm{Ad}}%
\global\long\def\m{\mathbf{m}}%
\global\long\def\x{\mathbf{x}}%
\global\long\def\LR{\mathsf{LR}}%
\global\long\def\Q{\mathbf{Q}}%
\global\long\def\Z{\mathbf{Z}}%
\global\long\def\T{\mathcal{T}}%
\global\long\def\tkld{\dot{\T}_{_{n}}^{k,\ell}}%
\global\long\def\tkl{\T_{_{n}}^{k,\ell}}%

\title{Random Unitary Representations of Surface Groups I:\linebreak{}
Asymptotic expansions}
\author{Michael Magee}
\maketitle
\begin{abstract}
In this paper we study random representations of fundamental groups
of surfaces into special unitary groups. The random model we use is
based on a symplectic form on moduli space due to Atiyah, Bott, and
Goldman. Let $\Sigma_{g}$ denote a topological surface of genus $g\geq2$.
We establish the existence of a large $n$ asymptotic expansion, to
any fixed order, for the expected value of the trace of any fixed
element of $\pi_{1}(\Sigma_{g})$ under a random representation of
$\pi_{1}(\Sigma_{g})$ into $\SU(n)$. Each such expected value involves
a contribution from all irreducible representations of $\SU(n)$.
The main technical contribution of the paper is effective analytic
control of the entire contribution from irreducible representations
outside finite sets of carefully chosen rational families of representations.
\end{abstract}
\tableofcontents{}

\section{Introduction}

Let $g\in\N$ with $g\geq2$ and let $\Sigma_{g}$ denote a closed
topological surface of genus $g$. If $x_{0}$ is a point in $\Sigma_{g}$
then we have
\[
\pi_{1}(\Sigma_{g},x_{0})\cong\Gamma_{g}\eqdf\langle a_{1},b_{1},\ldots,a_{g},b_{g}\,|\,[a_{1},b_{1}]\cdots[a_{g},b_{g}]\rangle.
\]
The group $\Gamma_{g}$ is called the \emph{surface group} of genus
$g$. For $n\in\N$ the group $\U(n)$ is the group of $n\times n$
complex unitary matrices with respect to the standard Hermitian inner
product on $\C^{n}$. Then $\SU(n)$ is the subgroup of $\U(n)$ consisting
of matrices with unit determinant.

The space of homomorphisms $\Hom(\Gamma_{g},\SU(n))$ is given the
topology coming from the embedding
\begin{equation}
\Hom(\Gamma_{g},\SU(n))\hookrightarrow\SU(n)^{2g},\quad\phi\mapsto(\phi(a_{1}),\phi(b_{1}),\ldots,\phi(a_{g}),\phi(b_{g})).\label{eq:product-identification-hom-space}
\end{equation}
This embedding shows that $\Hom(\Gamma_{g},\SU(n))$ is an algebraic
variety, but it is a variety with singularities \cite[pg. 204 Prop.]{GOLDMANSYMPLECTIC}.
We let $\Hom(\Gamma_{g},\SU(n))^{\irr}$ denote the collection of
homomorphisms $\phi$ such that $\phi$ is irreducible as a linear
representation of $\Gamma_{g}$. The space $\Hom(\Gamma_{g},\SU(n))^{\irr}$
then inherits the structure of a smooth non-complete manifold from
(\ref{eq:product-identification-hom-space}) (\emph{ibid.)}.

There is an action of $\SU(n)$ on $\Hom(\Gamma_{g},\SU(n))$ by postcomposition
with inner automorphisms; from the point of view of (\ref{eq:product-identification-hom-space})
this is just the diagonal action of $\SU(n)$ by conjugation. This
action factors through an action of $\PSU(n)$, that is, $\SU(n)$
modulo its finite center. The quotient by this action is denoted by
$\Hom(\Gamma_{g},\SU(n))/\PSU(n)$. It is shown by Goldman in \emph{(ibid.)
}that the action of $\PSU(n)$ on $\Hom(\Gamma_{g},\SU(n))^{\irr}$
is free and the \emph{moduli space
\[
\M_{g,n}\eqdf\Hom(\Gamma_{g},\SU(n))^{\irr}/\PSU(n)
\]
}is a smooth real manifold. This moduli space is the underlying set
of \emph{random representations }of $\Gamma_{g}$ discussed in this
paper. By a theorem of Narasimhan and Seshadri \cite{NarasimhanSeshadri},
if a complex structure on $\Sigma_{g}$ is fixed, $\M_{g,n}$ corresponds
via a natural map to a moduli space of stable holomorphic rank-$n$
vector bundles on $\Sigma_{g}$.

To describe the law of the random representation we need to recall
some further results of Goldman. In \emph{(ibid}.), Goldman shows
that there is a natural symplectic form $\omega_{g,n}$ on $\M_{g,n}$,
defined up to a scalar normalization that we fix in $\S\S$\ref{subsec:The-Atiyah-Bott-Goldman-measure}.
This symplectic form arose previously in the work of Atiyah and Bott
\cite{AB}. It is analogous to the Weil-Petersson form on the Teichmüller
space of complex structures on $\Sigma_{g}$ and is defined precisely
in $\S\S$\ref{subsec:The-Atiyah-Bott-Goldman-measure} of this paper.
The symplectic form $\omega_{g,n}$ yields a volume form
\[
d\mathrm{Vol}_{\M_{g,n}}\eqdf\frac{\wedge^{\frac{1}{2}\dim\M_{g,n}}(\omega_{g,n}^{\ABG})}{(\dim\M_{g,n})!}.
\]
The random representations in this paper are sampled according to
this volume form, normalized to be a probability measure. We call
the normalized measure the \emph{Atiyah-Bott-Goldman }measure.

The statistics of random representations we are interested in come
from functions on \\
$\Hom(\Gamma_{g},\SU(n))$ that are invariant under conjugation by
$\SU(n)$. The natural functions to integrate on moduli spaces like
$\M_{g,n}$ are \emph{geometric functions} (as studied e.g. by Mirzakhani
\cite{Mirzakhani} in the Weil-Petersson context). These functions
are also called \emph{Wilson loops} in the theoretical physics literature.

We now fix a concrete instance of a family of geometric functions.
For $g\in\U(n)$ let $\tr(g)$ denote the trace of $g$ as an $n\times n$
matrix. Given any element $\gamma\in\Gamma_{g}$, we obtain a continuous
function
\[
\tr_{\gamma}:\Hom(\Gamma_{g},\SU(n))\to\C,\quad\tr_{\gamma}(\phi)\eqdf\tr(\phi(\gamma)).
\]
Clearly $\tr_{\gamma}$ is invariant under conjugation by $\SU(n)$,
and hence yields a continuous bounded function that we give the same
name
\[
\tr_{\gamma}:\M_{g,n}\to\C.
\]
In this paper we instigate a study of the expected value of $\tr_{\gamma}$,
that is, 
\begin{equation}
\E_{g,n}[\tr_{\gamma}]\eqdf\frac{\int_{\M_{g,n}}\tr_{\gamma}\,d\mathrm{Vol}_{\M_{g,n}}}{\int_{\M_{g,n}}\,d\mathrm{Vol}_{\M_{g,n}}}.\label{eq:expectation-ratio}
\end{equation}
For fixed $\gamma$, we are interested in the large $n$ behavior
of this expected value. One has the simple bound
\[
|\E_{g,n}[\tr_{\gamma}]|\leq n
\]
and this bound is attained if $\gamma=\id_{\Gamma_{g}}$. On the other
hand, if $\gamma\in\Gamma_{g}$ is not the identity, then a basic
prediction is 
\begin{equation}
\lim_{n\to\infty}\frac{|\E_{g,n}[\tr_{\gamma}]|}{n}=0.\label{eq:convergence-to-regular-rep}
\end{equation}
The significance of this prediction is that it extends a celebrated
result of Voiculescu \cite[Theorem 3.8]{Voiculescu1991} on the asymptotic
$*$-freeness of Haar unitary matrices, suitably interpreted, from
free groups to surface groups. The current paper lays the groundwork
for the proof of (\ref{eq:convergence-to-regular-rep}) in the next
paper in the series \cite{MageeRURSG2}.

Not only that, but here we will expose a separate phenomenon for the
values $\E_{g,n}[\tr_{\gamma}]$: they can be approximated to any
order $O(n^{-M})$ as $n\to\infty$ by a Laurent polynomial in $n$
depending on $\gamma$. The formal theorem is the following:
\begin{thm}
\label{thm:Main-theorem}For any $g\ge2$ and $\gamma\in\Gamma_{g}$
there is an infinite sequence of rational numbers
\[
a_{-1}(\gamma),a_{0}(\gamma),a_{1}(\gamma),a_{2}(\gamma),\ldots
\]
 such that for any $M\in\N$, as $n\to\infty$
\begin{equation}
\E_{g,n}[\tr_{\gamma}]=a_{-1}(\gamma)n+a_{0}(\gamma)+\frac{a_{1}(\gamma)}{n}+\cdots+\frac{a_{M-1}(\gamma)}{n^{M-1}}+O(n^{-M}).\label{eq:main-theorem-eq}
\end{equation}
\end{thm}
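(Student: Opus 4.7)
The starting point is the description, originating in the work of Witten, Migdal, and others, of the Atiyah--Bott--Goldman measure through harmonic analysis on $\SU(n)$. The pushforward of Haar measure on $\SU(n)^{2g}$ under the product-of-commutators map is a central class function whose Fourier expansion over the irreducible unitary representations $\rho$ of $\SU(n)$ has coefficients involving $(\dim\rho)^{2-2g}$. After accounting for the quotient by $\PSU(n)$ and the restriction to the irreducible locus, one obtains identities of the form
\[
\int_{\M_{g,n}} \tr_\gamma\, d\mathrm{Vol}_{\M_{g,n}} \;=\; C_{g,n} \sum_{\rho} T_\rho(\gamma),\qquad \int_{\M_{g,n}} d\mathrm{Vol}_{\M_{g,n}} \;=\; C_{g,n}\sum_\rho (\dim \rho)^{2-2g},
\]
so that $\E_{g,n}[\tr_\gamma]$ becomes a ratio of explicit series over irreducibles. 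Each $T_\rho(\gamma)$ is a combinatorial quantity built from $\dim \rho$ together with Littlewood--Richardson coefficients encoding how the standard representation of $\SU(n)$ is coupled to $\rho$ along a chosen word expressing $\gamma$ in the generators $a_i, b_i$.

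I would then parameterize irreducibles of $\SU(n)$ by signatures and stratify them by \emph{shape}: a signature is of fixed shape if its nonzero entries, viewed as a pair of Young diagrams for the positive and conjugate-negative parts, are held constant as $n$ grows. The resulting \emph{rational families} are indexed by combinatorial data independent of $n$; within each such family, the Weyl dimension formula gives $\dim \rho$ as an explicit polynomial in $n$ of degree equal to the total number of boxes, and the Littlewood--Richardson coefficients entering $T_\rho(\gamma)$ stabilize for $n$ sufficiently large. Each rational family therefore contributes a term with a full Laurent expansion in $1/n$ and rational coefficients, and summing the expansions from finitely many families produces a candidate Laurent polynomial of any prescribed length.

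The heart of the paper, as signalled in the abstract, is the effective bound showing that, for any $M$, the total contribution to $\E_{g,n}[\tr_\gamma]$ coming from irreducibles outside a sufficiently large finite collection of rational families is $O(n^{-M})$. This is the main obstacle: the number of irreducibles of $\SU(n)$ grows super-polynomially with $n$, so individual summands must be controlled with some care, and the coupling through $\gamma$ mixes different representations in a non-trivial way. The plan is to use the lower bound $\dim \rho \gtrsim n^{|\lambda|}$ extracted from the Weyl denominator, where $|\lambda|$ is the total box count of the signature of $\rho$, together with the trivial bound $|\tr \rho(u)| \leq \dim \rho$, all organized by a stratification that separately tracks the sizes of the positive and conjugate-negative parts of the signature and the first order at which the Littlewood--Richardson data forced by $\gamma$ can be nonzero. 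Summing each stratum geometrically and weighing the contribution against the cardinality of the stratum is designed to yield a tail bound of the required form.

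Once the tail estimate is in place, Theorem \ref{thm:Main-theorem} is assembled by, for each $M$, isolating the finitely many rational families whose asymptotic order exceeds $n^{-M}$, expanding each of their contributions as a Laurent series in $1/n$ to order $M$, and absorbing everything else into the $O(n^{-M})$ error supplied by the tail estimate. The rationality of the coefficients $a_i(\gamma)$ then follows because the Weyl dimension formula and all Littlewood--Richardson coefficients involved take rational (indeed, integer) values.
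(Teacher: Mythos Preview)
Your overall architecture matches the paper's: the Fourier expansion over $\widehat{\SU(n)}$ via Sengupta's formula, the organization of irreducibles into rational families $W_n^{[\mu,\nu]}$ indexed by pairs of Young diagrams, the observation that each fixed family contributes a rational function of $n$, and the reduction of Theorem \ref{thm:Main-theorem} to a tail estimate. Where your proposal has a genuine gap is precisely in that tail estimate, which is the main technical content of the paper.

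Your proposed mechanism for the tail---combining the dimension lower bound $\dim\rho \gtrsim n^{|\lambda|}$ with the trivial bound $|\tr\rho(u)| \leq \dim\rho$---does not give enough decay. The quantity to be controlled is $\sum_\rho (\dim\rho)\,\mathcal{I}(w,\rho)$, and the trivial character bound yields only $|\mathcal{I}(w,\rho)| \leq n\dim\rho$, so the summand is bounded by $n(\dim\rho)^2$, which does not even converge. Nor is there a Littlewood--Richardson vanishing mechanism to rescue this: already for $w=\id$ one has $\mathcal{I}(\id,\rho) = n(\dim\rho)^{1-2g}$, nonzero for every $\rho$, and for general $w\in[\F_{2g},\F_{2g}]$ the coupling between $\tr(w(x))$ and $\overline{s_\lambda(R_g(x))}$ does not force vanishing for large $\lambda$. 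The essential difficulty, flagged explicitly in the paper, is that applying the Weingarten calculus directly to $\mathcal{I}(w,\lambda)$ involves $\Wg_{n,|\lambda|+O(|w|)}$, which becomes intractable once $|\lambda|\gg n$.

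What the paper actually does is a two-stage integration. After splitting $\tr(w(x))$ into matrix-coefficient pieces $w_I(x)$ indexed by tuples $I$ with at most $\D\leq|w|$ distinct entries, each $w_I$ is bi-invariant under $\U(n-\D)^{2g}$. One first integrates $\overline{s_\lambda(R_g(\,\cdot\,))}$ over $\U(n-\D)^{2g}$ double cosets; here the product-of-commutators structure of $R_g$ is exploited explicitly (via an operator $B_\lambda$ on $\End(W_n^\lambda)^{\otimes 2g}$) and produces a factor $D_\mu(n-\D)^{1-2g}$ for each $\mu\subset^\D\lambda$ in the branching. Only then is the Weingarten calculus applied, and the bound extracted is made to depend on the skew shape $\lambda/\mu$ rather than on $|\lambda|$. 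This requires a construction of auxiliary tensors $A^{\lambda,\mu}_{s,t}\in\End((\C^n)^{\otimes|\lambda|})$ whose projection onto $\U(n)$-invariants can be computed and whose norm is known exactly. The resulting bound (Theorem \ref{thm:single-dimension-bound}) carries the crucial factor $D_\lambda(n)^{-2g}$, which is what makes the tail sum converge and decay as needed. Your proposal does not supply any mechanism to extract this factor.
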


Theorem \ref{thm:Main-theorem} has the following direct corollary.
\begin{cor}
For any $\gamma\in\Gamma_{g}$ the limit 
\[
\lim_{n\to\infty}\frac{\E_{g,n}[\tr_{\gamma}]}{n}
\]
 exists.
\end{cor}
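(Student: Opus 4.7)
The plan is to apply Theorem \ref{thm:Main-theorem} with the smallest nontrivial truncation $M=1$ and then divide by $n$. Concretely, specializing (\ref{eq:main-theorem-eq}) to $M=1$ yields, as $n\to\infty$,
\[
\E_{g,n}[\tr_{\gamma}] = a_{-1}(\gamma)\,n + O(1).
\]
Dividing both sides by $n$ gives
\[
\frac{\E_{g,n}[\tr_{\gamma}]}{n} = a_{-1}(\gamma) + O(n^{-1}),
\]
from which the limit exists and equals the rational number $a_{-1}(\gamma)$.

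There is no main obstacle: the corollary is a direct extraction of the leading coefficient from the asymptotic expansion, and the only content is that the error term in the expansion is asymptotically negligible compared to $n$. I would simply record the identification $\lim_{n\to\infty} \E_{g,n}[\tr_{\gamma}]/n = a_{-1}(\gamma)$ for completeness, since the rationality of this limit is itself a notable consequence of Theorem \ref{thm:Main-theorem}.
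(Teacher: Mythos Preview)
Your proof is correct and matches the paper's approach exactly: the paper simply states that the corollary follows directly from Theorem~\ref{thm:Main-theorem}, and your argument is precisely the one-line extraction of the leading coefficient $a_{-1}(\gamma)$ that makes this explicit.
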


The main technical result we prove in order to establish Theorem \ref{thm:Main-theorem}
is interesting in its own right so we discuss this now. The quantity
in the denominator of (\ref{eq:expectation-ratio}), i.e., the symplectic
volume of $\M_{g,n}$, was calculated non-rigorously by Witten in
\cite{Witten1991}. Witten's result is in terms of \emph{Witten zeta
functions}, so named by Zagier in \cite{Zagier}. The \emph{Witten
zeta function} of $\SU(n)$ is defined by the series
\begin{equation}
\zeta(2g-2;n)\eqdf\sum_{\substack{\text{(\ensuremath{\rho,W)\in\widehat{\SU(n)}}}}
}\frac{1}{(\dim W)^{s}}.\label{eq:witten-def}
\end{equation}
Here $\widehat{\SU(n)}$ is the set of equivalence class of irreducible
representations of $\SU(n)$. The sum in (\ref{eq:witten-def}) converges
absolutely for $\Re(s)>\frac{2}{n}$ by a result of Larsen and Lubotzky
\cite[Thm. 5.1]{LL} (see also \cite[\S 2]{HS} for an alternative
proof of this fact). The following formula that Witten obtained was
rigorously established to hold by Sengupta in \cite{Sengupta}.
\begin{thm}[Witten's formula]
\label{thm:Witten}With the normalization of $\mathrm{Vol}_{\M_{g,n}}$
fixed as in $\S\S\ref{subsec:The-Atiyah-Bott-Goldman-measure}$, we
have 
\[
\int_{\M_{g,n}}d\mathrm{Vol}_{\M_{g,n}}=n\zeta(2g-2;n).
\]
\end{thm}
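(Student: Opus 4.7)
The plan is to combine the Frobenius class-function identity with Goldman's realisation of $\omega_{g,n}^{\ABG}$ as the reduction form associated to viewing $\Hom(\Gamma_{g},\SU(n))$ as the level set of a moment map --- an approach pioneered by Witten and made rigorous by Sengupta.

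First, I would exploit the single-relator presentation of $\Gamma_{g}$ to realise $\Hom(\Gamma_{g},\SU(n))$ as the preimage of the identity under the commutator word map
\[
\Phi:\SU(n)^{2g}\to\SU(n),\qquad \Phi(\mathbf{a},\mathbf{b}) = [a_{1},b_{1}]\cdots[a_{g},b_{g}].
\]
An iterated Schur-orthogonality computation, starting from $\int_{\SU(n)}\rho(a)M\rho(a^{-1})\,da = \frac{\tr(M)}{\dim\rho}\Id$ applied $g$ times, yields for each $\rho\in\widehat{\SU(n)}$,
\[
\int_{\SU(n)^{2g}}\chi_{\rho}\bigl(\Phi(\mathbf{a},\mathbf{b})\bigr)\,d\mathbf{a}\,d\mathbf{b} \;=\; \frac{1}{(\dim\rho)^{2g-1}}.
\]
Consequently the pushforward of Haar on $\SU(n)^{2g}$ under $\Phi$ has density $p(c) = \sum_{\rho}\chi_{\rho}(c)/(\dim\rho)^{2g-1}$ with respect to Haar on $\SU(n)$, whose value at $c=e$ is $\sum_{\rho}(\dim\rho)^{2-2g} = \zeta(2g-2;n)$. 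By the coarea formula this is the total mass of the canonical delta-fibre measure $d\sigma_{e}$ on $\Phi^{-1}(e) = \Hom(\Gamma_{g},\SU(n))$.

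Second, I would invoke Goldman's interpretation of $\omega_{g,n}^{\ABG}$: the product $\SU(n)^{2g}$ carries a natural quasi-Hamiltonian structure for the diagonal conjugation action whose moment map is $\Phi$, and $\Hom(\Gamma_{g},\SU(n))$ is the moment fibre over $e$. Symplectic reduction produces $\omega_{g,n}^{\ABG}$ on $\M_{g,n}$ via an identity of the form
\[
q^{*}d\mathrm{Vol}_{\M_{g,n}} = C\cdot d\sigma_{e}\bigr|_{\Hom(\Gamma_{g},\SU(n))^{\irr}}
\]
where $q:\Hom(\Gamma_{g},\SU(n))^{\irr}\to\M_{g,n}$ is the quotient map and $C$ is an explicit constant depending on the normalisation fixed in $\S\S\ref{subsec:The-Atiyah-Bott-Goldman-measure}$ and on the Haar volume of a free $\PSU(n)$-orbit. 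The factor of $n$ in Witten's formula arises because the moment map is equivariant for the full $\SU(n)$-action whereas the free quotient is by $\PSU(n) = \SU(n)/Z(\SU(n))$ with $|Z(\SU(n))| = n$; with the chosen normalisation of $\omega_{g,n}^{\ABG}$, this discrete factor combines with the Frobenius mass to produce $n\cdot\zeta(2g-2;n)$.

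The main obstacle I foresee is making the distributional pushforward rigorous: the identity ``$p(e) = $ total mass of $d\sigma_{e}$'' requires $e$ to be a regular value of $\Phi$, which fails on the reducible locus. I would regularise by replacing the delta-function at $e$ with the heat kernel $h_{t}$ on $\SU(n)$, so that the Frobenius sum acquires a Gaussian factor $e^{-tc(\rho)}$ ensuring absolute convergence, and then send $t\to 0^{+}$; alternatively one can use the quasi-Hamiltonian reduction of Alekseev--Malkin--Meinrenken, where a coarea identity holds globally. Either route requires verifying that the reducible locus --- although of positive codimension --- contributes zero mass in the limit, and matching the abstract reduction volume with the explicit ABG normalisation. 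This is precisely the analytic step where Sengupta's argument contributes essentially over Witten's formal derivation.
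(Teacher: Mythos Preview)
The paper does not give its own proof of this theorem; it is stated as a result of Witten that was rigorously established by Sengupta, and is simply cited from \cite{Sengupta}. Your sketch is essentially an outline of the Witten--Sengupta argument itself: the Frobenius identity for $\int\chi_{\rho}(R_{g})$, the heat-kernel regularisation, and the passage to the symplectic volume via symplectic reduction are exactly the ingredients of that proof. Note that the paper does import from Sengupta a more general statement (Theorem~\ref{thm:Sengupta}) which, specialised to $f\equiv 1$ and combined with the Frobenius integral \eqref{eq:R_g_integral}, yields Witten's formula in one line: the heat-kernel expansion gives
\[
\lim_{t\to 0^{+}}\sum_{(\rho,W)}e^{-C(\rho)t/2}\frac{\dim W}{(\dim W)^{2g-1}}=\zeta(2g-2;n)=\frac{1}{n}\int_{\M_{g,n}}d\mathrm{Vol}_{\M_{g,n}},
\]
with the interchange of limit and sum justified by absolute convergence of the Witten zeta series for $g\geq 2$.

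One remark on your sketch: your attribution of the factor $n$ to $|Z(\SU(n))|=n$ is the right heuristic, but in the rigorous treatment this factor is baked into the precise relationship between the product Haar measure on $\SU(n)^{2g}$, the delta-fibre measure on $\Phi^{-1}(e)$, and the normalisation of the Killing form chosen so that $\SU(n)$ has unit Haar volume; it is this compatibility that Sengupta's Theorem~1 packages, and which appears as the $\tfrac{1}{n}$ in Theorem~\ref{thm:Sengupta}. Your identification of the reducible locus as the analytic obstacle, and of the heat-kernel limit as the remedy, is exactly right and is the content of Sengupta's contribution over Witten's formal derivation.
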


In fact Sengupta also provided a method to compute the integral of
any continuous function on $\M_{g,n}$ with respect to $\mathrm{Vol}_{\M_{g,n}}$
and this is the starting point of our work. We let $\F_{2g}\eqdf\langle a_{1},b_{1},\ldots,a_{g},b_{g}\rangle$
be the free group on the generators $a_{1},b_{1},\ldots,a_{g},b_{g}$.
Let $R_{g}\eqdf[a_{1},b_{1}]\cdots[a_{g},b_{g}]\in\F_{2g}$. Therefore
we have a surjective homomorphism $\F_{2g}\xrightarrow{q_{g}}\Gamma_{g}$
obtained from quotient by the normal subgroup generated by $R_{g}$.
We say that $w\in\F_{2g}$ represents the conjugacy class of $\gamma\in\Gamma_{g}$
if $q_{g}(w)$ is conjugate to $\gamma$ in $\Gamma_{g}$. For any
$w\in\F_{2g}$ there is a \emph{word map} $w:\SU(n)^{2g}\to\SU(n)$
obtained by substituting elements of $\SU(n)$ into the letters of
$w$. We write $d\mu_{\SU(n)^{2g}}^{\mathrm{Haar}}(x)$ for the probability
Haar measure on $\SU(n)^{2g}$, this is the product of the probability
Haar measures on the $2g$ factors.

One has the following corollary of Sengupta's main result \cite[Thm.1]{Sengupta}.
\begin{cor}
\label{cor:Fourier-expansion-of-main-integral}Let $g\geq2$ and $\gamma\in\Gamma_{g}$.
Suppose that $w\in\F_{2g}$ is an element representing the conjugacy
class of $\gamma$. Then 
\begin{equation}
\E_{g,n}[\tr_{\gamma}]=\zeta(2g-2;n)^{-1}\sum_{\substack{\text{\text{(\ensuremath{\rho,W)\in\widehat{\SU(n)}}}}}
}(\dim W)\mathcal{I}(w,\rho),\label{eq:integral-formula}
\end{equation}
where 
\begin{equation}
\mathcal{I}(w,\rho)\eqdf\int\tr(w(x))\overline{\tr(\rho(R_{g}(x)))}d\mu_{\SU(n)^{2g}}^{\mathrm{Haar}}(x)\label{eq:I-def}
\end{equation}
\uline{if} the sum on the right hand side of (\ref{eq:integral-formula})
is absolutely convergent.
\end{cor}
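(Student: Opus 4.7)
The plan is to derive \eqref{eq:integral-formula} as a direct specialization of Sengupta's main theorem \cite[Thm.~1]{Sengupta} combined with Witten's volume formula (Theorem~\ref{thm:Witten}). The idea is that $\tr_\gamma$ is the pull-back of a continuous class function on $\SU(n)^{2g}$, which Sengupta's theorem allows one to integrate against $d\mathrm{Vol}_{\M_{g,n}}$ via a Peter-Weyl expansion over $\widehat{\SU(n)}$.

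First, I would rewrite the integrand on $\SU(n)^{2g}$. Under \eqref{eq:product-identification-hom-space}, $\Hom(\Gamma_g,\SU(n))$ is the subvariety cut out by $R_g(x)=e$. If $w\in\F_{2g}$ represents the conjugacy class of $\gamma$, then $q_g(w)=\alpha\gamma\alpha^{-1}$ for some $\alpha\in\Gamma_g$, and the conjugation invariance of trace gives $\tr_\gamma(\phi)=\tr(w(\phi(a_1),\ldots,\phi(b_g)))$ for every $\phi\in\Hom(\Gamma_g,\SU(n))$. The right-hand side extends to a continuous class function $x\mapsto\tr(w(x))$ on $\SU(n)^{2g}$. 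Second, I would apply Sengupta's theorem to this function. Sengupta expresses the integral of any continuous class function $f$ on $\SU(n)^{2g}$ against (the pull-back of) $d\mathrm{Vol}_{\M_{g,n}}$ as
\[
n\int_{\SU(n)^{2g}} f(x)\,\delta_{e}(R_g(x))\,d\mu^{\mathrm{Haar}}_{\SU(n)^{2g}}(x),
\]
with the factor $n$ being the same one that appears in Theorem~\ref{thm:Witten}, and with $\delta_e$ interpreted via the Peter-Weyl expansion
\[
\delta_e(g)=\sum_{(\rho,W)\in\widehat{\SU(n)}}(\dim W)\,\overline{\chi_\rho(g)}.
\]
Substituting $f(x)=\tr(w(x))$ yields $\int_{\M_{g,n}}\tr_\gamma\,d\mathrm{Vol}_{\M_{g,n}}=n\sum_{(\rho,W)}(\dim W)\,\mathcal{I}(w,\rho)$; the case $w=\id$ recovers $n\,\zeta(2g-2;n)$ via the standard identity $\int\chi_\rho([x_1,y_1]\cdots[x_g,y_g])\,d\mu^{\mathrm{Haar}}_{\SU(n)^{2g}}=(\dim W)^{-(2g-1)}$. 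Dividing numerator by denominator cancels the factor of $n$ and produces \eqref{eq:integral-formula}.

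The main obstacle is the justification for interchanging the sum over $\widehat{\SU(n)}$ with the integration against $d\mu^{\mathrm{Haar}}_{\SU(n)^{2g}}$ in the formal derivation. This is exactly the role of the absolute convergence hypothesis stated in the corollary: once it holds, Fubini's theorem legitimises the interchange and the formal derivation becomes rigorous. All the substance of the proof is contained in Sengupta's theorem; the present statement is genuinely a corollary, the only real work being to verify that the normalizing factor of $n$ in the numerator matches that in Theorem~\ref{thm:Witten} and therefore cancels.
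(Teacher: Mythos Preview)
Your overall strategy is right and matches the paper's, but the analytic justification has a real gap. Sengupta's theorem as quoted in the paper (Theorem~\ref{thm:Sengupta}) is \emph{not} stated with a delta function: it says
\[
\lim_{t\to 0_+}\int_{\SU(n)^{2g}} f(x)\,\overline{Q_t(R_g(x))}\,d\mu^{\mathrm{Haar}}_{\SU(n)^{2g}}(x)=\frac{1}{n}\int_{\M_{g,n}}\tilde f\,d\mathrm{Vol}_{\M_{g,n}},
\]
with $Q_t$ the heat kernel. The paper then inserts the heat kernel expansion $Q_t(h)=\sum_{(\rho,W)} e^{-C(\rho)t/2}(\dim W)\tr(\rho(h))$, interchanges sum and integral \emph{for fixed $t>0$} (legitimate because this expansion converges uniformly), and finally sends $t\to 0_+$ using dominated convergence, the dominating sequence being furnished precisely by the absolute convergence hypothesis on $\sum(\dim W)\,\I(w,\rho)$.

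Your appeal to Fubini at the last step does not work. The Fubini/Tonelli criterion would require
\[
\sum_{(\rho,W)}\int_{\SU(n)^{2g}}\bigl|\tr(w(x))\bigr|\,(\dim W)\,\bigl|\chi_\rho(R_g(x))\bigr|\,d\mu^{\mathrm{Haar}}_{\SU(n)^{2g}}(x)<\infty,
\]
but each term here is of size $n(\dim W)^2$, so the sum diverges. The hypothesis you are given is only that $\sum(\dim W)\,|\I(w,\rho)|<\infty$, which is much weaker; it controls the integrated terms, not the pointwise ones. The heat kernel factors $e^{-C(\rho)t/2}$ are exactly what supply the missing decay to make the interchange valid at each stage, and then dominated convergence (not Fubini) is the tool that removes them in the limit. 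Everything else in your outline---the identification of $\tr_\gamma$ with $\tr(w(\,\cdot\,))$, the division by Witten's volume formula, the cancellation of the factor $n$---is correct and matches the paper.
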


We explain how to obtain Corollary \ref{cor:Fourier-expansion-of-main-integral}
from \cite[Thm.1]{Sengupta} in $\S\S$\ref{subsec:The-Atiyah-Bott-Goldman-measure}
using ideas already presented in \cite{Sengupta}. Let $[\Gamma_{g},\Gamma_{g}]$
denote the commutator subgroup of $\Gamma_{g}$. Using Corollary \ref{cor:Fourier-expansion-of-main-integral}
it is not hard to show:
\begin{prop}
\label{prop:commutator-prop}If $\gamma\notin[\Gamma_{g},\Gamma_{g}]$
then there exists $n_{0}=n_{0}(\gamma)$ such that for $n\geq n_{0}$
\[
\E_{g,n}[\tr_{\gamma}]=0.
\]
\end{prop}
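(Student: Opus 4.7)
The plan is to apply Corollary \ref{cor:Fourier-expansion-of-main-integral} and show that every individual term $\I(w,\rho)$ on the right-hand side of (\ref{eq:integral-formula}) vanishes once $n$ is large enough. The mechanism is a change of variables by central elements of $\SU(n)$, exploiting that $R_{g}$ lies in $[\F_{2g},\F_{2g}]$ while a word $w$ representing $\gamma$ does not.

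Fix a word $w\in\F_{2g}$ representing the conjugacy class of $\gamma$, and write $\alpha_{1}(w),\ldots,\alpha_{2g}(w)\in\Z$ for the exponent sums of the $2g$ free generators in $w$. Because $R_{g}$ is a product of commutators it is trivial in the abelianization, so $\Gamma_{g}/[\Gamma_{g},\Gamma_{g}]\cong\Z^{2g}$, and the hypothesis $\gamma\notin[\Gamma_{g},\Gamma_{g}]$ translates into the existence of some index $j^{\star}$ with $\alpha_{j^{\star}}(w)\neq0$.

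Next, I would apply the substitution $x_{i}\mapsto\zeta_{i}x_{i}$ to the integrand of (\ref{eq:I-def}), where each $\zeta_{i}\in\mu_{n}$ (the group of $n$-th roots of unity) is identified with the central scalar matrix $\zeta_{i}\Id\in Z(\SU(n))$. By left-invariance of Haar measure on each $\SU(n)$ factor, $\I(w,\rho)$ is unchanged. Centrality of the $\zeta_{i}$, together with the vanishing of every exponent sum in $R_{g}$, yields $R_{g}(\zeta\cdot x)=R_{g}(x)$, so the factor $\overline{\tr(\rho(R_{g}(x)))}$ is unaffected. For $w$, the central scalars pull through to give $w(\zeta\cdot x)=\bigl(\prod_{i=1}^{2g}\zeta_{i}^{\alpha_{i}(w)}\bigr)w(x)$, whence
\[
\I(w,\rho)=\Bigl(\prod_{i=1}^{2g}\zeta_{i}^{\alpha_{i}(w)}\Bigr)\I(w,\rho).
\]
Setting $\zeta_{i}=1$ for all $i\neq j^{\star}$ collapses this identity to $\I(w,\rho)=\zeta_{j^{\star}}^{\alpha_{j^{\star}}(w)}\I(w,\rho)$ for every $\zeta_{j^{\star}}\in\mu_{n}$.

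Finally, take $n_{0}\eqdf 1+\max_{i}|\alpha_{i}(w)|$. For any $n\geq n_{0}$ one can choose $\zeta_{j^{\star}}\in\mu_{n}$ with $\zeta_{j^{\star}}^{\alpha_{j^{\star}}(w)}\neq 1$, forcing $\I(w,\rho)=0$ for every $\rho\in\widehat{\SU(n)}$. The series in (\ref{eq:integral-formula}) is then identically zero, hence trivially absolutely convergent, and Corollary \ref{cor:Fourier-expansion-of-main-integral} gives $\E_{g,n}[\tr_{\gamma}]=0$. The only conceptual step is recognizing that the center of $\SU(n)$ acts coordinate-by-coordinate on $\SU(n)^{2g}$; once that symmetry is exploited the commutator structure of $R_{g}$ does the rest, and there is no genuine obstacle beyond this observation.
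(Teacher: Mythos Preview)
Your argument is correct and follows essentially the same route as the paper: the paper proves Proposition \ref{prop:using-U(n)-for-fourier-coefficients} Part \ref{enu:vanishing-expectation-commutator--subgroup} by translating by a central element $z_{0}=\bigl(e\bigl(\tfrac{1}{n}\bigr)\Id,\Id,\ldots,\Id\bigr)$ to obtain $\I(w,\rho)=e\bigl(\tfrac{m_{1}}{n}\bigr)\I(w,\rho)$ for some nonzero exponent sum $m_{1}$, and then invokes Corollary \ref{cor:Fourier-expansion-of-main-integral} exactly as you do. Your explicit observation that the resulting series is identically zero, hence trivially absolutely convergent, is a clean way to check the hypothesis of the corollary.
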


Proposition \ref{prop:commutator-prop} is proved in $\S\S$\ref{subsec:Integrating-over-SU(n)2g}.
This proves Theorem \ref{thm:Main-theorem} in the case that $\gamma\notin[\Gamma_{g},\Gamma_{g}]$. 

We now explain how we prove Theorem \ref{thm:Main-theorem} in general
by using Corollary \ref{cor:Fourier-expansion-of-main-integral}.
We first discuss the zeta function factor in (\ref{eq:integral-formula}).
One has the following theorem due to Guralnick, Larsen and Manack.
\begin{thm}[{\cite[Thm. 2]{GLM}}]
\label{thm:GLM}For any $s>0$, $\lim_{n\to\infty}\zeta(s;n)=1$.
\end{thm}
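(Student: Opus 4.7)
The plan is to isolate the trivial representation (which contributes exactly $1$ to $\zeta(s;n)$) and show that $\sum_{W\neq\triv}(\dim W)^{-s}\to 0$ as $n\to\infty$. I would parametrize $\widehat{\SU(n)}$ by partitions $\lambda$ with $\ell(\lambda)\leq n-1$ and use the hook--content formula
\[
\dim V_\lambda \;=\; \prod_{(i,j)\in\lambda}\frac{n+j-i}{h(i,j)}
\]
to extract lower bounds on $\dim V_\lambda$ in terms of $n$, $|\lambda|$, and $\ell(\lambda)$.

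Two complementary bounds would be central. First, retaining only those factors $(i,j)$ with $i\leq\ell(\lambda)<j\leq n$ gives $\dim V_\lambda\geq\binom{n}{\ell(\lambda)}$. Second, using $n+j-i\geq n-\ell(\lambda)+1$ together with $H(\lambda)\leq|\lambda|!$ for the hook--length product $H(\lambda):=\prod_{(i,j)}h(i,j)$ (which holds because $|\lambda|!/H(\lambda)$ is the positive integer dimension of the $S_{|\lambda|}$-irrep associated to $\lambda$) yields
\[
\dim V_\lambda \;\geq\; \frac{(n-\ell(\lambda)+1)^{|\lambda|}}{|\lambda|!}.
\]

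I would then split the sum over $\lambda\neq\emptyset$ according to $(k,\ell)=(|\lambda|,\ell(\lambda))$. When $\ell\leq n/2$ the second bound gives $(\dim V_\lambda)^{-s}\leq (k!)^s(n/2)^{-sk}$; summing over the at most $p(k)$ partitions of size $k$ produces a geometric series in $n^{-s}$ dominated by its $k=1$ term of size $O(n^{-s})$, which tends to $0$. The main obstacle is the ``tall'' regime $\ell(\lambda)>n/2$, where the second bound degrades. For $\ell(\lambda)$ away from the extreme $n-1$, the first bound $\binom{n}{\ell}^{-s}$ still provides useful decay; and the absolute convergence of $\zeta(s;n)$ for $s>2/n$ from Larsen--Lubotzky furnishes an a priori dominator that, combined with a dyadic decomposition by $\dim V_\lambda$, allows interchange of limit and sum in this range. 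The remaining near-maximal-length case $\ell(\lambda)\to n-1$ is the technical heart; here I would exploit the $\SU(n)$-duality $V_\lambda\cong V_{\hat\lambda}^{*}$ to trade tall diagrams for ones with small multiplicity of the top row, reducing to regimes already handled. Assembling these estimates gives $\zeta(s;n)-1=o(1)$.
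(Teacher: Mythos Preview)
Your two pointwise lower bounds on $\dim V_\lambda$ are correct (the first is really a Weyl--dimension--formula estimate rather than a hook--content one, but no matter), and the regime $\ell(\lambda)\leq n/2$ is handled as you say. The gap is in the tall regime, and your proposed duality reduction cannot close it. For $\SU(n)$ one has $\ell(\lambda^*)=n-m$, where $m$ is the multiplicity of $\lambda_1$ in $\lambda$; since $m\leq\ell(\lambda)$ this gives $\ell(\lambda)+\ell(\lambda^*)\geq n$. Thus duality \emph{never} carries a diagram with $\ell>n/2$ to one with $\ell\leq n/2$; at best it trades large $\ell$ for large top-row multiplicity, which is a different parameter from the two you are tracking. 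For a staircase $\lambda=(n-1,n-2,\ldots,1)$ both $\lambda$ and $\lambda^*$ have length $n-1$, and your two bounds give only $\dim\geq n$ and $\dim\geq 2^{|\lambda|}/|\lambda|!$, neither of which lets you sum over the infinitely many partitions with $\ell=n-1$. The appeal to Larsen--Lubotzky supplies convergence for each fixed $n$ but no $n$-uniform majorant, so the interchange of limit and sum is not justified either.

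The argument in \cite{GLM}, which the paper imports as Lemma~\ref{lem:GLM} and uses to obtain the stronger Corollary~\ref{cor:tail-control-witten}, avoids this by reparametrizing $\widehat{\SU(n)}$ via the \emph{independent} coordinates $x_i(\lambda)=\lambda_i-\lambda_{i+1}\in\N_0$, $i\in[n-1]$. Applying AM--GM to the Weyl dimension formula gives a multiplicative bound $D_\lambda(n)\geq\prod_{i=1}^{n-1}(1+x_i)^{v_i}$ with $v_i\geq\log(n-1)$ for all $i$. The tail of the zeta sum then \emph{factors} as at most $\prod_{i}\zeta(sv_i)-1$, which is elementary to control. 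The structural point your sketch lacks is a parametrization in which the sum decouples; slicing by $(|\lambda|,\ell(\lambda))$ does not achieve this, and no amount of duality patching covers the diagonal region where both $\lambda$ and $\lambda^*$ are tall.
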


The limiting value arises from the trivial representation in (\ref{eq:witten-def});
it is possible to boost the methods of \cite{GLM} to show that $\zeta(2g-2;n)$
can be approximated to any order $O(n^{-M})$ by a Laurent polynomial
in $n$. In fact, the results of this paper can be viewed as a far
generalization of this result and accordingly, the just-mentioned
result is established as a byproduct of our proofs in $\S\S$\ref{subsec:Proof-of-Theorem-main}.

Thus the proof of Theorem \ref{thm:Main-theorem} amounts to showing
that 
\begin{equation}
\sum_{\substack{\text{\text{(\ensuremath{\rho,W)\in\widehat{\SU(n)}}}}}
}(\dim W)\mathcal{I}(w,\rho)\label{eq:main-sum}
\end{equation}
 is absolutely convergent and can be approximated to any order by
some Laurent polynomial. The obvious bad feature of the sum (\ref{eq:main-sum})
is that it runs over infinitely many representations of $\SU(n)$
and moreover, there are more and more of these as $n$ increases.
We aim to approximate (\ref{eq:main-sum}) by finitely many of its
terms and this requires an ordering of the representations of $\widehat{\SU(n)}$. 

The correct way to do this is as follows. For every $k,\ell\in\N$
and pair of Young diagrams $\mu\vdash k$, $\nu\vdash\ell$, with
number of rows given by $\ell(\mu)$, $\ell(\nu)$, for every $n\geq\ell(\mu)+\ell(\nu)$
there is a rational family of irreducible representations denoted
by $(\rho_{n}^{[\mu,\nu]},W_{n}^{[\mu,\nu]})\in\widehat{\SU(n)}$
defined in $\S\S$\ref{subsec:Representation-theory-of-U(n)-SU(n)}.
If we define for $B\in\N$ 
\begin{equation}
\Omega(B;n)\eqdf\{\,(\rho_{n}^{[\mu,\nu]},W_{n}^{[\mu,\nu]}):\,\ell(\mu),\ell(\nu)\leq B,\mu_{1},\nu_{1}\leq B^{2}\,\}\label{eq:omega-def}
\end{equation}
then $\Omega(B;n)$ is in one-to-one correspondence with the $(\mu,\nu)$
such that $\ell(\mu),\ell(\nu)\leq B,\mu_{1},\nu_{1}\leq B^{2}$ when
$n$ is sufficiently large. This specific choice of $\Omega(B;n)$
is for technical convenience, becoming useful in $\S\S$\ref{subsec:Proof-of-Theorem-main}.

We prove bounds on the $\mathcal{I}(w,\rho)$ in Theorem \ref{thm:single-dimension-bound}.
The main challenges are that not only that estimates for $\mathcal{I}(w,\rho)$
must overcome the weights $\dim W$ in (\ref{eq:main-sum}) but also
that these bounds must remain effective for $\dim W$ much larger
than $n$. It is quite well understood that matrix integrals such
as $\mathcal{I}(w,\rho)$ are challenging in this regime as the main
method of performing such integrals, known as the \emph{Weingarten
calculus}, often fail to produce understandable answers there. This
is because the Weingarten function $\Wg_{n,k}$ defined in (\ref{eq:Wg-def})
becomes increasingly complicated for $k\gg n$, drawing on more and
more different representations of large symmetric groups. We overcome
this inherent difficulty as follows. 

Firstly there is a minor observation that in all cases of interest,
$\SU(n)$ can be replaced by $\U(n)$ in (\ref{eq:I-def}) (Proposition
\ref{prop:using-U(n)-for-fourier-coefficients}). The main idea is
then that after some splitting up, parts of $\mathcal{I}(w,\rho)$
can be evaluated by integrating first $\overline{\tr(\rho(R_{g}(x)))}$
over all double cosets for a very large subgroup $\U(n-\D)\leq\U(n)$
where $\D$ is bounded depending only on $w$, which is fixed. During
this first integration, the structure of the word $R_{g}$ can be
exploited to produce a lot of cancellation. 

After this initial integral, we then apply the Weingarten calculus
through a novel strategy (cf. $\S\S$\ref{subsec:Second-integration:-overview})
making heavy use of representation theory of both symmetric groups
and $\U(n)$. What we achieve is the following technical result.
\begin{thm}
\label{thm:high-dim-sum}Suppose that $g\geq2$, $w\in\F_{2g}$ and
$B\in\N$.
\begin{enumerate}
\item For $n\geq n_{0}(w,g)$, the sum in (\ref{eq:integral-formula}) is
absolutely convergent.
\item As $n\to\infty$ 
\begin{equation}
\sum_{\substack{\text{\text{(\ensuremath{\rho,W)\in\widehat{\SU(n)}}\ensuremath{\backslash}\ensuremath{\Omega(B;n)}}}}
}(\dim W)\mathcal{I}(w,\rho)\ll_{B,w,g}n^{|w|}n^{-2\log B}\label{eq:tail-bound}
\end{equation}
where $|w|$ denotes the length of $w$ as a reduced word.
\end{enumerate}
\end{thm}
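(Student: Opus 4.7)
The plan is to deduce Theorem \ref{thm:high-dim-sum} from the single-representation estimate of Theorem \ref{thm:single-dimension-bound} (the true technical core of the paper, proved later via the two-stage integration outlined in the introduction). Granting a suitable bound roughly of the form $|\mathcal{I}(w, \rho_n^{[\mu,\nu]})| \leq C(w,g)\, n^{|w|} (\dim W_n^{[\mu,\nu]})^{-\alpha}$ for some $\alpha > 1$, the passage to Theorem \ref{thm:high-dim-sum} is a combinatorial summation over Young-diagram parameters.

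First, I would rewrite the sum over $\widehat{\SU(n)}$ as a sum over pairs of Young diagrams $(\mu,\nu)$ via the parametrization $(\rho_n^{[\mu,\nu]}, W_n^{[\mu,\nu]})$, valid once $n \geq \ell(\mu) + \ell(\nu)$. For part (1), multiplying the single-representation bound by $\dim W_n^{[\mu,\nu]}$ yields a term controlled by $(\dim W_n^{[\mu,\nu]})^{1-\alpha}$ times a constant in $n, w, g$, and the resulting series converges absolutely by the Larsen--Lubotzky estimate on $\zeta(s;n)$ (as cited in the discussion after \eqref{eq:witten-def}), provided $1-\alpha < -2/n$; this holds for $n \geq n_0(w,g)$.

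For part (2), I would partition the complement $\widehat{\SU(n)} \setminus \Omega(B;n)$ by a union bound into the four regions on which one of the inequalities $\ell(\mu) > B$, $\ell(\nu) > B$, $\mu_1 > B^2$, $\nu_1 > B^2$ holds. On each region, the Weyl dimension formula for $\U(n)$ gives a lower bound $\dim W_n^{[\mu,\nu]} \gtrsim n^{\delta(\mu,\nu)}$ with $\delta(\mu,\nu)$ increasing with the relevant violated parameter; when combined with the negative dimensional weight $(\dim W)^{-\alpha}$ in the single-representation bound and the explicit form $n^{-2\log B}$ required, a careful choice of exponents (calibrated to $B^2$ being the threshold on $\mu_1, \nu_1$) produces the stated decay. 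Summation over the remaining Young-diagram data in each region is then a geometric-series estimate, using standard polynomial upper bounds on the number of Young diagrams of prescribed shape type.

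The main obstacle I anticipate is purely quantitative: tracking constants so that the final bound is polynomial in $n$ of degree exactly $|w|$ (matching the right-hand side of \eqref{eq:tail-bound}) rather than worse, and ensuring the specific exponent $-2\log B$ emerges cleanly from the interplay between the dimensional decay in Theorem \ref{thm:single-dimension-bound} and the four distinct lower bounds on $\dim W_n^{[\mu,\nu]}$ in the four tail regions. A secondary technical issue is that the parametrization $(\mu,\nu) \mapsto (\rho_n^{[\mu,\nu]}, W_n^{[\mu,\nu]})$ is only faithful for $n$ large relative to the diagram shapes, so one must handle the transition uniformly in $n$ and $B$ when extracting the asymptotic decay.
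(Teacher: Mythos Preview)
There is a genuine gap between your outline and what Theorem \ref{thm:single-dimension-bound} actually delivers, and the paper's summation argument is organized quite differently.

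First, the parametrization: the paper does \emph{not} use the pair $(\mu,\nu)$ for the complement of $\Omega(B;n)$. Representations there are parametrized by a single Young diagram $\lambda$ with $\ell(\lambda)\leq n-1$ (see $\S\S$\ref{subsec:Models-of-representations}), and the set $\Lambda(B;n)$ is then described in terms of the highest-weight coordinates $x_i(\lambda)=\lambda_i-\lambda_{i+1}$ rather than in terms of $\ell(\mu),\ell(\nu),\mu_1,\nu_1$. The $(\mu,\nu)$ model is reserved for $\Omega(B;n)$ where $\mu,\nu$ stay bounded; using it for the tail runs into exactly the non-uniformity issue you flag at the end.

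Second, and more seriously, Theorem \ref{thm:single-dimension-bound} does not reduce to a bound of the form $|\mathcal{I}(w,\rho)|\leq C\,n^{|w|}(\dim W)^{-\alpha}$ with fixed $\alpha>1$. After the simplifications of $\S\S$5.2 (Proposition \ref{prop:tidied-up-single-lambda-contrib}) the bound reads
\[
|\mathcal{I}(w,\lambda)|\ll_{w,g} n^{|w|}\,\frac{\bigl(\prod_{j=1}^{n-1}(1+x_j(\lambda))\bigr)^{C(w,g)}}{D_\lambda(n)^{2g-1}},
\]
with a $\lambda$-dependent numerator that can be large. To absorb it one needs a lower bound on $D_\lambda(n)$ expressed in the \emph{same} coordinates; the paper invokes the Guralnick--Larsen--Manack inequality $D_\lambda(n)\geq\prod_i(1+x_i)^{v_i}$ with $v_i\geq i\max(1,\log((n-1)/i))$ (Lemma \ref{lem:GLM}). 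This is what makes the sum over $\lambda\in\Lambda(B;n)$ \emph{factorize} into a product of one-variable zeta-type tails $\sum_{x_j\geq0}(1+x_j)^{-((2g-2)v_j-C)}$, and what yields absolute convergence for $n\gg_{w,g}1$.

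The exponent $-2\log B$ then emerges from a case split on which constraint on $\x(\lambda)$ is violated: either some $x_j>B$ with $j\leq B$ (or $j\geq n-B$), where $v_j\approx j\log(n/j)$ and one picks up a tail $\zeta^{(B+1)}(2v_j-C)\ll B^{-2v_j}$, or some $x_j>0$ with $B<j<n-B$, where $v_j$ is much larger and the decay is stronger still. Your four-region union bound based on $\ell(\mu),\ell(\nu),\mu_1,\nu_1$ together with pure Weyl-dimension lower bounds does not produce this factorized structure or the specific exponent, and in particular the ``geometric-series'' summation you allude to would still have to contend with the numerator $\bigl(\prod_j(1+x_j)\bigr)^{C}$, which your scheme does not track.
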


The point of (\ref{eq:tail-bound}) is not the exact form of the right
hand side, but rather, that it gives effective control of the tail.
Theorem \ref{thm:high-dim-sum} shows that by taking $B$ sufficiently
large and fixed depending on $w$, the contribution to $\E_{g,n}[\tr_{\gamma}]$
from $(\ensuremath{\rho,W)\in\widehat{\SU(n)}}\backslash\ensuremath{\Omega(B;n)}$
can be made to decay as fast as any $n^{-M}$, for $M\in\N$. We have
the following direct corollary of Theorem \ref{thm:high-dim-sum},
Theorem \ref{thm:GLM}, and Corollary \ref{cor:Fourier-expansion-of-main-integral}.
\begin{cor}
Suppose that $g\geq2$, $\gamma\in\Gamma_{g}$, and $w\in\F_{2g}$
represents the conjugacy class of $\gamma$. For any $B\in\N$ we
have as $n\to\infty$
\[
\E_{g,n}[\tr_{\gamma}]=\zeta(2g-2;n)^{-1}\sum_{\substack{\text{\text{(\ensuremath{\rho,W)\in}\ensuremath{\Omega(B;n)}}}}
}(\dim W)\mathcal{I}(w,\rho)+O_{B,w,g}\left(n^{|w|}n^{-2\log B}\right).
\]
 
\end{cor}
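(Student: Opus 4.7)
The plan is to deduce the corollary as a direct assembly of Corollary \ref{cor:Fourier-expansion-of-main-integral}, Theorem \ref{thm:high-dim-sum}, and Theorem \ref{thm:GLM}. First I would invoke Corollary \ref{cor:Fourier-expansion-of-main-integral}, which gives
\[
\E_{g,n}[\tr_{\gamma}]=\zeta(2g-2;n)^{-1}\sum_{(\rho,W)\in\widehat{\SU(n)}}(\dim W)\mathcal{I}(w,\rho),
\]
\emph{provided} the sum on the right is absolutely convergent. This hypothesis is not automatic, but it is supplied by part (1) of Theorem \ref{thm:high-dim-sum}, which asserts absolute convergence as soon as $n\geq n_{0}(w,g)$. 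So for all sufficiently large $n$ (depending only on $w$ and $g$), the displayed identity is at our disposal.

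Next, I would split the Fourier sum according to the partition $\widehat{\SU(n)}=\Omega(B;n)\sqcup\bigl(\widehat{\SU(n)}\setminus\Omega(B;n)\bigr)$, writing
\[
\sum_{(\rho,W)\in\widehat{\SU(n)}}(\dim W)\mathcal{I}(w,\rho)=\sum_{(\rho,W)\in\Omega(B;n)}(\dim W)\mathcal{I}(w,\rho)+\sum_{(\rho,W)\in\widehat{\SU(n)}\setminus\Omega(B;n)}(\dim W)\mathcal{I}(w,\rho).
\]
Part (2) of Theorem \ref{thm:high-dim-sum} bounds the tail sum by $O_{B,w,g}\bigl(n^{|w|}n^{-2\log B}\bigr)$, which is precisely the error term claimed in the corollary.

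Finally, I would dispense with the prefactor $\zeta(2g-2;n)^{-1}$ using Theorem \ref{thm:GLM}: since $\zeta(2g-2;n)\to 1$ as $n\to\infty$, for all $n$ larger than some threshold depending only on $g$ one has $|\zeta(2g-2;n)^{-1}|\leq 2$. Multiplying the tail bound through by this uniformly bounded factor preserves the error estimate, while multiplying the $\Omega(B;n)$-sum by the same prefactor produces the main term appearing in the statement. Combining these three observations yields the corollary. There is essentially no obstacle here; the only subtlety worth flagging is that the absolute convergence premise of Corollary \ref{cor:Fourier-expansion-of-main-integral} must be verified before any rearrangement or truncation of the Fourier sum is legitimate, and this is exactly what Theorem \ref{thm:high-dim-sum}(1) provides.
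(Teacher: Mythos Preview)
Your proof is correct and follows exactly the approach the paper indicates: the corollary is stated there as a direct consequence of Corollary~\ref{cor:Fourier-expansion-of-main-integral}, Theorem~\ref{thm:high-dim-sum}, and Theorem~\ref{thm:GLM}, and your argument assembles precisely these three ingredients in the natural way.
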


\subsection{Related works I: Spaces of representations}

The existence of an asymptotic expansion of $\E_{g,n}[\tr_{\gamma}]$
as in Theorem \ref{thm:Main-theorem} follows a long line of related
results. The most closely related of these is the analog of Theorem
\ref{thm:Main-theorem} when $\SU(n)$ is replaced by the family of
symmetric groups $S_{n}$. For $\pi\in S_{n}$ let $\fix(\rho)$ denote
the number of fixed points of $\pi$, and for $\gamma\in\Gamma_{g}$
let $\fix_{\gamma}:\Hom(\Gamma_{g},S_{n})\to\N$ be the function $\fix_{\gamma}(\phi)\eqdf\fix(\phi(\gamma))$.
The representation space $\Hom(\Gamma_{g},S_{n})$ is finite and we
let $\E_{g,S_{n}}[\fix_{\gamma}]$ denote the expected value of $\fix_{\gamma}$
with respect to the uniform probability measure on $\Hom(\Gamma_{g},S_{n})$.
An exactly analogous result to Theorem \ref{thm:Main-theorem} for
$\E_{g,S_{n}}[\fix_{\gamma}]$ was established by the author and Puder
in \cite[Thm. 1.1]{MPasympcover}.

Similarly, if instead of using a surface group $\Gamma_{g}$, we consider
a free group $\F_{r}$ with $r\geq2$, for any compact Lie group $G$
the representation space $\Hom(\F_{r},G)$ can be identified with
$G^{r}$ and hence can be given the corresponding probability Haar
measure. If $G$ is finite, this is simply the uniform probability
measure. For any character $\chi$ of $G$ and $w\in\F_{r}$ we obtain
a function
\[
\chi_{w}:\Hom(\F_{r},G)\to\C,\quad\chi_{w}(\phi)\eqdf\chi(\phi(w)).
\]
Then we can define $\E_{\F_{r},G}[\chi_{w}]$ to be the expected value
of $\chi_{w}$ with respect to the Haar probability measure. Not only
is the analog of Theorem \ref{thm:Main-theorem} true for many natural
families of $(G(n),\chi(n))$, but actually, in the case of free groups,
$\E_{\F_{r},G(n)}[\chi(n)_{\gamma}]$ is a \emph{rational} function
of $n$ for $n$ sufficiently large. Indeed, for fixed $w\in\F_{r}$,
$\E_{\F_{r},G(n)}[\chi(n)_{\gamma}]$ agrees with a rational function
of $n$ for $n\gg_{w}1$ when 
\begin{itemize}
\item $G(n)=S_{n}$ and $\chi(n)=\fix$ \cite{nica1994number,Linial2010}
\item $G(n)$ is a family of generalized symmetric groups, e.g. hyperoctahedral
groups, and $\chi(n)$ is the trace in a natural defining representation
\cite{mp2019surface}
\item $G(n)=\U(n)$ and $\chi(n)=\tr$ \cite{MPunitary}
\item $G(n)=\mathsf{O}(n)$ or $\mathsf{Sp}(n)$ and $\chi(n)=\tr$ \cite{mp2019orthogonal}.
\end{itemize}

\subsection{Related works II: 2D Yang-Mills theory}

The expected values $\E_{g,n}[\tr_{\gamma}]$ are very closely connected
with the expected value of Wilson loops in 2D Yang-Mills theory. We
briefly explain these connections and some prior work done by theoretical
physicists in the area. It is explained by Sengupta in \cite[Appendix A]{Sengupta}
that if $\Sigma_{g}$ is endowed with a Riemannian metric with associated
area $A=A(\Sigma_{g})$ then
\begin{equation}
\sum_{\substack{\text{\ensuremath{(\rho,W)\in\widehat{\SU(n)}}}}
}e^{-\frac{C(\rho)tA}{2n}}(\dim W)\,\I(w,\rho)\label{eq:Yang-Mills}
\end{equation}
is, in the context of a quantum $\SU(n)$ Yang-Mills theory on $\Sigma_{g}$,
a heuristic definition of the expected value of the Wilson loop measuring
the trace of the holonomy around the loop in $\Sigma_{g}$ that $w\in\F_{2g}$
represents. Here $C(\rho)$ is the Casimir eigenvalue of the representation
$(\rho,W)$ and $t$ is a coupling constant. (We have inserted the
factor $\frac{1}{n}$ in the exponent above so that it matches with
e.g. \cite{Gross_1993}.) It is worth pointing out that the methods
of the current paper should also allow one to effectively and rigorously
approximate (\ref{eq:Yang-Mills}) although this is not pursued here.

The emphasis in the physics literature is not on the rigorous analytic
approximation of integrals such as (\ref{eq:Yang-Mills}), but rather,
on the interpretation of (\ref{eq:Yang-Mills}) as a formal power
series and then reordering the terms and truncating in a formal way.
These manipulations are not intended as having rigorous mathematical
consequences. For example, as we understand, none of the `$\frac{1}{n}$-expansions'
of (\ref{eq:Yang-Mills}) obtained before in any sense rigorously
approximate (\ref{eq:Yang-Mills}); nonetheless, they are significant
to physicists. Since the values $\E_{g,n}[\tr_{\gamma}]$ that we
focus on here correspond to the $t=0$ case of (\ref{eq:Yang-Mills})
via Corollary \ref{cor:Fourier-expansion-of-main-integral}, we briefly
survey what is known to physicists for general $t$, with the disclaimer
that the author is by no means an expert in the concepts of theoretical
physics.

In physics literature, the \emph{chiral} expansion means that the
$(\rho,W)$ are parameterized by $(\rho_{n}^{\lambda},W_{n}^{\lambda})$
where $\lambda$ runs over Young diagrams. For this parameterization
to work (cf. $\S\S\ref{subsec:Representation-theory-of-U(n)-SU(n)}$)
one should restrict to Young diagrams with less than $n$ rows. This
is referred to as the \emph{finite-n} expansion. However, in some
cases this restriction is lifted and (\ref{eq:Yang-Mills}) is interpreted
as a sum over \emph{all} Young diagrams. This is called the \emph{large-n}
expansion. In the chiral expansion, the Young diagrams are ordered
by the number of boxes that they contain.

The \emph{partition function }of the quantum Yang-Mills theory corresponds
to (\ref{eq:Yang-Mills}) in the case $w=\id$, up to a factor $\frac{1}{n}$,
and is given by \cite[2.4]{Gross_1993}
\[
Z(G,tA,n)\eqdf\sum_{\substack{\text{\ensuremath{(\rho,W)\in\widehat{\SU(n)}}}}
}e^{-\frac{C(\rho)tA}{2n}}\frac{1}{(\dim W)^{2g-2}}.
\]

A large-$n$ chiral expansion of the partition function was obtained
by Gross and Taylor in \cite{Gross_1993}. The coefficients of this
expansion are interpreted in terms of branched covers of surfaces
and from this Gross and Taylor deduce their titular statement that
`Two dimensional QCD is a String Theory'. A finite-$n$ chiral expansion
of the partition function in terms of branched covers with some extra
data was obtained by Baez and Taylor in \cite{BT}. In \cite{RAMGOOLAM_1996},
Ramgoolam gives a large-$n$ chiral expansion of (\ref{eq:Yang-Mills})
in terms of branched covers of surfaces. 

In the language of these papers, the expansion we obtain in Theorem
\ref{thm:Main-theorem} is a finite-$n$, fully non-chiral expansion
of the expected value of a Wilson loop, when the coupling constant
is set to zero. The main point is that this asymptotic expansion is
established rigorously through Theorem \ref{thm:high-dim-sum}. 

\subsubsection*{Notation}

We write $\N$ for the natural numbers, $\N_{0}\eqdf\N\cup\{0\}$,
and $\Q$ denotes the rationals. We write $[n]\eqdf\{1,\ldots,n\}$
for $n\in\N$ and $[k,\ell]\eqdf\{k,k+1,\ldots,\ell\}$ for $k,\ell\in\N$,
$k\leq\ell$. If $A$ and $B$ are sets $A\backslash B$ is the set
of elements of $A$ that are not in $B$. We write 
\[
(n)_{\ell}\eqdf n(n-1)\cdots(n-\ell).
\]
We let $e(\theta)\eqdf\exp(2\pi i\theta)$ for $\theta\in\R$. If
$G$ is a group, and $g_{1},g_{2}\in G$, we let $[g_{1},g_{2}]\eqdf g_{1}g_{2}g_{1}^{-1}g_{2}^{-1}$.
We write $[G,G]$ for the subgroup of $G$ generated by elements of
the form $[g_{1},g_{2}]$ for $g_{1},g_{2}\in G$. If $V$ is a complex
vector space and $q\in\N$ we let 
\[
V^{\otimes q}\eqdf\underbrace{V\otimes_{\C}\cdots\otimes_{\C}V}_{q};
\]
 in general if we write a tensor product without explicit subscript
it is over $\C$. We write $\Q(t)$ for the ring of rational functions
in an indeterminate $t$; i.e. ratios of polynomials.

We use Vinogradov notation as follows. If $f$ and $h$ are functions
of $n\in\N$, we write $f\ll h$ to mean that there are constants
$n_{0}\geq0$ and $C_{0}\geq0$ such that for $n\geq n_{0}$, $f(n)\leq C_{0}h(n)$.
We write $f=O(h)$ to mean $f\ll|h|$. We write $f\asymp h$ to mean
both $f\ll h$ and $h\ll f$. If in any of these statements the implied
constants depend on additional parameters we add these parameters
as subscript to $\ll,O,$ or $\asymp$. Throughout the paper we view
the genus $g$ as fixed and so any implied constant may depend on
$g$. 

\subsubsection*{Acknowledgments}

We thank Benoît Collins, Doron Puder, Sanjaye Ramgoolam, and Calum
Shearer for discussions about this work. This project has received
funding from the European Research Council (ERC) under the European
Union’s Horizon 2020 research and innovation programme (grant agreement
No 949143).

\section{Background}

\subsection{Young diagrams and tableaux\label{subsec:Young-diagrams-and}}

\subsubsection*{Young diagrams}

A \emph{Young diagram }(YD) is a collection of left-aligned rows of
identical square boxes in the plane, where the number of boxes in
each row is non-increasing from top to bottom. Any YD $\lambda$ also
gives a non-increasing sequence of natural numbers $(\lambda_{1},\lambda_{2},\ldots,\lambda_{\ell(\lambda)})$
where $\lambda_{i}$ is the number of boxes in the $i$\textsuperscript{th}
(from top to bottom) row of $\lambda$, and $\ell(\lambda)$ is the
number of rows of $\lambda$. A finite non-increasing sequence of
natural numbers is called a \emph{partition}. We think of partitions
and YDs interchangeably in this paper via the above correspondence.
For example, the partition $(k)$ corresponds to the Young diagram
with one row consisting of $k$ boxes. The empty YD with no boxes
is denoted by $\emptyset$. The \emph{size }of a YD $\lambda$ is
the number of boxes that it contains, or $\sum_{i=1}^{\ell(\lambda)}\lambda_{i}$.
The size of $\lambda$ is denoted by $|\lambda|$ and the statement
$|\lambda|=k$ is sometimes written $\lambda\vdash k$.

Given two Young diagrams $\lambda$ and $\mu$, we say $\mu\subset\lambda$
if every box of $\mu$ is a box of $\lambda$. A \emph{skew Young
diagram }(SYD)\emph{ }is a pair $\lambda,\mu$ of Young diagram such
that $\mu\subset\lambda$. This is usually written as $\lambda/\mu$
and $\lambda/\mu$ is thought of as the collection of boxes of $\lambda$
that are not boxes of $\mu$. A Young diagram $\lambda$ is identified
with $\lambda/\emptyset$ and in this way YDs are special cases of
SYDs.

We will have use for the following relations between Young diagrams.
We say $\mu\subset_{k}\lambda$ if $\mu\subset\lambda$ and $\lambda/\mu$
contains $k$ boxes. We say $\mu\subset^{1}\lambda$ if $\mu\subset\lambda$
and no two boxes of $\lambda/\mu$ are in the same column. We say
$\mu\subset^{r}\lambda$ if there is a sequence $\mu_{1},\ldots,\mu_{r-1}$
of YDs such that
\[
\mu\subset^{1}\mu_{1}\subset^{1}\cdots\subset^{1}\mu_{r-1}\subset^{1}\lambda.
\]
Finally, we write $\mu\subset_{k}^{r}\lambda$ if both $\mu\subset_{k}\lambda$
and $\mu\subset^{r}\lambda$.

\subsubsection*{Semistandard Young tableaux}

In the rest of the paper we use the notation $[n]\eqdf\{1,\ldots,n\}$
for $n\in\N$ and $[k,\ell]\eqdf\{k,k+1,\ldots,\ell\}$ for $k,\ell\in\N$.

Given a SYD $\lambda/\mu$ (which may in fact be a YD) and a subset
$S\subset\N$, a \emph{semistandard tableau} of shape $\lambda/\mu$
with entries in $S$ is a filling of the boxes of $\lambda/\mu$ with
the numbers of $S$ such that the numbers in the boxes are strictly
increasing along columns from top to bottom and non-strictly increasing
along rows from left to right. If $\lambda/\mu$ is a SYD we write
\[
\SSTab_{[k,\ell]}(\lambda/\mu)
\]
 for the semistandard tableaux of shape $\lambda/\mu$ with entries
in $[k,\ell]$. We also use all obvious variants of this notation,
e.g. for YD $\lambda$ and $n\in\N$, $\SSTab_{[n]}(\lambda)$ is
the collection of semistandard tableaux of shape $\lambda$ with entries
in $[n]$. There is by convention a unique semistandard tableau (with
any entry set) of shape the empty YD (or an empty SYD).

If $\lambda/\mu$ is a SYD, $n>m$, $T_{1}\in\SSTab_{[m]}(\mu)$ and
$T_{2}\in\SSTab_{[m+1,n]}(\lambda/\mu)$, we write $T_{1}\sqcup T_{2}$
for the semistandard tableau obtained by adjoining the numbers-in-boxes
of $T_{2}$ to those of $T_{1}$. It is easy to see this is always
a valid semistandard tableau of shape $\lambda$.

Given $T\in\SSTab_{[k,m]}(\lambda/\mu)$, the \emph{weight }of $T$
is the function
\[
\omega_{T}:[k,m]\to\N_{0},
\]
where $\omega_{T}(j)$ is the number of occurrences of $j$ in $T$.

Given $T\in\SSTab_{[n]}(\lambda)$, $\lambda$ a YD, and $m\in\N_{0}$
with $m\leq n$, we write $T\lvert_{>m}$ for the skew semistandard
tableaux formed by the numbers-in-boxes of $T$ with numbers $>m$,
and similarly write $T\lvert_{\leq m}$ for the numbers-in-boxes of
$T$ with numbers $\leq m$. 

\subsection{General representation theory\label{subsec:General-representation-theory}}

Here we clarify the language of representation theory that will appear
throughout the paper.

A unitary representation of a compact Lie group\footnote{Note that compact Lie groups include all finite groups.}
$G$ consists of a Hilbert space\footnote{In this paper, all Hilbert spaces appearing are finite dimensional.}
$\H$ and a homomorphism $\rho_{1},G\to\U(\H)$ where $\U(\H)$ is
the group of unitary operators on $\H$. Any compact Lie group has
a trivial unitary representation $(\triv_{G},\C)$ with inner product
on $\C$ given by $\langle z_{1},z_{2}\rangle=z_{1}\overline{z_{2}}$
and $\triv_{G}(g)=1$ for all $g\in G$. 

If $(\rho_{1},\H_{1})$ and $(\rho_{2},\H_{2})$ are two finite-dimensional
representations of a compact Lie group $G$, an \emph{intertwiner}
between $(\rho_{1},\H_{1})$ and $(\rho_{2},\H_{2})$ is a \uline{linear}
map $I:\H_{1}\to\H_{2}$ such that for all $g\in G$, $I\rho_{1}(g)=\rho_{2}(g)I$.
If there is an invertible intertwiner between $(\rho_{1},\H_{1})$
and $(\rho_{2},\H_{2})$ we say $(\rho_{1},\H_{1})$ and $(\rho_{2},\H_{2})$
are \emph{linearly isomorphic. }If there is an invertible isometric
intertwiner between the two we say they are unitarily equivalent or
isomorphic as unitary representations.

In many cases, the maps $\rho_{1}$ and $\rho_{2}$ will be tacitly
inferred from the group $G$ and the spaces $\H_{1}$ and $\H_{2}$.
When this is the case, we write
\[
\Hom_{G}(\H_{1},\H_{2})
\]
 for the vector space of linear intertwiners between $(\rho_{1},\H_{1})$
and $(\rho_{2},\H_{2})$. 

For $(\rho,W)$ a unitary representation of a compact Lie group $G$,
and $H\le G$ a compact Lie subgroup with unitary representation $(\pi,V)$,
we define the \emph{$(\pi,V)$-isotypic subspace of $W$ for the subgroup
$H$} to be the subspace spanned by the images of all elements of
\[
\Hom_{H}(V,W)\eqdf\Hom_{H}(V,\Res_{H}^{G}W)
\]
where $\Res_{H}^{G}W$ is the \emph{restriction} of $\rho$ to $H$.
Any such isotypic space is itself a unitary sub-representation of
$\Res_{H}^{G}W$ (for the group $H$). If $V$ is an irreducible representation
of $H$, the dimension of $\Hom_{H}(V,W)$ is the \emph{multiplicity
}with which $V$ appears in $\Res_{H}^{G}W$.

Let $(\pi,W)$, $(\pi_{1},W_{1})$ and $(\pi_{2},W_{2})$ be finite-dimensional
unitary representations of a compact Lie group. We explain some basic
constructions.

There is a dual unitary representation on the space of complex linear
functionals on $W$ with inner product induced by that on $W$; this
representation is also irreducible if $(\pi,W)$ is. The action of
$g$ on a linear functional $\varphi$ on $W$ is by $g[\varphi](w)=\varphi(g^{-1}w)$.
If $w\in W$ we write $\check{w}\in\check{W}$ for the linear functional
\[
\check{w}:v\mapsto\langle v,w\rangle.
\]
We denote the dual representation by $(\check{\pi},\check{W})$ or
simply $\check{W}$. Given two finite dimensional unitary representations
$(\pi_{1},W_{1})$ and $(\pi_{2},W_{2})$ of a compact Lie group $G$,
the tensor product $W_{1}\otimes W_{2}\eqdf W_{1}\otimes_{\C}W_{2}$
has an action of $G$ that is `diagonal' by $\pi_{1}$ on the first
factor and $\pi_{2}$ on the second factor. The tensor product inherits
a Hermitian inner product from that on $W_{1}$ and $W_{2}$ where
$\langle w_{1}\otimes w_{2},w'_{1}\otimes w'_{2}\rangle\eqdf\langle w_{1},w'_{1}\rangle\langle w_{2},w'_{2}\rangle$
that makes $W_{1}\otimes W_{2}$ a unitary representation of $G$
under the diagonal action. This extends to tensor powers of $W$;
any $W^{\otimes k}$ for $k\in\N$ is in this way a unitary representation
of $G$ under the diagonal action.

There is a canonical isomorphism 
\begin{equation}
W_{1}\otimes\check{W_{2}}\cong\Hom(W_{2},W_{1})\label{eq:hom}
\end{equation}
of linear representations, where the right hand side is linear maps
from $W_{2}$ to $W_{1}$, where $G$ acts `diagonally' on the left
hand side, and by conjugation ($g:A\mapsto\pi_{1}(g)A\pi_{2}(g)^{-1}$)
on the right hand side. If $W_{1}',W_{2}'$ are subrepresentations
of $W_{1},W_{2}$ then we view $\Hom(W'_{2},W'_{1})$ as a subrepresentation
of $\Hom(W_{2},W_{1})$ via (\ref{eq:hom}). This corresponds to extending
linear maps by 0 on the orthogonal complement of $W'_{2}$ in $W_{2}$.

In the case $W_{1}=W_{2}=W$ (\ref{eq:hom}) is moreover an isomorphism
of unitary representations $W\otimes\check{W}\cong\End(W)$ if $\End(W)$
is given the Hilbert-Schmidt inner product $\langle A,B\rangle\eqdf\tr(AB^{*})$,
$B^{*}$ standing for the Hermitian transpose of $B$. 

\subsection{Representation theory of symmetric groups\label{subsec:Representation-theory-of-sym-groups}}

Although the problems of this paper are not initially posed in a way
that involves symmetric groups, the representation theory of unitary
groups is intimately connected via Schur-Weyl duality (see $\S$\ref{subsec:Representation-theory-of-U(n)-SU(n)})
to the representation theory of symmetric groups, so it plays a large
part in this paper.

We write $S_{k}$ for the symmetric group of permutations of the set
$[k]$, and $\C[S_{k}]$ for the group algebra of $S_{k}$. As a technicality,
the group $S_{0}$ is the group with one element.

The equivalence classes of irreducible representations of $S_{k}$
are in one-to-one correspondence with YDs $\lambda\vdash k$ \cite[\S 4.2]{FH}.
The irreducible unitary representation of $S_{k}$ corresponding to
$\lambda\vdash k$ will be denoted by $(\pi_{\lambda},V^{\lambda})$
and simply referred to as $V^{\lambda}$. We write $\chi_{\lambda}$
for the character of $V^{\lambda}$, i.e.
\[
\chi_{\lambda}(\sigma)\eqdf\tr(\pi_{\lambda}(\sigma)),\quad\sigma\in S_{k}.
\]
All characters of irreducible representations of symmetric groups
are real-valued (e.g. by \cite[Frobenius Formula 4.10]{FH}).

The dimension of $V^{\lambda}$ is given by the Frame-Robinson-Thrall
hook-length formula as follows. The \emph{hook }of a box $\square$
in $\lambda\vdash k$ is the collection of boxes either to the right
of, or below, $\lambda$, including the box itself. We write $h_{\lambda}(\square)$
for the number of boxes in the hook of $\square$. The hook-length
formula \cite{FRT} states
\begin{equation}
d_{\lambda}\eqdf\dim V^{\lambda}=\frac{k!}{\prod_{\square\in\lambda}h_{\lambda}(\square)}.\label{eq:hook-length-formula}
\end{equation}

Before proceeding we fix some notation. If we refer to $S_{\ell}\leq S_{k}$
with $\ell\leq k$ we always view $S_{\ell}$ as the subgroup of permutations
that fix every element of $[\ell+1,k]$. As a consequence we obtain
fixed inclusions $\C[S_{\ell}]\subset\C[S_{k}]$ for $\ell$ and $k$
as above. When we write $S_{\ell}\times S_{k-\ell}\leq S_{k}$, the
first factor is $S_{\ell}$ as defined above and the second factor
is $S'_{k-\ell}$ which is our notation for the subgroup of permutations
that fix every element of $[\ell]$.

Given $\lambda\vdash\ell$, the element
\[
\p_{\lambda}\eqdf\frac{d_{\lambda}}{\ell!}\sum_{\sigma\in S_{\ell}}\chi_{\lambda}(\sigma)\sigma\in\C[S_{\ell}]
\]
 is a central idempotent in $\C[S_{\ell}]$ with the following important
property. If $(\pi,V)$ is any unitary irreducible representation
of $S_{k}$ with $k\geq\ell$, then by linear extension $\pi:\C[S_{k}]\to\End(V)$.
Under the fixed inclusion $\C[S_{\ell}]\subset\C[S_{k}]$, $\pi(\p_{\lambda})$
is the orthogonal projection onto the $V^{\lambda}$-isotypic subspace
of $V$ for the subgroup $S_{k}$.

Suppose that $\ell\leq k$, $\ell,k\in\N_{0}$, with $\lambda\vdash k$
and $\mu\vdash\ell$. We write 
\[
d_{\lambda/\mu}\eqdf\dim\Hom_{S_{\ell}}(V^{\mu},V^{\lambda}).
\]
The branching rules for $S_{k}$ imply that $d_{\lambda/\mu}=0$ unless
$\mu\subset\lambda$. Applying Frobenius reciprocity to the pair $(V^{\mu},V^{\lambda})$
then taking the dimension of the induced representation $\Ind_{S_{\ell}}^{S_{k}}V^{\mu}$
gives the formula, for $\mu\vdash\ell$ fixed and $b\in\N_{0}$,
\begin{equation}
\sum_{\mu\subset_{b}\lambda}d_{\lambda/\mu}d_{\lambda}=\frac{(\ell+b)!}{\ell!}d_{\mu}.\label{eq:induced-rep-formula}
\end{equation}

Suppose that $\ell_{1},\ell_{2}\in\N_{0}$ and $\ell_{1}+\ell_{2}=k$.
The irreducible representations of $S_{\ell_{1}}\times S_{\ell_{2}}$
are of the form $V^{\mu_{1}}\otimes V^{\mu_{2}}$ with $\mu_{i}\vdash\ell_{i}$
for $i=1,2$; $S_{\ell_{1}}$ acts on the first factor and $S'_{\ell_{2}}$
acts on the second factor. The numbers
\[
\LR_{\mu_{1},\mu_{2}}^{\lambda}\eqdf\dim\Hom_{S_{\ell_{1}}\times S_{\ell_{2}}}(V^{\mu_{1}}\otimes V^{\mu_{2}},V^{\lambda})\in\N_{0}
\]
are called \emph{Littlewood-Richardson coefficients.} They are notoriously
difficult to work with, but thankfully, in this paper the most detail
we need about them is the following:
\begin{lem}[Recast of Pieri's formula]
\label{lem:pieri-formula}Suppose that $\ell_{1},\ell_{2}\in\N_{0}$
and $\ell_{1}+\ell_{2}=k$. Let $\mu\vdash\ell_{1}$ and $\lambda\vdash k$
with $\mu\subset^{1}\lambda$. We have 
\[
\LR_{\mu,(\ell_{2})}^{\lambda}=\dim\Hom_{S_{\ell_{1}}\times S_{\ell_{2}}}(V^{\mu}\otimes\triv_{S_{\ell_{2}}},V^{\lambda})=1.
\]
\end{lem}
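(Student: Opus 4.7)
The plan is to recognize the statement as a direct instance of Pieri's rule for the symmetric group after identifying the trivial representation $\triv_{S_{\ell_2}}$ with the one-row Specht module $V^{(\ell_2)}$.

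First I would verify the first equality in the statement, namely $V^{(\ell_2)} \cong \triv_{S_{\ell_2}}$. By the hook-length formula (\ref{eq:hook-length-formula}) the dimension of $V^{(\ell_2)}$ is $1$, and the unique one-dimensional representation corresponding to a non-empty one-row shape is the trivial one (the sign representation corresponds instead to the one-column shape $(1^{\ell_2})$). This is standard and can be lifted straight from \cite{FH}. With this identification the first equality in the lemma is a tautology.

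Second, I would rewrite the Hom-space via Frobenius reciprocity as the multiplicity of $V^\lambda$ in $\Ind_{S_{\ell_1}\times S_{\ell_2}}^{S_k}(V^\mu \otimes \triv)$, and then invoke Pieri's rule: this multiplicity equals $1$ if $\lambda/\mu$ is a horizontal strip of size $\ell_2$ (i.e.\ $|\lambda/\mu|=\ell_2$ with no two boxes of $\lambda/\mu$ in the same column) and $0$ otherwise. The hypothesis $\mu\subset^1\lambda$ together with $|\lambda|-|\mu|=k-\ell_1=\ell_2$ is exactly the horizontal-strip condition, so the multiplicity is $1$.

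If a self-contained argument is preferred rather than citing Pieri, I would derive the conclusion from the Littlewood--Richardson rule: $\LR^{\lambda}_{\mu,\nu}$ counts LR skew tableaux of shape $\lambda/\mu$ with content $\nu$. For $\nu=(\ell_2)$ every entry must equal $1$, so the SSYT condition forces strict column increase with constant entries, i.e.\ no column can contain two boxes of $\lambda/\mu$; equivalently $\mu\subset^1\lambda$. The lattice/reverse-reading-word condition defining LR tableaux is trivially satisfied when only the letter $1$ appears. Thus under the hypotheses there is exactly one such tableau, giving $\LR^\lambda_{\mu,(\ell_2)}=1$. The only real content — and the only place where some nontrivial input from symmetric group representation theory is used — is Pieri's rule (or equivalently the LR rule in this special case); everything else is a routine unpacking of definitions.
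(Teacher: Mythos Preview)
Your proof is correct and follows essentially the same approach as the paper's: both reduce the claim to Pieri's formula after a routine identification. The only cosmetic difference is that the paper passes through the Schur-polynomial interpretation of the Littlewood--Richardson coefficient (identifying $\LR_{\mu,(\ell_2)}^{\lambda}$ with the coefficient of $s_\lambda$ in $s_\mu s_{(\ell_2)}$ and then applying Pieri for Schur polynomials), whereas you stay on the symmetric-group side via Frobenius reciprocity; your alternative direct LR-tableau count is a nice self-contained bonus.
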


\begin{proof}
This is a standard fact but normally presented slightly differently.
To obtain this version, one needs to know that $\LR_{\mu,(\ell_{2})}^{\lambda}$
is also the coefficient of the Schur polynomial $s_{\mu}$ in the
expansion of $s_{\lambda}s_{(\ell_{2})}$ in Schur polynomials (see
\cite[eq. (A.8), Ex. 4.43]{FH} for the equivalence of these definitions
of Littlewood-Richardson coefficients). Then the lemma follows from
the version of Pieri's formula for $s_{\lambda}s_{(\ell_{2})}$ given
in \cite[eq. (A.7)]{FH}.
\end{proof}

\subsection{Representation theory of $\protect\U(n)$ and $\protect\SU(n)$ \label{subsec:Representation-theory-of-U(n)-SU(n)}}

Here we give a brief account of the representation theory of $\U(n)$
and $\SU(n)$. The details as well as more background can be found
in \cite{FH}. 

The equivalence classes of irreducible representations of $\U(n)$
are in one-to-one correspondence with their highest weights by the
theorem of the highest weight. These highest weights are given by
linear combinations

\[
\lambda_{1}\hat{\omega}_{1}+\cdots+\lambda_{n}\hat{\omega_{n}}
\]
where $(\lambda_{1},\ldots,\lambda_{n})\in\Z^{n}$ is a non-increasing
sequence of integers and $\hat{\omega}_{1},\ldots,\hat{\omega}_{n}$
is a system of fundamental weights for $\U(n)$. The sequence $(\lambda_{1},\ldots,\lambda_{n})$
is called the \emph{signature} of the representation. If all $\lambda_{i}\in\Z_{\geq0}$
then the signature corresponds to a Young diagram $\lambda$ with
$\ell(\lambda)\leq n$, and conversely, any YD $\lambda$ with $\ell(\lambda)\leq n$
gives rise to an equivalence class of irreducible representation of
$\U(n)$ denoted by $(\tilde{\rho}_{n}^{\lambda},W_{n}^{\lambda})$.

Every irreducible representation of $\U(n)$ restricts to an irreducible
representation of $\SU(n)$, and every irreducible representation
of $\SU(n)$ arises by restriction from $\U(n)$. Two irreducible
representations of $\U(n)$ restrict to equivalent representations
of $\SU(n)$ if and only if their signatures differ by an integer
multiple of $(1,1,\ldots,1)$. This immediately shows that the equivalence
classes of irreducible representations of $\SU(n)$ are in one to
one correspondence with YDs $\lambda$ such that $\ell(\lambda)\leq n-1$,
since there is a unique signature $(\lambda_{1},\ldots,\lambda_{n})$
whose representation restricts to the given one of $\SU(n)$ with
$\lambda_{n}=0$ and this corresponds to a YD by deleting trailing
zeros. For such a $\lambda,$ we write $(\rho_{n}^{\lambda},W_{n}^{\lambda})$
for the restriction of this representation of $\SU(n)$. 

\emph{To be less verbose, in the rest of the paper we will tend to
refer to representations simply by the vector space whenever the module
structure can be inferred.}

The trace of $g\in\U(n)$ on $W_{n}^{\lambda}$ is given by $s_{\lambda}(e(\theta_{1}),\ldots,e(\theta_{n}))$
where $s_{\lambda}$ is the \emph{Schur polynomial} associated to
the YD $\lambda$ and $e(\theta_{1}),\ldots,e(\theta_{n})$ are the
eigenvalues of $g$. As such, the dimension of $W_{n}^{\lambda}$
is given by the specialization of the Schur polynomial, letting $1^{n}\eqdf(\underbrace{1,1,\ldots,1}_{n})$
\begin{equation}
D_{\lambda}(n)\eqdf\dim W_{n}^{\lambda}=s_{\lambda}(1^{n}).\label{eq:dimension-given-by-semistandard-tableaux}
\end{equation}
There is a formula for $s_{\lambda}(1^{n})$ due to Stanley called
the \emph{hook-content }formula. Given a YD $\lambda$, for any box
$\square$ of $\lambda$, we define the \emph{content} of the box
to be
\[
c(\square)\eqdf j(\square)-i(\square)
\]
where $i(\square)$ is the row number (starting at 1, counting from
top to bottom) of the box and $j(\square)$ is the column number (starting
at 1, counting from left to right). Recall the quantities $h_{\lambda}(\square)$
from $\S\S$\ref{subsec:Representation-theory-of-sym-groups}. The
hook-content formula \cite[Cor. 7.21.4]{StanleyEC2} says
\begin{equation}
D_{\lambda}(n)=s_{\lambda}(1^{n})=\frac{\prod_{\square\in\lambda}(n+c(\square))}{\prod_{\square\in\lambda}h_{\lambda}(\square)}.\label{eq:hook-content-formula}
\end{equation}
By a slight abuse of notation, if $g$ is any element of $\U(n)$,
we also write $s_{\lambda}(g)$ for $\tr_{W_{n}^{\lambda}}(\tilde{\rho}_{\lambda}(g))$;
strictly speaking this is the Schur polynomial evaluated at the eigenvalues
of $g$. The hook-content formula (\ref{eq:hook-content-formula})
implies that for a fixed YD $\lambda$, $D_{\lambda}(n)$ is a polynomial
function of $n$ with coefficients in $\Q$.

There is an important formula expressing $s_{\lambda}(g)$ in terms
of power sum symmetric functions. Given a partition $\lambda=(\lambda_{1},\ldots,\lambda_{r})$,
and $g\in\U(n)$, we define
\[
p_{\lambda}(g)\eqdf\tr(g^{\lambda_{1}})\tr(g^{\lambda_{2}})\cdots\tr(g^{\lambda_{r}}).
\]
 Given $\lambda\vdash k$, the change of base formula is \cite[Ex. A.29]{FH}
\begin{equation}
s_{\lambda}(g)=\frac{1}{k!}\sum_{\pi\in S_{k}}\chi_{\lambda}(\pi)p_{\mu(\pi)}(g)\label{eq:base-change-formula}
\end{equation}
where $\mu(\pi)\vdash k$ is the partition given by the cycle type
of $\pi\in S_{k}$.

Besides the \emph{polynomial }families of representations of $\U(n)$,
as $n$ varies, obtained by fixing $\lambda$ and varying $n$, there
are similar \emph{rational }families that we explain now\emph{. }Let
$\mu\vdash k$ and $\nu\vdash\ell$ be fixed Young diagrams. Then
for every $n\geq\ell(\mu)+\ell(\nu)$, there is an irreducible unitary
representation of $\U(n)$ with signature
\[
(\mu_{1},\mu_{2},\ldots,\mu_{\ell(\mu)},\underbrace{0,\ldots,0}_{n-\ell(\mu)-\ell(\nu)},-\nu_{\ell(\nu)},-\nu_{\ell(\nu)-1},\ldots,-\nu_{1}).
\]
We write $W_{n}^{[\mu,\nu]}$ for this representation of $\U(n)$,
$D_{[\mu,\nu]}(n)$ for its dimension, and $s_{[\mu,\nu]}(g)$ for
the character of this representation at $g\in\U(n)$. The representations
$W_{n}^{[\mu,\nu]}$ directly generalize the case of $W_{n}^{\lambda}$
with $\lambda$ fixed by taking $\mu=\lambda$ and $\nu$ to be the
empty Young diagram. The character $s_{[\lambda,\mu]}(g)$ can be
written in terms of Schur polynomials by a result of Koike.
\begin{thm}[{\cite[eq. (0.2)]{Koike}}]
\label{thm:rational-char-to-schur}For $\mu\vdash k$ and $\nu\vdash\ell$,
$n\geq\ell(\mu)+\ell(\nu)$, $g\in\U(n)$, we have
\[
s_{[\mu,\nu]}(g)=\sum_{\substack{p_{1},p_{2},p_{3}\in\N_{0}\\
p_{1}+p_{2}=k,\quad p_{1}+p_{3}=\ell
}
}\sum_{\substack{\nu_{1}\vdash p_{1},\nu_{2}\vdash p_{2},\nu_{3}\vdash p_{3}\\
\nu_{2}\subset\mu,\nu_{3}\subset\nu
}
}\LR_{\nu_{1},\nu_{2}}^{\mu}\LR_{\check{\nu_{1}},\nu_{3}}^{\nu}s_{\nu_{2}}(g)s_{\nu_{3}}(g^{-1})
\]
where $\check{\nu_{1}}$ denotes the transposed YD of $\nu_{1}$ (obtained
from $\nu_{1}$ by switching rows and columns).
\end{thm}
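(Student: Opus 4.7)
This statement is quoted from Koike \cite{Koike}, so the natural plan is to reproduce his original argument, which proceeds by first lifting the identity to a universal ring of rational symmetric functions in two alphabets and then specializing to $\U(n)$.

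The first step is to introduce a universal character $s_{[\mu,\nu]}(x,y)$ as a symmetric function in two sets of variables $x = (x_1, x_2, \ldots)$ and $y = (y_1, y_2, \ldots)$ that recovers $s_{[\mu,\nu]}(g)$ upon the specialization $y_i = x_i^{-1}$, $x_i = e(\theta_i)$ for $i \leq n$ (and all remaining variables set to zero), where the $e(\theta_i)$ are the eigenvalues of $g \in \U(n)$. This is obtained from the Weyl character/bialternant formula applied to the signature $(\mu_1, \ldots, \mu_{\ell(\mu)}, 0, \ldots, 0, -\nu_{\ell(\nu)}, \ldots, -\nu_1)$ of $W_n^{[\mu,\nu]}$.

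The second and crucial step is to establish the identity at the level of universal characters. The plan is to expand the bialternant expression via Jacobi--Trudi manipulations together with the Cauchy identity
\[
\prod_{i,j}(1 - x_i y_j)^{-1} = \sum_\lambda s_\lambda(x) s_\lambda(y)
\]
and its dual version $\prod_{i,j}(1 + x_i y_j) = \sum_\lambda s_\lambda(x) s_{\check{\lambda}}(y)$, thereby rewriting $s_{[\mu,\nu]}(x,y)$ as a sum of products $s_{\nu_2}(x) s_{\nu_3}(y)$. The Littlewood--Richardson rule is then applied twice, once to factor $\mu$ through an intermediate diagram $\nu_1$ with complement $\nu_2$, and once to factor $\nu$ through $\check{\nu_1}$ with complement $\nu_3$, which produces the paired coefficients $\LR^{\mu}_{\nu_1,\nu_2} \LR^{\nu}_{\check{\nu_1},\nu_3}$. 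The transpose $\check{\nu_1}$ enters precisely because this is what the dual Cauchy identity contributes when bridging the $x$- and $y$-alphabets.

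The final step is to specialize $y_i = x_i^{-1}$ with $x_i$ ranging over the eigenvalues of $g$. The hypothesis $n \geq \ell(\mu) + \ell(\nu)$ ensures that $W_n^{[\mu,\nu]}$ is well-defined and irreducible and guarantees that no truncation corrections appear in this stable range. The main obstacle is the second step: carefully tracking how the two bialternant expansions combine through the Cauchy identities to produce exactly the paired Littlewood--Richardson coefficients with the transpose on one side. This combinatorial bookkeeping is the technical heart of Koike's theorem and the point where one would have to work hardest to verify the precise form of the expansion.
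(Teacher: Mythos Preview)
The paper does not prove this theorem at all: it is stated with a citation to Koike \cite[eq.~(0.2)]{Koike} and then immediately used. Your plan to reproduce Koike's original argument is therefore the appropriate one, and your outline of the universal-character approach with Cauchy identities and Littlewood--Richardson expansions is a fair high-level sketch of that proof.
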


Inspection of Theorem \ref{thm:rational-char-to-schur} shows that
$D_{[\mu,\nu]}(n)=s_{[\mu,\nu]}(1^{n})$ is a finite linear combination,
with integer coefficents, of 
\begin{equation}
s_{\nu_{2}}(1^{n})s_{\nu_{3}}(1^{n}),\quad\nu_{2}\vdash p_{2}\leq k,\nu_{3}\vdash p_{3}\leq\ell.\label{eq:schur-function-product}
\end{equation}
The linear combination itself does not depend on $n$, and by the
hook-content formula (\ref{eq:hook-content-formula}), each of the
terms in (\ref{eq:schur-function-product}) is a polynomial function
of $n$ with coefficients in $\Q$ for $n\geq\ell(\mu)+\ell(\nu)$.
Moreover, the term in (\ref{eq:schur-function-product}) is $\asymp n^{p_{2}+p_{3}}$
so the contribution to $s_{[\lambda,\mu]}(1^{n})$ of maximal growth
corresponds to $p_{2}=k$ and $p_{3}=\ell$ hence is a unique term
with non-zero (in fact, unity) coefficient. These arguments yield
the following corollary to Theorem \ref{thm:rational-char-to-schur}.
\begin{cor}
\label{cor:dim-of-rational-representation}Let $\mu\vdash k$ and
$\nu\vdash\ell$. For $n\geq\ell(\mu)+\ell(\nu)$, $D_{[\mu,\nu]}(n)$
is given by a polynomial function of $n$ with coefficients in $\Q$
and 
\[
D_{[\mu,\nu]}(n)\asymp n^{k+\ell}
\]
as $n\to\infty$.
\end{cor}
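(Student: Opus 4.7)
The plan is to combine Theorem \ref{thm:rational-char-to-schur} with the hook-content formula (\ref{eq:hook-content-formula}). First I would specialise Koike's formula at $g = 1^{n}$: since the Littlewood-Richardson coefficients $\LR_{\nu_{1},\nu_{2}}^{\mu}$ and $\LR_{\check{\nu_{1}},\nu_{3}}^{\nu}$ and the indexing sets of $(p_{1},p_{2},p_{3})$ and $(\nu_{1},\nu_{2},\nu_{3})$ depend only on $\mu$ and $\nu$, and not on $n$, this writes $D_{[\mu,\nu]}(n) = s_{[\mu,\nu]}(1^{n})$ as a fixed $\Z$-linear combination of products $s_{\nu_{2}}(1^{n})\, s_{\nu_{3}}(1^{n})$ with $|\nu_{2}|=p_{2} \leq k$ and $|\nu_{3}|=p_{3} \leq \ell$.

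Next I would invoke the hook-content formula (\ref{eq:hook-content-formula}), which exhibits $s_{\rho}(1^{n})$ as a polynomial in $n$ of degree exactly $|\rho|$, with rational coefficients and strictly positive leading coefficient $\bigl(\prod_{\square \in \rho} h_{\rho}(\square)\bigr)^{-1}$. Substituting this into the Koike expansion writes $D_{[\mu,\nu]}(n)$ as a finite $\Q$-linear combination of polynomials in $n$, hence itself a polynomial function of $n$ with coefficients in $\Q$ valid for every $n \geq \ell(\mu) + \ell(\nu)$. This already proves the first assertion of the corollary.

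For the asymptotic $D_{[\mu,\nu]}(n) \asymp n^{k+\ell}$, I would isolate the unique term of maximal degree in the Koike sum. A given summand has degree $p_{2} + p_{3} \leq k + \ell$, with equality forcing $p_{2} = k$ and $p_{3} = \ell$, hence $p_{1} = 0$ and $\nu_{1} = \emptyset$; the constraints $\nu_{2} \subset \mu$ with $|\nu_{2}| = |\mu|$ and $\nu_{3} \subset \nu$ with $|\nu_{3}| = |\nu|$ then force $\nu_{2} = \mu$, $\nu_{3} = \nu$. The accompanying factors $\LR_{\emptyset,\mu}^{\mu}$ and $\LR_{\emptyset,\nu}^{\nu}$ both equal $1$ (immediately from the definition, or as a degenerate case of Lemma \ref{lem:pieri-formula}), so the coefficient of $n^{k+\ell}$ in $D_{[\mu,\nu]}(n)$ is the product of the two strictly positive leading coefficients of $s_{\mu}(1^{n})$ and $s_{\nu}(1^{n})$, and is in particular non-zero. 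The main point requiring care is precisely this uniqueness and non-vanishing of the top-degree contribution; once it is verified, all other summands have strictly smaller degree and therefore cannot cancel it, giving $D_{[\mu,\nu]}(n) \asymp n^{k+\ell}$.
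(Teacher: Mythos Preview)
Your proposal is correct and follows essentially the same route as the paper: specialise Koike's formula (Theorem~\ref{thm:rational-char-to-schur}) at $g=1^{n}$, use the hook-content formula to see each $s_{\nu_{2}}(1^{n})s_{\nu_{3}}(1^{n})$ is a rational-coefficient polynomial of degree $p_{2}+p_{3}$, and then isolate the unique top-degree term $p_{1}=0$, $\nu_{2}=\mu$, $\nu_{3}=\nu$ with unit Littlewood--Richardson coefficient. Your write-up is in fact slightly more explicit than the paper's in justifying why that top term has non-zero (indeed positive) leading coefficient.
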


Corollary \ref{cor:dim-of-rational-representation} shows that in
any organization of the irreducible representations of $\U(n)$ or
$\SU(n)$ into families of `small' or `large' dimensional representations,
the representations $W_{n}^{[\mu,\nu]}$ with $\mu,\nu$ fixed should
be considered alongside the representations $W_{n}^{\lambda}$ with
$\lambda$ fixed.

\subsubsection*{Schur-Weyl duality}

We always understand that $\C^{n}$ has the standard Hermitian inner
product and $\U(n)$ acts unitarily on $\C^{n}$ in its defining representation.
This representation coincides with $(\tilde{\rho}_{\lambda},W_{n}^{\lambda})$
where $\lambda$ consists of one box in the formalism of $\S$\ref{subsec:Representation-theory-of-U(n)-SU(n)}.
We write $\{e_{1},\ldots,e_{n}\}$ for the standard basis of $\C^{n}$.The
tensor power $(\C^{n})^{\otimes k}\eqdf\underbrace{\C^{n}\otimes\cdots\otimes\C^{n}}_{k}$
has an induced inner product that makes it into a unitary representation
of $\U(n)$, where $\U(n)$ acts diagonally. The space $(\C^{n})^{\otimes k}$
is also a unitary representation of $S_{k}$ where $S_{k}$ acts by
permuting indices. The actions of $\U(n)$ and $S_{k}$ commute and
hence $(\C^{n})^{\otimes k}$ is a unitary representation of $\U(n)\times S_{k}$.
We write $\pi_{n}^{k}:\U(n)\to\End((\C^{n})^{\otimes k})$ for the
diagonal action of $\U(n)$ and $\rho_{n}^{k}:S_{k}\to\End((\C^{n})^{\otimes k})$
for the action that permutes coordinates. Schur-Weyl duality gives
the following full description of the decomposition of $(\C^{n})^{\otimes k}$
into irreducible representations of $\U(n)\times S_{k}$, which are
a priori tensor products $W\otimes V$ where $W$ (resp. $V$) is
an irreducible representation of $\U(n)$ (resp. $S_{k}$).
\begin{prop}[{Schur-Weyl duality \cite[Chapt. IV]{Weyl}}]
\label{prop:Schur-weyl-duality}There is an isomorphism $\FF$ of
linear representations of $\U(n)\times S_{k}$
\[
\FF:(\C^{n})^{\otimes k}\xrightarrow{\sim}\bigoplus_{\substack{\lambda\vdash k\\
\ell(\lambda)\leq n
}
}W_{n}^{\lambda}\otimes V^{\lambda}.
\]
\end{prop}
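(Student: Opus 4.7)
The plan is to prove the decomposition via the double commutant (Schur--Weyl) theorem applied to the two commuting algebras $A,B\subseteq\End((\C^{n})^{\otimes k})$, where $A$ is generated by $\pi_{n}^{k}(\U(n))$ and $B$ is generated by $\rho_{n}^{k}(S_{k})$. That the two actions commute is immediate from their definitions, so the crux is to prove that $A$ and $B$ are each other's commutants. Both algebras are semisimple --- $B$ by Maschke's theorem and $A$ because every finite-dimensional representation of the compact group $\U(n)$ is completely reducible --- so once $A=B'$ and $B=A'$ are established, the standard structure theorem for commuting semisimple actions (Proposition \ref{prop:Schur-weyl-duality} with multiplicity one on each side) will give the decomposition $(\C^{n})^{\otimes k}\cong\bigoplus_{\lambda}W^{\lambda}\otimes V^{\lambda}$ for some index set of pairs of irreducibles.

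The main analytic step is proving $A=B'$. The inclusion $A\subseteq B'$ follows from commutativity. For the converse, use the natural isomorphism $\End((\C^{n})^{\otimes k})\cong(\End\C^{n})^{\otimes k}$, under which the conjugation action of $S_{k}$ coming from $\rho_{n}^{k}$ becomes the permutation action on tensor factors. Hence $B'$ is identified with the symmetric tensors $\mathrm{Sym}^{k}(\End\C^{n})$. By polarization in characteristic zero, $\mathrm{Sym}^{k}(\End\C^{n})$ is linearly spanned by the pure tensors $X^{\otimes k}$ as $X$ ranges over $\End\C^{n}$. It thus suffices to show
\[
\mathrm{span}_{\C}\{X^{\otimes k}:X\in\End\C^{n}\}=\mathrm{span}_{\C}\{g^{\otimes k}:g\in\U(n)\}.
\]
If a linear functional $L$ on $(\End\C^{n})^{\otimes k}$ vanishes on the right-hand span, then the complex polynomial $X\mapsto L(X^{\otimes k})$ vanishes on $\U(n)\subset M_{n}(\C)=\End\C^{n}$. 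The main obstacle, and the place Weyl's unitary trick enters, is that $\U(n)$ is only a \emph{real} submanifold of the complex variety $M_{n}(\C)$; however, it is a compact real form of the complex reductive algebraic group $\GL(n,\C)$, hence Zariski-dense in $\GL(n,\C)$, which is itself Zariski-dense in $M_{n}(\C)$. The polynomial therefore vanishes identically, so $L$ annihilates every $X^{\otimes k}$ and hence all of $\mathrm{Sym}^{k}(\End\C^{n})$. This gives $A=B'$, and taking commutants again with $B$ already equal to its own bicommutant yields $B=A'$.

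With both equalities of commutants in hand, the double commutant theorem produces a canonical decomposition $(\C^{n})^{\otimes k}\cong\bigoplus_{\lambda\in I}M_{\lambda}\otimes N_{\lambda}$, the sum being over a finite index set $I$ indexing the $(A,B)$-isotypic components, with each $M_{\lambda}$ irreducible as an $A$-module and each $N_{\lambda}$ irreducible as a $B$-module. It remains to identify $I$, the $N_{\lambda}$, and the $M_{\lambda}$. Since $N_{\lambda}$ is an irreducible $S_{k}$-module, it is isomorphic to $V^{\lambda}$ for some $\lambda\vdash k$, giving a natural identification of $I$ with a subset of the set of Young diagrams with $k$ boxes. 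For a given $\lambda\vdash k$, one exhibits nonvanishing of the $V^{\lambda}$-isotypic subspace of $(\C^{n})^{\otimes k}$ iff $\ell(\lambda)\leq n$, for instance by applying a Young symmetrizer associated to $\lambda$ to the basis tensor $e_{i_{1}}\otimes\cdots\otimes e_{i_{k}}$ whose indices realize the column-strict, row-weakly-increasing pattern of $\lambda$; this fails (and only fails) when $\ell(\lambda)>n$ because any column would force $n+1$ distinct basis indices in $\{e_{1},\ldots,e_{n}\}$. Finally, one checks that the $\U(n)$-module $M_{\lambda}$ is the irreducible with highest weight $\lambda$, i.e.\ $W_{n}^{\lambda}$: the Young-symmetrizer vector constructed above is a highest weight vector of the correct weight with respect to the standard Borel of $\U(n)$, and irreducibility of $M_{\lambda}$ from the double commutant theorem then forces $M_{\lambda}\cong W_{n}^{\lambda}$ by the theorem of the highest weight.
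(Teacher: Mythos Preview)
The paper does not prove this proposition at all: it is stated as a classical fact with a citation to Weyl \cite[Chapt.~IV]{Weyl} and then used freely thereafter. So there is no ``paper's own proof'' to compare against.

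Your argument is the standard double-commutant proof and is correct. The key step --- showing that the algebra generated by $\pi_{n}^{k}(\U(n))$ coincides with the full commutant of the $S_{k}$-action via the identification $B'=\mathrm{Sym}^{k}(\End\C^{n})$, polarization, and Zariski density of $\U(n)$ in $M_{n}(\C)$ --- is exactly Weyl's original strategy. The identification of the index set with $\{\lambda\vdash k:\ell(\lambda)\leq n\}$ via Young symmetrizers and of the $\U(n)$-factor with $W_{n}^{\lambda}$ via highest weight is also standard and correctly sketched. One small remark: when you write ``$B$ already equal to its own bicommutant'', the cleanest justification in this finite-dimensional setting is that $B=\rho_{n}^{k}(\C[S_{k}])$ is a $*$-subalgebra of $\End((\C^{n})^{\otimes k})$ (permutation operators are unitary), hence $B''=B$; your appeal to semisimplicity is also fine but slightly less direct.
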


\subsubsection*{Branching rules}

For any $r\ge0$ we view $\U(n-r)$ as the subgroup of $\U(n)$ of
elements that fix pointwise the standard basis elements $e_{n-r+1},\ldots,e_{n}$
of $\C^{n}$ in the defining representation of $\U(n)$. We have \cite[Ex. 6.12]{FH}
for $\ell(\lambda)\leq n$ 
\begin{equation}
\Res_{\U(n-1)}^{\U(n)}W_{n}^{\lambda}\cong\bigoplus_{\substack{\mu\,:\,\mu\subset^{1}\lambda\\
\ell(\mu)\leq n-1
}
}W_{n-1}^{\mu}.\label{eq:branching-rule}
\end{equation}
By iterating (\ref{eq:branching-rule}), we obtain a orthogonal direct
sum decomposition 
\begin{align*}
W_{n}^{\lambda} & =\bigoplus_{\substack{\emptyset\subset^{1}\mu_{1}\subset^{1}\ldots\subset^{1}\mu_{n-1}\subset^{1}\lambda\\
\ell(\mu_{i})\leq i
}
}W_{n,(\mu_{1},\ldots,\mu_{n-1})}^{\lambda}
\end{align*}
where
\[
W_{n,(\mu_{1},\ldots,\mu_{n-1})}^{\lambda}\eqdf\bigcap_{i=1}^{n-1}(W_{n}^{\lambda})_{\mu_{i}}
\]
is one-dimensional and $(W_{n}^{\lambda})_{\mu_{i}}$ denotes the
$W_{i}^{\mu_{i}}$-isotypic subspace of $W_{n}^{\lambda}$ for $\U(i)$.
We make the observation that a sequence
\begin{equation}
\emptyset\eqdf\mu_{0}\subset^{1}\mu_{1}\subset^{1}\ldots\subset^{1}\mu_{n-1}\subset^{1}\mu_{n}\eqdf\lambda\label{eq:seuqence-of-young-daigrams}
\end{equation}
gives a semistandard tableau of shape $\lambda$ by filling in the
boxes of $\mu_{i+1}/\mu_{i}$ with the number $i$. This gives a one
to one correspondence $(\mu_{_{1}},\ldots,\mu_{n-1})\mapsto T(\mu_{1},\ldots,\mu_{n-1})$
between such sequences of YDs as in (\ref{eq:seuqence-of-young-daigrams})
and $\SSTab_{[n]}(\lambda)$; for $T=T(\mu_{1},\ldots,\mu_{n-1})\in\SSTab_{[n]}(\lambda)$
we define
\[
W_{n,T}^{\lambda}\eqdf W_{n,(\mu_{1},\ldots,\mu_{n-1})}^{\lambda}
\]
and pick a unit-norm vector $w_{T}$ in each $W_{n,T}^{\lambda}$
(this is unique up to a unit complex number). We do this now once
and for all for all $\lambda$ and $T\in\SSTab_{[n]}(\lambda)$. The
resulting orthonormal basis 
\[
\{\,w_{T}\,:\,T\in\SSTab_{[n]}(\lambda)\,\}
\]
 of $W_{n}^{\lambda}$ is called a \emph{Gelfand-Tsetlin basis. }

Given $m,n\in\N_{0}$ with $m\leq n$, YDs $\lambda,\mu$ with $\mu\subset^{n-m}\lambda$,
$\ell(\mu)\leq m$, $\ell(\lambda)\leq n$, and $R_{1},R_{2}\in\SSTab_{[m+1,n]}(\lambda/\mu)$
we define
\begin{equation}
\EE_{R_{1},R_{2}}^{\lambda,\mu,m}\eqdf\frac{1}{\sqrt{D_{\mu}(m)}}\sum_{T\in\SSTab_{[m]}(\mu)}w_{T\sqcup R_{1}}\otimes\check{w}_{T\sqcup R_{2}}\in\End(W_{n}^{\lambda}).\label{eq:E-def}
\end{equation}

We have use for the following fact, analogous to \cite[Lemma 2.4]{MPasympcover}. 
\begin{lem}
\label{lem:commutant}Let $\lambda$ be a Young diagram with $\ell(\lambda)\leq n$.
Suppose $m\in[n]$. Let $Z(\lambda,m)\subset\End(W_{n}^{\lambda})$
denote the algebra generated by $\U(m)$ acting on $W_{n}^{\lambda}$.
The commutant of $Z(\lambda,m)$ in $\End(W_{n}^{\lambda})$ has an
orthonormal basis given by
\[
\left\{ \EE_{R_{1},R_{2}}^{\lambda,\mu,m}\,:\,\mu\subset^{n-m}\lambda,\,\ell(\mu)\leq m,\,R_{1},R_{2}\in\SSTab_{[m+1,n]}(\lambda/\mu)\right\} .
\]
\end{lem}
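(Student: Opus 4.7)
The plan is to identify the commutant of $Z(\lambda,m)$ in $\End(W_n^\lambda)$ as the matrix-unit algebra of the multiplicity spaces for the restriction $W_n^\lambda|_{\U(m)}$, and then to match this identification with the explicit operators $\EE_{R_1,R_2}^{\lambda,\mu,m}$. Iterating the branching rule \eqref{eq:branching-rule} from $\U(n)$ down through $\U(n-1),\ldots,\U(m)$ produces an orthogonal direct sum decomposition
\[
W_n^\lambda \;=\; \bigoplus_{\substack{\mu\subset^{n-m}\lambda\\ \ell(\mu)\leq m}}\ \bigoplus_{R\in\SSTab_{[m+1,n]}(\lambda/\mu)} V_R^\mu,\qquad V_R^\mu \eqdf \mathrm{span}\{w_{T\sqcup R}: T\in\SSTab_{[m]}(\mu)\}.
\]
Each $V_R^\mu$ is $\U(m)$-invariant and carries an irreducible $\U(m)$-representation isomorphic to $W_m^\mu$, with $\{w_{T\sqcup R}\}_T$ the Gelfand-Tsetlin basis of $V_R^\mu$ adapted to the chain $\U(m)\supset\cdots\supset\U(1)$. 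By Schur's lemma the commutant of $Z(\lambda,m)$ is then isomorphic to $\bigoplus_\mu \End(M_\mu)$, where $M_\mu\eqdf\Hom_{\U(m)}(W_m^\mu,W_n^\lambda)$ has basis indexed by $R$; its total dimension $\sum_\mu|\SSTab_{[m+1,n]}(\lambda/\mu)|^2$ matches the cardinality of the proposed basis.

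The next step is to check that each $\EE_{R_1,R_2}^{\lambda,\mu,m}$ actually lies in this commutant. As an operator it annihilates the orthogonal complement of $V_{R_2}^\mu$ and sends $w_{T\sqcup R_2}\mapsto \tfrac{1}{\sqrt{D_\mu(m)}}\,w_{T\sqcup R_1}$ for every $T\in\SSTab_{[m]}(\mu)$. Since $\{w_{T\sqcup R_1}\}_T$ and $\{w_{T\sqcup R_2}\}_T$ are Gelfand-Tsetlin bases of two copies of $W_m^\mu$ produced by the same iterated branching procedure, the tableau-indexed identification $w_{T\sqcup R_2}\mapsto w_{T\sqcup R_1}$ is a scalar multiple of the unique (up to scalar) $\U(m)$-equivariant isomorphism $V_{R_2}^\mu\to V_{R_1}^\mu$, and so $\EE_{R_1,R_2}^{\lambda,\mu,m}$ commutes with $\U(m)$.

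Orthonormality in the Hilbert-Schmidt inner product on $\End(W_n^\lambda)$ is then a direct computation. Using $(v\otimes\check u)^* = u\otimes\check v$, $\tr(u\otimes\check{u'}) = \langle u, u'\rangle$, and orthonormality of the full Gelfand-Tsetlin basis $\{w_\sigma:\sigma\in\SSTab_{[n]}(\lambda)\}$, one finds
\[
\langle \EE_{R_1,R_2}^{\lambda,\mu,m},\, \EE_{R_1',R_2'}^{\lambda,\mu',m}\rangle_{\HS}
= \frac{1}{\sqrt{D_\mu(m)D_{\mu'}(m)}}\sum_{T,T'}\langle w_{T\sqcup R_1},w_{T'\sqcup R_1'}\rangle\,\langle w_{T'\sqcup R_2'},w_{T\sqcup R_2}\rangle,
\]
which collapses to $\delta_{\mu,\mu'}\delta_{R_1,R_1'}\delta_{R_2,R_2'}$ after invoking $|\SSTab_{[m]}(\mu)|=D_\mu(m)$. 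Together with the dimension count this produces the claimed orthonormal basis.

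The main obstacle is the equivariance argument in the second step: it requires that the phases in the $w_T$ be chosen so that the tableau-by-tableau identifications across different $R$ assemble into genuine $\U(m)$-equivariant maps. This is arranged by fixing once and for all $\U(k-1)$-equivariant isometric embeddings $W_{k-1}^\nu\hookrightarrow W_k^{\nu'}$ for every covering $\nu\subset^1\nu'$ and every $k\leq n$, and constructing the $\{w_T\}$ recursively so that the restriction to each intermediate $V_R^\mu$ automatically yields a Gelfand-Tsetlin basis compatible with these embeddings. With this convention the identification of $\EE_{R_1,R_2}^{\lambda,\mu,m}$ with the appropriately normalized matrix unit in $\End(M_\mu)$ is immediate.
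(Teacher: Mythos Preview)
Your argument is correct and follows the standard multiplicity-space / Schur's lemma approach, which is precisely what the paper's cited reference \cite[Lemma 2.4]{MPasympcover} does in the symmetric-group setting; the paper itself gives no further details. Your care about phase coherence in the final paragraph is warranted and in fact more explicit than the paper: the phrase ``once and for all'' in the construction of the $w_T$ hides exactly the compatibility condition you spell out, and your recursive construction via fixed $\U(k-1)$-equivariant embeddings is the clean way to guarantee that each $\EE_{R_1,R_2}^{\lambda,\mu,m}$ genuinely commutes with $\U(m)$.
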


\begin{proof}
The proof is essentially the same as that of \cite[Lemma 2.4]{MPasympcover}.
\end{proof}

\subsection{The Weingarten calculus\label{subsec:The-Weingarten-calculus}}

The \emph{Weingarten calculus} is a method of calculating integrals
of the form
\begin{equation}
\int_{u\in\U(n)}u_{i_{1}j_{1}}\cdots u_{i_{k}j_{k}}\bar{u}_{i'_{1}j'_{1}}\cdots\bar{u}_{i'_{k}j'_{k}}d\mu_{\U(n)}^{\mathrm{Haar}}(u)\label{eq:wg-integral}
\end{equation}
 where $d\mu_{\U(n)}^{\mathrm{Haar}}$ is the probability Haar measure
on $\U(n)$. This calculus was developed by Weingarten \cite{weingarten1978asymptotic}
and Xue \cite{xu1997random} before being put on a rigorous footing
by Collins \cite{collins2003moments} and Collins and Śniady \cite{CS}.
One possible development of the Weingarten calculus involves reinterpreting
(\ref{eq:wg-integral}) as a problem in calculating the orthogonal
projection onto the $\U(n)$-invariant vectors in $\End((\C^{n})^{\otimes k})$,
where $u\in\U(n)$ acts on $A\in\End((\C^{n})^{\otimes k})$ by $A\mapsto\pi_{n}^{k}(u)A\pi_{n}^{k}(u^{-1})$,
$\pi_{n}^{k}:\U(n)\to\End((\C^{n})^{\otimes k})$ the diagonal action.
This is actually the point of view that will be relevant to this paper. 

The \emph{Weingarten function, }depending on parameters $n,k$ is
the following element of $\C[S_{k}]$ 
\begin{equation}
\Wg_{n,k}\eqdf\frac{1}{(k!)^{2}}\sum_{\substack{\lambda\vdash k\\
\ell(\lambda)\leq n
}
}\frac{d_{\lambda}^{2}}{D_{\lambda}(n)}\sum_{\sigma\in S_{k}}\chi_{\lambda}(\sigma)\sigma=\frac{1}{k!}\sum_{\substack{\lambda\vdash k\\
\ell(\lambda)\leq n
}
}\frac{d_{\lambda}}{D_{\lambda}(n)}\p_{\lambda}.\label{eq:Wg-def}
\end{equation}
Write $P_{n,k}$ for the orthogonal projection in $\End((\C^{n})^{\otimes k})$
onto the $\U(n)$-invariant vectors. We have the following proposition
of Collins and Śniady \cite[Prop. 2.3]{CS}.
\begin{prop}[Collins-Śniady]
\label{prop:Weingarten}Let $n,k\in\N$. Suppose $A\in\End((\C^{n})^{\otimes k})$.
Then
\[
P_{n,k}[A]=\rho_{n}^{k}\left(\Phi[A]\cdot\Wg_{n,k}\right)
\]
where 
\[
\Phi[A]\eqdf\sum_{\sigma\in S_{k}}\tr(A\rho_{n}^{k}(\sigma^{-1}))\sigma.
\]
\end{prop}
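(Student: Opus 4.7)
The plan is to compute both sides of the claimed identity via Schur-Weyl duality and verify that they agree on each isotypic piece, using Schur orthogonality to handle the symmetric-group side. The starting observation is that under conjugation by $\U(n)$, Proposition~\ref{prop:Schur-weyl-duality} decomposes $\End((\C^{n})^{\otimes k})$ as $\bigoplus_{\lambda,\mu}\Hom(W_n^\mu,W_n^\lambda)\otimes\Hom(V^\mu,V^\lambda)$ with $\lambda,\mu\vdash k$ and $\ell(\lambda),\ell(\mu)\leq n$, and by Schur's lemma the $\U(n)$-fixed subspace is precisely $\bigoplus_\lambda\C\,\Id_{W_n^\lambda}\otimes\End(V^\lambda)$, which coincides with $\rho_n^k(\C[S_k])$. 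Hence $P_{n,k}[A]=\rho_n^k(Y)$ for some $Y\in\C[S_k]$, and the task reduces to verifying that $Y=\Phi[A]\cdot\Wg_{n,k}$ maps to the correct operator block-by-block.

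First I would compute $P_{n,k}[A]$ explicitly in the Schur-Weyl decomposition. Writing the diagonal block $A_{\lambda,\lambda}\in\End(W_n^\lambda)\otimes\End(V^\lambda)$ and applying Schur's lemma on the $\U(n)$-irreducible $W_n^\lambda$, the conjugation average of $B\otimes C$ equals $\tfrac{\tr B}{D_\lambda(n)}\Id_{W_n^\lambda}\otimes C$; off-diagonal blocks $\lambda\neq\mu$ average to zero. This yields
\[
P_{n,k}[A]=\bigoplus_{\lambda:\,\ell(\lambda)\leq n}\Id_{W_n^\lambda}\otimes\tfrac{1}{D_\lambda(n)}\hat C_\lambda,
\]
where $\hat C_\lambda\in\End(V^\lambda)$ is the partial trace of $A_{\lambda,\lambda}$ over the first tensor factor.

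Second I would compute $\pi_\mu(\Phi[A]\cdot\Wg_{n,k})$ for each $\mu\vdash k$. Schur-Weyl gives the trace identity $\tr(A\rho_n^k(\sigma^{-1}))=\sum_{\lambda:\,\ell(\lambda)\leq n}\tr_{V^\lambda}(\hat C_\lambda\,\pi_\lambda(\sigma^{-1}))$. Substituting into the definition of $\Phi[A]$ and applying the matrix-coefficient form of Schur orthogonality, namely $\sum_\sigma(\pi_\lambda(\sigma^{-1}))_{ji}\,\pi_\mu(\sigma)=\delta_{\lambda,\mu}\tfrac{k!}{d_\lambda}E_{ij}$, collapses the double sum to $\pi_\mu(\Phi[A])=\tfrac{k!}{d_\mu}\hat C_\mu$ when $\ell(\mu)\leq n$, and to $0$ when $\ell(\mu)>n$. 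The central-idempotent identity $\pi_\mu(\p_\lambda)=\delta_{\lambda,\mu}\Id_{V^\mu}$ applied to (\ref{eq:Wg-def}) gives $\pi_\mu(\Wg_{n,k})=\tfrac{d_\mu}{k!\,D_\mu(n)}\Id_{V^\mu}$ for $\ell(\mu)\leq n$ and $0$ otherwise. Multiplying, $\pi_\mu(\Phi[A]\cdot\Wg_{n,k})=\tfrac{1}{D_\mu(n)}\hat C_\mu$ for $\ell(\mu)\leq n$, matching the $\mu$-block of $P_{n,k}[A]$ via Schur-Weyl.

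The main technical obstacle is keeping careful track of which $\lambda$ satisfy $\ell(\lambda)\leq n$, since for larger $\lambda$ one has both $W_n^\lambda=0$ and $D_\lambda(n)=0$ (which forces the restriction in (\ref{eq:Wg-def}) so that the division is meaningful). The cleanliness of the final identification hinges on the fact that exactly the same condition $\ell(\mu)>n$ simultaneously makes $\pi_\mu(\Phi[A])$ and $\pi_\mu(\Wg_{n,k})$ vanish, so the product $\Phi[A]\cdot\Wg_{n,k}$ is supported on precisely those isotypic components on which $\rho_n^k$ is faithful, and no ambiguity from the kernel of $\rho_n^k$ intrudes. Everything else amounts to Schur's lemma and the matrix-coefficient form of Schur orthogonality.
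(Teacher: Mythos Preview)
Your proof is correct. Note, however, that the paper does not actually supply a proof of this proposition: it is stated as a result of Collins and \'Sniady and cited directly from \cite[Prop.~2.3]{CS}, so there is no in-paper argument to compare against. Your approach via Schur--Weyl duality, computing both sides on each $W_n^\lambda\otimes V^\lambda$ isotypic block using Schur's lemma for the $\U(n)$-average and matrix-coefficient orthogonality for the $S_k$-side, is essentially the standard argument and is in the spirit of the original Collins--\'Sniady derivation. Your handling of the constraint $\ell(\lambda)\le n$ is also correct: the restriction in \eqref{eq:Wg-def} and the vanishing of $\pi_\mu(\Phi[A])$ for $\ell(\mu)>n$ conspire so that $\rho_n^k$ sees no spurious contributions.
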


\subsection{Free groups and surface groups\label{subsec:Free-groups-and-surface-groups}}

Recall that $\Sigma_{g}$ is a closed topological surface of genus
$g$ with base point $x_{0}$ and we have identified $\pi_{1}(\Sigma_{g},x_{0})\cong\Gamma_{g}=\langle a_{1},b_{1},\ldots,a_{g},b_{g}|[a_{1},b_{1}]\cdots[a_{g},b_{g}]\rangle$. 

Given $w\in\F_{2g}$ we view $w$ as a combinatorial word in $a_{1},a_{1}^{-1},b_{1},b_{1}^{-1},\ldots,a_{g},a_{g}^{-1},b_{g},b_{g}^{-1}$
by writing it in reduced (shortest) form; i.e., $a_{1}$ does not
follow $a_{1}^{-1}$ etc. 

The commutator subgroup $[\F_{2g},\F_{2g}]$ (resp. $[\Gamma_{g},\Gamma_{g}]$)
is the group generated by all elements of the form $[h_{1},h_{2}]\eqdf h_{1}h_{2}h_{1}^{-1}h_{2}^{-1}$
with $h_{1},h_{2}\in\F_{2g}$ (resp. $\Gamma_{g}$). A simple fact
is that $w\in[\F_{2g},\F_{2g}]$ if and only if $a_{i}$ (resp. $b_{i})$
appears the same number of times as $a_{i}^{-1}$ (resp. $b_{i}^{-1}$)
in the reduced word of $w$, for each $1\leq i\leq g$. The abelianization
of $\Gamma_{g}$ coincides with the first singular homology group
of $\Sigma_{g}$ and is isomorphic to $\Z^{2g}$, generated by the
images of $a_{1},b_{1},\ldots,a_{g},b_{g}$. As such, the map induced
by $q_{w}$ from $\F_{2g}$ to the abelianization of $\Gamma_{g}$
has kernel $[\F_{2g},\F_{2g}]$; hence $w\in[\F_{2g},\F_{2g}]$ if
and only if $q_{g}(w)\in[\Gamma_{g},\Gamma_{g}]$. Therefore if $\gamma\in[\Gamma_{g},\Gamma_{g}]$,
any $w$ representing $\gamma$, or the conjugacy class of $\gamma,$must
be in $[\F_{2g},\F_{2g}]$.

\subsection{The Atiyah-Bott-Goldman measure\label{subsec:The-Atiyah-Bott-Goldman-measure}}

We first follow Goldman \cite{GOLDMANSYMPLECTIC} to define the Atiyah-Bott-Goldman
measure on $\M_{g,n}$. Consider the word map\emph{
\begin{align*}
R_{g} & :\SU(n)^{2g}\to\SU(n)\\
R_{g} & :(A_{1},B_{1},\ldots,A_{g},B_{g})\mapsto[A_{1},B_{1}]\cdots[A_{g},B_{g}].
\end{align*}
}We view $\Hom(\Gamma_{g},\SU(n))$ as $R_{g}^{-1}(\id)\subset\SU(n)^{2g}$
via the embedding (\ref{eq:product-identification-hom-space}). The
diagonal action of $\SU(n)$ on $\SU(n)^{2g}$ by conjugation factors
through an action of $\PSU(n)$. This action preserves $\Hom(\Gamma_{g},\SU(n))$.

Let $\Hom(\Gamma_{g},\SU(n))^{\irr}$ denote the set of $\phi\in\Hom(\Gamma_{g},\SU(n))$
that are \emph{irreducible }as linear representations\emph{. }The
conjugation action of $\PSU(n)$ on $\Hom(\Gamma_{g},\SU(n))^{\irr}$
is clearly free and the action is also proper \cite[pg. 205]{GOLDMANSYMPLECTIC}.
Hence
\[
\M_{g,n}\eqdf\Hom(\Gamma_{g},\SU(n))^{\irr}/\PSU(n)
\]
 is a smooth real manifold of dimension 
\[
\dim\M_{g,n}=(2g-2)\dim\SU(n)
\]
\cite[\S 1.3]{GOLDMANSYMPLECTIC}. The complement of $\M_{g,n}$ in
$\Hom(\Gamma_{g},\SU(n))/\PSU(n)$ is the union of finitely many manifolds
of strictly lower dimension than $\M_{g,n}$.

By \cite[\S 1.4]{GOLDMANSYMPLECTIC} for $[\phi]\in\M_{g,n}$ there
is a identification of the tangent fiber
\begin{equation}
T_{[\phi]}(\M_{g,n})\cong H^{1}(\Gamma_{g},\Ad[\phi])\label{eq:tangent-space-cohomology}
\end{equation}
where $\Ad[\phi]$ is $\su(n)$ with the action of $\Gamma_{g}$ given
by composing $\phi$ with the Adjoint action of $\SU(n)$ on $\su(n)$,
and $H^{1}(\Gamma_{g},\Ad[\phi])$ denotes group cohomology with coefficients
in a module (see e.g. \cite[Chapt. 3 \S 1]{BROWN} for a definition
of group cohomology with coefficients). A key property of group cohomology
of $\Gamma_{g}$ is that it identifies naturally with the singular
cohomology of $\Gamma_{g}$ with coefficients in the local system
corresponding to $\Ad[\phi]$. 

We normalize the Killing form on $\su(n)$ so that the induced Riemannian
volume on $\SU(n)$ has unit total mass, i.e. it gives the probability
Haar measure on $\SU(n)$. Cup product together with the Killing form
on $\su(n)$ and Poincaré duality induces an alternating nondegenerate
bilinear form
\[
H^{1}(\Gamma_{g},\Ad[\phi])\times H^{1}(\Gamma_{g},\Ad[\phi])\to H^{0}(\Gamma_{g},\Ad[\phi])\cong\R
\]
where the last isomorphism used that $\phi$ is irreducible (see \cite[\S 1.4]{GOLDMANSYMPLECTIC}
for more details on this argument). This yields via (\ref{eq:tangent-space-cohomology})
a differential two-form $\omega_{g,n}^{\ABG}$ on $\M_{g,n}$ that
turns out to be closed, hence symplectic (\cite[pg. 208]{GOLDMANSYMPLECTIC},
the closedness of the form was originally proved by Atiyah and Bott
\cite{AB}).

The Atiyah-Bott-Goldman (ABG) measure $\mu_{g,n}^{\ABG}$ is the probability
measure on $\M_{g,n}$ induced by the symplectic volume form:
\[
d\mathrm{Vol}_{\M_{g,n}}\eqdf\frac{\wedge^{\frac{1}{2}\dim\M_{g,n}}(\omega_{g,n}^{\ABG})}{(\dim\M_{g,n})!}.
\]
If $f$ is a continuous function on $\M_{g,n}$ then the expected
value of $f$ with respect to $\mu_{g,n}^{\ABG}$ is given by
\begin{equation}
\E_{g,n}[f]\eqdf\int fd\mu_{g,n}^{\ABG}\eqdf\frac{\int_{\M_{g,n}}fd\mathrm{Vol}_{\M_{g,n}}}{\int_{\M_{g,n}}d\mathrm{Vol}_{\M_{g,n}}}.\label{eq:ABGmeasure-def}
\end{equation}

Having defined the ABG measure we now explain how to deduce Corollary
\ref{cor:Fourier-expansion-of-main-integral} from Sengupta's work
\cite{Sengupta}. For mostly technical reasons this involves the introduction
of the \emph{heat kernel}\footnote{The heat kernel will only be involved for a short time before being
dispensed of.} on $\SU(n)$. The heat kernel is a one parameter family of functions
$Q_{t}:\SU(n)\to\R$ for $t\in(0,\infty)$ that, as a function on
$(0,\infty)\times\SU(n)$, is a fundamental solution to the heat equation
on $\SU(n)$ (cf. \cite[\S\S 1.2]{Sengupta} for the precise definition).
What is important here is that the heat kernel has the expansion
\begin{equation}
Q_{t}(h)=\sum_{\substack{\text{\ensuremath{(\rho,W)\in\widehat{\SU(n)}}}}
}e^{-\frac{C(\rho)t}{2}}\dim W\tr(\rho(h))\label{eq:heat-kernel-expansion}
\end{equation}
where $C(\rho)\geq0$ is the eigenvalue of the Casimir operator of
$\SU(n)$ on the representation $(\rho,W)$. This expansion is uniformly
convergent on $\SU(n)$ for each fixed $t>0$ \cite[Thm. 7.2.6]{Buser}.
The following theorem was proved by Sengupta \cite[Thm. 1]{Sengupta}.
\begin{thm}[Sengupta]
\label{thm:Sengupta}Let $g\geq2$ and suppose $f$ is a continuous
$\SU(n)$-conjugation-invariant function on $\SU(n)^{2g}$, and $\tilde{f}$
the function induced by $f$ on $\M_{g,n}$. Then 
\[
\lim_{t\to0_{+}}\int_{\SU(n)^{2g}}f(x)\overline{Q_{t}(R_{g}(x))}d\mu_{\SU(n)^{2g}}^{\mathrm{Haar}}(x)=\frac{1}{n}\int_{\M_{g,n}}\tilde{f}d\mathrm{Vol}_{\M_{g,n}}.
\]
The notation $\lim_{t\to0_{+}}$ means the limit is taken along positive
values of $t$.
\end{thm}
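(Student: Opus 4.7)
The strategy is to treat the heat kernel $Q_t$ as an approximate identity on $\SU(n)$: the expansion (\ref{eq:heat-kernel-expansion}) at $t=0$ formally gives the Peter--Weyl expansion of the Dirac mass at $\id_{\SU(n)}$, and $Q_t$ indeed converges to $\delta_{\id}$ distributionally as $t\to 0_+$. Heuristically, then, $\overline{Q_t(R_g(x))}$ localizes the integrand to the fiber $R_g^{-1}(\id)=\Hom(\Gamma_g,\SU(n))$, so the left-hand side should tend to $\int f$ over this fiber against the measure induced by the product Haar measure on $\SU(n)^{2g}$. The real content of the theorem is that, after restriction to the smooth locus $\Hom(\Gamma_g,\SU(n))^{\irr}$ and descent through the $\PSU(n)$-action, this induced fiber measure equals $\tfrac{1}{n}\,d\mathrm{Vol}_{\M_{g,n}}$.

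One concrete realization of this plan is spectral. Substituting (\ref{eq:heat-kernel-expansion}) and interchanging sum and integral (legitimate since the expansion converges uniformly in $h$ for each fixed $t>0$) rewrites the left-hand side as
\begin{equation*}
\sum_{(\rho,W)\in\widehat{\SU(n)}} e^{-\frac{C(\rho)t}{2}}\,\dim W\int_{\SU(n)^{2g}} f(x)\,\overline{\tr(\rho(R_g(x)))}\,d\mu_{\SU(n)^{2g}}^{\mathrm{Haar}}(x).
\end{equation*}
The delicate analytic step is to push $t\to 0_+$ through the infinite sum; this requires uniform control on the partial sums coming from short-time heat kernel asymptotics on $\SU(n)$, together with the smoothness of $R_g$ and compactness of $\SU(n)^{2g}$.

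The main obstacle is geometric: identifying the induced fiber measure with $\tfrac{1}{n}$ times the ABG symplectic volume. This is a symplectic-geometry computation using the identification (\ref{eq:tangent-space-cohomology}) $T_{[\phi]}\M_{g,n}\cong H^1(\Gamma_g,\Ad[\phi])$ together with the cup-product description of $\omega^{\ABG}_{g,n}$ via the normalized Killing form on $\su(n)$. Concretely, one computes the derivative of $R_g$ at an irreducible $\phi$ in terms of a Fox-calculus matrix, pairs it via Poincar\'e duality on $\Sigma_g$, and shows that the coarea Jacobian of $R_g$ against the product Haar measure matches (up to sign) the symplectic volume on $\M_{g,n}$ times the Haar volume of a $\PSU(n)$-orbit. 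The factor $\tfrac{1}{n}$ arises because the center $Z(\SU(n))\cong\Z/n\Z$ acts trivially by conjugation, so $\SU(n)$- and $\PSU(n)$-orbits in $\Hom^{\irr}$ coincide as sets but the ambient $\SU(n)$-Haar measure (normalized to unit mass) is $n$ times the quotient $\PSU(n)$-Haar measure. Separately, one must verify that the reducible locus $R_g^{-1}(\id)\setminus\Hom^{\irr}$ contributes nothing in the limit, which follows from its strictly smaller dimension combined with the $L^1$-boundedness of $Q_t$.
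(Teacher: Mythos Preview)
The paper does not prove Theorem~\ref{thm:Sengupta} at all: it is quoted verbatim as \cite[Thm.~1]{Sengupta} and used as a black box. The only argument given in the paper following the statement is the deduction of Corollary~\ref{cor:Fourier-expansion-of-main-integral} \emph{from} this theorem, which is the straightforward manipulation of inserting the heat-kernel expansion (\ref{eq:heat-kernel-expansion}), interchanging sum and integral by uniform convergence, and then applying dominated convergence under the hypothesis of absolute convergence. So there is nothing for your proposal to be compared against.

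That said, what you have written is a reasonable outline of how Sengupta's original proof proceeds, but it is not a proof. You yourself flag the two genuine difficulties and then do not resolve either of them. First, the passage $t\to 0_+$ through the infinite sum is not justified by ``short-time heat kernel asymptotics'' alone; in Sengupta's argument this is handled by interpreting the left-hand side as a Yang--Mills partition function with an insertion and using a concrete disintegration of Haar measure against the level sets of $R_g$, not by term-by-term limits of the spectral expansion. Second, and more seriously, the identification of the induced fiber measure with $\tfrac{1}{n}\,d\mathrm{Vol}_{\M_{g,n}}$ is the entire substance of the theorem. Your sketch (``compute the derivative of $R_g$\ldots pairs it via Poincar\'e duality\ldots the coarea Jacobian matches'') names the ingredients but carries out none of the computation; in particular the claim that the singular locus contributes nothing ``by $L^1$-boundedness of $Q_t$'' is not enough, since $\|Q_t\|_{L^1}=1$ for all $t$ gives no decay and one needs an actual estimate on the mass $Q_t$ places near the singular fiber. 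If you want a self-contained proof, you would need to reproduce the coarea/symplectic-reduction computation in full.
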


\begin{proof}[Deduction of Corollary \ref{cor:Fourier-expansion-of-main-integral}
from Theorem \ref{thm:Sengupta}]

Under the same assumptions as Theorem \ref{thm:Sengupta}, using (\ref{eq:heat-kernel-expansion})
one may write for fixed $t>0$
\begin{align}
 & \int_{\SU(n)^{2g}}f(x)\overline{Q_{t}(R_{g}(x))}d\mu_{\SU(n)^{2g}}^{\mathrm{Haar}}(x)\nonumber \\
= & \int_{\SU(n)^{2g}}f(x)\left(\sum_{\substack{\text{\ensuremath{(\rho,W)\in\widehat{\SU(n)}}}}
}e^{-\frac{C(\rho)t}{2}}\dim W\overline{\tr(\rho(R_{g}(x)))}\right)d\mu_{\SU(n)^{2g}}^{\mathrm{Haar}}\nonumber \\
 & =\sum_{\substack{\text{\ensuremath{(\rho,W)\in\widehat{\SU(n)}}}}
}e^{-\frac{C_{\lambda}(\rho)t}{2}}\dim W\int f(x)\overline{\tr(\rho(R_{g}(x)))}d\mu_{\SU(n)^{2g}}^{\mathrm{Haar}}(x)\label{eq:sengupta-temp}
\end{align}
where the interchange of sum and integral is valid by uniform convergence
of the heat kernel expansion. We are given $\gamma\in\Gamma_{g}$
and $w\in\F_{2g}$ representing the conjugacy class of $\gamma$.
We take $f=\tr\circ w$ ($w$ denoting the word map of $w$) so that
$\tilde{f}=\tr_{\gamma}$. Recall the notation $\I(w,\rho)$ from
\ref{eq:I-def}, in this case, Theorem \ref{thm:Sengupta} together
with (\ref{eq:sengupta-temp}) gives
\begin{align}
\frac{1}{n}\int_{\M_{g,n}}\tr_{\gamma}\,d\mathrm{Vol}_{\M_{g,n}} & =\lim_{t\to0_{+}}\sum_{\substack{\text{\ensuremath{(\rho,W)\in\widehat{\SU(n)}}}}
}e^{-\frac{C(\rho)t}{2}}\dim W\,\I(w,\rho).\label{eq:pre-dc-1}
\end{align}
Since we assume the hypothesis that $\sum_{\substack{\text{\ensuremath{(\rho,W)\in\widehat{\SU(n)}}}}
}\dim W\,\I(w,\rho)$ is absolutely convergent, using dominated convergence in (\ref{eq:pre-dc-1})
proves
\[
\int_{\M_{g,n}}\tr_{\gamma}\,d\mathrm{Vol}_{\M_{g,n}}=n\sum_{\substack{\text{\ensuremath{(\rho,W)\in\widehat{\SU(n)}}}}
}\dim W\,\I(w,\rho).
\]
 Combining this with Witten's formula (Theorem \ref{thm:Witten})
we obtain Corollary \ref{cor:Fourier-expansion-of-main-integral}.
\end{proof}

\section{Organization of representations}

\subsection{Models of representations\label{subsec:Models-of-representations}}

In light of Corollary \ref{cor:Fourier-expansion-of-main-integral},
we must evaluate $\I(w,\rho)$ for $(\rho,W)\in\widehat{\SU(n)}$.
We use different models of $(\rho,W)$ throughout the paper: recalling
the definition of $\Omega(B;n)$ from (\ref{eq:omega-def}), viewing
$B$ as fixed and assuming $n$ is large enough depending on $B$
\begin{itemize}
\item If $(\rho,W)\in\Omega(B;n)$ we identify $(\rho,W)\cong(\rho_{n}^{[\mu,\nu]},W_{n}^{[\mu,\nu]})$
for some $\mu,\nu$ with $|\mu|,|\nu|\leq B^{3}$ uniquely determined
by $(\rho,W)$. Hence 
\[
\I(w,\rho)=\I(w,[\mu,\nu])\eqdf\int\tr(w(x))\overline{s_{[\mu,\nu]}(R_{g}(x))}d\mu_{\SU(n)^{2g}}^{\mathrm{Haar}}(x).
\]
\item If $(\rho,W)\in\widehat{\SU(n)}\backslash\Omega(B;n)$ then we identify
$(\rho,W)\cong(\rho_{n}^{\lambda},W_{n}^{\lambda})$ for some $\lambda$
with $\ell(\lambda)\leq n-1$ uniquely determined by $(\rho,W)$.
Hence 
\begin{equation}
\I(w,\rho)=\I(w,\lambda)\eqdf\int\tr(w(x))\overline{s_{\lambda}(R_{g}(x))}d\mu_{\SU(n)^{2g}}^{\mathrm{Haar}}(x).\label{eq:I-lambda-def}
\end{equation}
\end{itemize}

\subsection{Integrating over $\protect\SU(n)^{2g}$ vs $\protect\U(n)^{2g}$\label{subsec:Integrating-over-SU(n)2g}}

At various points it is convenient to execute the integral defining
$\mathcal{I}(w,\lambda)$ or $\mathcal{I}(w,[\mu,\nu])$ as an integral
over $\U(n)^{2g}$ rather than $\SU(n)^{2g}$. This is indeed possible
in all cases that the large-$n$ behavior of $\E_{g,n}[\tr_{\gamma}]$
is interesting. Recall that $[\F_{2g},\F_{2g}]$ is the commutator
subgroup of $\F_{2g}$.
\begin{prop}
~\label{prop:using-U(n)-for-fourier-coefficients}
\begin{enumerate}
\item \label{enu:Fourier-using-U(n)}If $w\in[\F_{2g},\F_{2g}]$ then 
\begin{align*}
\mathcal{I}(w,\lambda) & =\int_{x\in\U(n)^{2g}}\tr(w(x))\overline{s_{\lambda}(R_{g}(x))}d\mu_{\U(n)^{2g}}^{\mathrm{Haar}}(x),\\
\I(w,[\mu,\nu]) & =\int_{x\in\U(n)^{2g}}\tr(w(x))\overline{s_{[\mu,\nu]}(R_{g}(x))}d\mu_{\U(n)^{2g}}^{\mathrm{Haar}}(x),
\end{align*}
In other words, the integrals can be computed using $\U(n)$ instead
of $\SU(n)$.
\item \label{enu:vanishing-expectation-commutator--subgroup}On the other
hand, if $w\in\F_{2g}$ and $w\notin[\F_{2g},\F_{2g}]$, then there
is $n_{0}=n_{0}(w)$ such that for $n\geq n_{0}$, for any $(\rho,W)\in\widehat{\SU(n)}$,
\begin{align*}
\mathcal{I}(w,\rho) & =0
\end{align*}
\end{enumerate}
\end{prop}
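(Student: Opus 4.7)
The plan is to exploit the interaction of $w$ and $R_g$ with central scalar matrices. Two algebraic facts drive both parts: (i) since $R_g \in [\F_{2g},\F_{2g}]$, replacing any argument by a central scalar multiple of itself leaves $R_g(x)$ unchanged; (ii) if $e_i \in \Z$ denotes the exponent sum of the $i$\textsuperscript{th} generator in the reduced form of $w$, then $w(\ldots, \zeta g_i, \ldots) = \zeta^{e_i} w(\ldots, g_i, \ldots)$ for any central scalar $\zeta$, and $w \in [\F_{2g},\F_{2g}]$ if and only if all $e_i = 0$ (as recalled in $\S\S$\ref{subsec:Free-groups-and-surface-groups}).

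For part \ref{enu:Fourier-using-U(n)}, I would work with the surjective homomorphism of compact Lie groups $\phi : \U(1) \times \SU(n) \to \U(n)$, $(z,s) \mapsto z s$ (where $z$ is identified with $z I_n$), whose kernel is the finite subgroup $\{(\zeta, \zeta^{-1} I_n) : \zeta^n = 1\}$. By uniqueness of Haar measure, the pushforward under $\phi$ of the product Haar probability measure on $\U(1) \times \SU(n)$ is the Haar probability measure on $\U(n)$. Applying this coordinatewise,
\[
\int_{\U(n)^{2g}} \tr(w(x))\, \overline{s_\lambda(R_g(x))}\, d\mu_{\U(n)^{2g}}^{\mathrm{Haar}}(x) = \int_{\U(1)^{2g}}\!\int_{\SU(n)^{2g}} \tr(w(zs))\, \overline{s_\lambda(R_g(zs))}\, d\mu^{\mathrm{Haar}}(s)\, d\mu^{\mathrm{Haar}}(z),
\]
where $zs$ denotes componentwise multiplication. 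The hypothesis $w \in [\F_{2g},\F_{2g}]$ combined with (i) and (ii) (all $e_i = 0$) forces the inner integrand to be independent of $z$; the $\U(1)^{2g}$-integral then contributes a factor of $1$ and what remains is exactly $\mathcal{I}(w, \lambda)$. The identical argument handles $s_{[\mu,\nu]}$.

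For part \ref{enu:vanishing-expectation-commutator--subgroup}, pick any generator appearing in $w$ with nonzero exponent sum $e$; say this is $a_1$. The center of $\SU(n)$ is the group of scalar matrices $\{\zeta I_n : \zeta^n = 1\}$. For any such $\zeta$, the substitution $A_1 \mapsto \zeta A_1$ is left-translation of $\SU(n)^{2g}$ by a central element and hence preserves the Haar measure. By (i) and (ii) the substitution leaves $R_g(x)$ invariant and multiplies $w(x)$ by $\zeta^e$, so comparing the original integral with the substituted one yields
\[
\mathcal{I}(w,\rho) = \zeta^e\, \mathcal{I}(w,\rho)
\]
for every $n$-th root of unity $\zeta$. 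Taking $n_0 = n_0(w)$ strictly larger than the maximum absolute value of the exponent sums of generators in $w$ ensures, for $n \geq n_0$, the existence of such a $\zeta$ with $\zeta^e \neq 1$; this forces $\mathcal{I}(w,\rho) = 0$.

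Neither step presents a genuine obstacle — the argument is a clean use of central symmetry. The point I would verify most carefully is the pushforward identification in part \ref{enu:Fourier-using-U(n)}, which relies only on the uniqueness of Haar measure applied to a surjective homomorphism of compact Lie groups with finite kernel; and the bookkeeping for part \ref{enu:vanishing-expectation-commutator--subgroup}, where one must be sure that every central scalar commutes through all other letters in $w$ and so the total multiplicative factor is $\zeta^e$ rather than something position-dependent.
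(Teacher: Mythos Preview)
Your proposal is correct and follows essentially the same approach as the paper: both parts exploit invariance of $R_g$ under central scalars together with the exponent-sum behavior of $w$, and your $\U(1)\times\SU(n)\to\U(n)$ pushforward is just a repackaging of the paper's convolution identity $\mu_{\U(n)^{2g}}^{\mathrm{Haar}}=\mu_{Z(\U(n)^{2g})}^{\mathrm{Haar}}*\mu_{\SU(n)^{2g}}^{\mathrm{Haar}}$. The paper's Part~\ref{enu:vanishing-expectation-commutator--subgroup} uses the specific central element $z_0=(e(1/n)\id,\id,\ldots,\id)$ where you allow an arbitrary $n$-th root of unity, but the argument and the threshold $n_0$ are the same.
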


\begin{proof}
By uniqueness of the Haar measure on $\U(n)^{2g}$, we have 
\begin{equation}
\mu_{\U(n)^{2g}}^{\mathrm{Haar}}=\mu_{Z(\U(n)^{2g})}^{\mathrm{Haar}}*\mu_{\SU(n)^{2g}}^{\mathrm{Haar}}\label{eq:deconvolution}
\end{equation}
 where $*$ denotes convolution of measures. Therefore
\begin{align*}
 & \int_{x\in\U(n)^{2g}}\tr(w(x))\overline{s_{\lambda}(R_{g}(x))}d\mu_{\U(n)^{2g}}^{\mathrm{Haar}}(x)\\
= & \int_{z\in Z(\U(n)^{2g})}\int_{x\in\SU(n)^{2g}}\tr(w(zx))\overline{s_{\lambda}(R_{g}(zx))}d\mu_{\SU(n)^{2g}}^{\mathrm{Haar}}(x)d\mu_{Z(\U(n)^{2g})}^{\mathrm{Haar}}(z)\\
= & \int_{z\in Z(\U(n)^{2g})}\int_{x\in\SU(n)^{2g}}\tr(w(zx))\overline{s_{\lambda}(R_{g}(x))}d\mu_{\SU(n)^{2g}}^{\mathrm{Haar}}(x)d\mu_{Z(\U(n)^{2g})}^{\mathrm{Haar}}(z)\\
= & \int_{x\in\SU(n)^{2g}}\left(\int_{z\in Z(\U(n)^{2g})}\tr(w(zx))d\mu_{Z(\U(n)^{2g})}^{\mathrm{Haar}}(z)\right)\overline{s_{\lambda}(R_{g}(x))}d\mu_{\SU(n)^{2g}}^{\mathrm{Haar}}(x).
\end{align*}
The first equality used (\ref{eq:deconvolution}), the second used
$R_{g}(zx)=R_{g}(x)$ for all $x\in\U(n)^{2g}$ and $z\in Z(\U(n)^{2g})$,
and the last used Fubini's theorem. If $w\in[\F_{2g},\F_{2g}]$ then
by the discussion in $\S\S\ref{subsec:Free-groups-and-surface-groups}$
the letters of $w$ are `balanced' and the inner integrand is $\tr(w(zx))=\tr(w(x))$.
Thus we obtain
\[
\mathcal{I}(w,\lambda)=\int_{x\in\U(n)^{2g}}\tr(w(x))\overline{s_{\lambda}(R_{g}(x))}d\mu_{\U(n)^{2g}}^{\mathrm{Haar}}(x).
\]
In other words, the Fourier coefficient can be computed using $\U(n)$
instead of $\SU(n)$. The proof for $\I(w,[\mu,\nu])$ is exactly
the same. \emph{This proves Part \ref{enu:Fourier-using-U(n)}.}

For the second part, suppose $w$ is not in $[\F_{2g},\F_{2g}]$.
Then the letters of $w$ are not balanced and for $z=\left(e\left(\frac{k_{1}}{n}\right)\id,\ldots,e\left(\frac{k_{2g}}{n}\right)\id\right)\in Z(\SU(n)^{2g})$,
we have 
\[
\tr(w(zx))=e\left(m_{1}\frac{k_{1}}{n}+\cdots+m_{2g}\frac{k_{2g}}{n}\right)\tr(w(x))
\]
where all $m_{i}\in\Z$ and \emph{at least one $m_{i}\neq0$. }On
the other hand $R_{g}(zx)=R_{g}(x)$ as before. Suppose without loss
of generality in the following that $m_{1}\neq0$. Let $z_{0}=\left(e\left(\frac{1}{n}\right)\id,\ldots,\id\right)$.
We have by left-invariance of Haar measure
\begin{align*}
\mathcal{I}(w,\rho) & =\int_{x\in\SU(n)^{2g}}\tr(w(x))\overline{\tr(\rho(R_{g}(x)))}d\mu_{\SU(n)^{2g}}^{\mathrm{Haar}}(x)\\
 & =\int_{x\in\SU(n)^{2g}}\tr(w(z_{0}x))\overline{\tr(\rho(R_{g}(z_{0}x)))}d\mu_{\SU(n)^{2g}}^{\mathrm{Haar}}(x)\\
 & =e\left(\frac{m_{1}}{n}\right)\int_{x\in\SU(n)^{2g}}\tr(w(x))\overline{\tr(\rho(R_{g}(z_{0}x)))}d\mu_{\SU(n)^{2g}}^{\mathrm{Haar}}(x)\\
 & =e\left(\frac{m_{1}}{n}\right)\int_{x\in\SU(n)^{2g}}\tr(w(x))\overline{\tr(\rho(R_{g}(x)))}d\mu_{\SU(n)^{2g}}^{\mathrm{Haar}}(x)=e\left(\frac{m_{1}}{n}\right)\mathcal{I}(w,\rho).
\end{align*}
Hence for $n>m_{1}(w)$ we obtain $\mathcal{I}(w,\rho)=0$ for any
$(\rho,W)\in\widehat{\SU(n)}$. 
\end{proof}
This means the main theorem is proved when $\gamma\notin[\Gamma_{g},\Gamma_{g}]$.
\begin{proof}[Proof of Proposition \ref{prop:commutator-prop}]
Assume $\gamma\notin[\Gamma_{g},\Gamma_{g}]$. Pick $w$ representing
the conjugacy class of $\gamma$ as above. Since $\gamma\notin[\Gamma_{g},\Gamma_{g}]$
this implies $w\notin[\F_{2g},\F_{2g}]$ by the discussion in $\S\S$\ref{subsec:Free-groups-and-surface-groups}.
Hence Corollary \ref{cor:Fourier-expansion-of-main-integral} together
with Proposition \ref{prop:using-U(n)-for-fourier-coefficients} Part
\ref{enu:vanishing-expectation-commutator--subgroup} shows
\[
\E_{g,n}[\tr_{\gamma}]=0
\]
when $n\geq n_{0}(w)$ and the series defining $\zeta(2g-2;n)$ converges,
i.e. $n\geq2$. 
\end{proof}
\emph{As Theorem \ref{thm:Main-theorem} has now been proved for $\gamma\notin[\Gamma_{g},\Gamma_{g}]$,
in the rest of the paper, we may assume $\gamma\in[\Gamma_{g},\Gamma_{g}]$
and hence any $w$ representing the conjugacy class of $\gamma$ is
in $[\F_{2g},\F_{2g}]$.}

\subsection{Rationality of the contribution from $\Omega(B;n)$\label{subsec:Rationality-of-the-contirbution}}

Here we prove the following theorem. Let $\Q(t)$ denote the ring
of rational functions in an indeterminate $t$ with coefficients in
$\Q$. For $f\in\Q(t)$ and $t_{0}\in\Q$ we write $f(t_{0})$ for
the evaluation of the rational function at $t_{0}$, provided that
$t_{0}$ is not a pole of $f$. Let $w\in[\F_{2g},\F_{2g}]$ represent
the conjugacy class of $\gamma\in[\Gamma_{g},\Gamma_{g}]$. 
\begin{thm}
\label{thm:rational-small-dim}There is a rational function $Q_{B,w}\in\Q(t)$
such that for $n\geq|w|+2B^{3}$ 
\[
\sum_{\substack{(\rho,W)\in\Omega(B;n)}
}(\dim W)\mathcal{I}(w,\rho)=Q_{B,w}(n).
\]
\end{thm}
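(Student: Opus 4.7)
The plan is to reduce the sum to a finite $\Q$-linear combination of integrals of products of trace monomials over $\U(n)^{2g}$, then deduce rationality from the Weingarten calculus.

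First, assume $n\geq 2B$ so that the indexing set $\Omega(B;n)$ is in bijection with the fixed finite set $\mathcal{P}_B\eqdf\{(\mu,\nu):\ell(\mu),\ell(\nu)\leq B,\,\mu_1,\nu_1\leq B^2\}$, which is independent of $n$. The sum becomes
\[
\sum_{(\mu,\nu)\in\mathcal{P}_B} D_{[\mu,\nu]}(n)\,\mathcal{I}(w,[\mu,\nu]).
\]
By Corollary \ref{cor:dim-of-rational-representation} each $D_{[\mu,\nu]}(n)$ lies in $\Q[n]$, so it suffices to show $\mathcal{I}(w,[\mu,\nu])\in\Q(n)$ for $n\geq|w|+2B^3$. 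Since $w\in[\F_{2g},\F_{2g}]$, Proposition \ref{prop:using-U(n)-for-fourier-coefficients} permits the integration to be done over $\U(n)^{2g}$. Using $\overline{s_{[\mu,\nu]}(h)}=s_{[\mu,\nu]}(h^{-1})$ for $h\in\U(n)$, Koike's formula (Theorem \ref{thm:rational-char-to-schur}) expands $\overline{s_{[\mu,\nu]}(R_g(x))}$ as an $n$-independent $\Z_{\geq 0}$-linear combination of products $s_{\nu_2}(R_g(x)^{-1})s_{\nu_3}(R_g(x))$ with $|\nu_i|\leq B^3$. The Frobenius base-change formula (\ref{eq:base-change-formula}) then expands each Schur polynomial as a $\Q$-linear combination of power sums $p_\alpha$. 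Substituting $h=R_g(x)$ converts each $p_\alpha(R_g(x))$ into a product of traces $\prod_i\tr(R_g(x)^{\alpha_i})$, i.e. traces of fixed words in $\F_{2g}$. Thus $\mathcal{I}(w,[\mu,\nu])$ is a finite $\Q$-linear combination of integrals of the form
\[
J(v_1,\ldots,v_m;n)\eqdf\int_{\U(n)^{2g}}\tr(w(x))\tr(v_1(x))\cdots\tr(v_m(x))\,d\mu^{\mathrm{Haar}}_{\U(n)^{2g}}(x),
\]
where each $v_i\in\F_{2g}$ is a fixed power of $R_g$.

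The remaining task is to show each $J(v_1,\ldots,v_m;n)\in\Q(n)$ for $n$ large. Applying Fubini and integrating one $\U(n)$-factor at a time, each step is evaluated by the Weingarten calculus of Proposition \ref{prop:Weingarten}. From the explicit formula (\ref{eq:Wg-def}), every Weingarten coefficient has the form $d_\lambda\chi_\lambda(\sigma)/(k!\,D_\lambda(n))$, where $d_\lambda,\chi_\lambda(\sigma)\in\Z$ and $D_\lambda(n)\in\Q[n]$ by the hook-content formula (\ref{eq:hook-content-formula}). The integer $k$ for each factor is bounded above by the total degree of unitaries from that factor appearing in $w,v_1,\ldots,v_m$, and this is controlled in terms of $|w|$ and $B$. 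For $n\geq|w|+2B^3$ every relevant $D_\lambda(n)$ is nonzero, so the Weingarten expression is a well-defined element of $\Q(n)$. Summing the finitely many resulting rational functions, and multiplying by $D_{[\mu,\nu]}(n)\in\Q[n]$, yields $Q_{B,w}\in\Q(t)$ with the required property.

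The only real work is bookkeeping rationality through each step: integrality of symmetric group characters, integrality of Littlewood--Richardson coefficients, rationality of the Frobenius change-of-basis, and polynomiality with rational coefficients of the hook-content dimensions. The main obstacle I would expect is not a conceptual one but rather ensuring that the threshold $|w|+2B^3$ actually suffices to prevent any $D_\lambda(n)$ denominator from vanishing across all Weingarten applications; this amounts to tracking the maximal tensor-power degree contributed to a single $\U(n)$-factor by the words $w$ and $v_1,\ldots,v_m$, which is bounded linearly in $|w|$ plus a term growing with $|\nu_2|+|\nu_3|\leq 2B^3$.
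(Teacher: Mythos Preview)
Your proposal is correct and follows essentially the same route as the paper's proof: fix the index set via $\mathcal{P}_B$, invoke polynomiality of $D_{[\mu,\nu]}(n)$, pass to $\U(n)^{2g}$ via Proposition~\ref{prop:using-U(n)-for-fourier-coefficients}, expand $\overline{s_{[\mu,\nu]}(R_g(x))}$ by Koike's formula and the Frobenius change of basis into power sums, and conclude rationality of each resulting integral by the Weingarten calculus. The only cosmetic difference is that the paper cites \cite[Prop.~1.1]{MPunitary} as a black box for the rationality of the integrals $\int_{\U(n)^{2g}}\tr(w(x))p_{\mu^1}(R_g(x))p_{\mu^2}(R_g(x)^{-1})\,d\mu$, whereas you sketch the Weingarten argument directly; your threshold analysis (bounding the tensor degree per $\U(n)$-factor by $|w|/2+|\mu^1|+|\mu^2|\le |w|+2B^3$) is exactly what underlies that citation.
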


\begin{proof}
Fix $B\geq0$. Following the discussion in $\S\S$\ref{subsec:Models-of-representations}
we have
\[
\sum_{\substack{(\rho,W)\in\Omega(B;n)}
}(\dim W)\mathcal{I}(w,\rho)=\sum_{\mu,\nu\,:\,\ell(\mu),\ell(\nu)\leq B,\mu_{1},\nu_{1}\leq B^{2}}D_{[\mu,\nu]}(n)\I(w,[\mu,\nu]).
\]
Since the index set of the sum is finite and does not depend on $n$
it now suffices to prove that each $D_{[\mu,\nu]}(n)\I(w,[\mu,\nu])$
agrees with a rational function of $n$ for fixed $\mu,\nu$ as above.
By Corollary \ref{cor:dim-of-rational-representation}, each $D_{[\mu,\nu]}(n)$
agrees with a polynomial function of $n$ with coefficients in $\Q$
when $n\geq2B.$ Therefore the proof of the theorem is reduced to
the following:

\emph{Claim: For each $(\mu,\nu)\in\Omega(B)$, when $n\geq|w|+2B^{3}$,
$\I(w,[\mu,\nu])$ agrees with a rational function of $n$ with coefficients
in $\Q$.}

\emph{Proof of claim. }We first use Proposition \ref{prop:using-U(n)-for-fourier-coefficients}
to write
\[
\I(w,[\mu,\nu])=\int_{x\in\U(n)^{2g}}\tr(w(x))\overline{s_{[\mu,\nu]}(R_{g}(x))}d\mu_{\U(n)^{2g}}^{\mathrm{Haar}}(x).
\]
For $x\in\U(n)^{2g}$ we use Theorem \ref{thm:rational-char-to-schur}
followed by the formula (\ref{eq:base-change-formula}) to obtain
with $k=|\mu|,\ell=|\nu|$
\[
s_{[\mu,\nu]}(R_{g}(x))=\sum_{\text{\ensuremath{\substack{\text{YDs }\ensuremath{\mu^{1},\mu^{2}}\\
|\mu^{1}|\leq\ell,|\mu^{2}|\leq k
}
}}}\alpha_{\mu^{1},\mu^{2}}^{\mu,\nu}p_{\mu^{1}}(R_{g}(x))p_{\mu^{2}}(R_{g}(x)^{-1})
\]
 where the $\alpha_{\mu^{1},\mu^{2}}^{\mu,\nu}\in\Q$. Therefore
\[
\I(w,[\mu,\nu])=\sum_{\text{\ensuremath{\substack{\text{YDs}\ensuremath{,\mu^{2}}\\
|\mu^{1}|\leq\ell,|\mu^{2}|\leq k
}
}}}\alpha_{\mu^{1},\mu^{2}}^{\mu,\nu}\int_{x\in\U(n)^{2g}}\tr(w(x))p_{\mu^{1}}(R_{g}(x))p_{\mu^{2}}(R_{g}(x)^{-1})d\mu_{\U(n)^{2g}}^{\mathrm{Haar}}(x).
\]
For every fixed $\mu^{1},\mu^{2}$ appearing in the finite sum above,
\begin{equation}
\int_{x\in\U(n)^{2g}}\tr(w(x))p_{\mu^{1}}(R_{g}(x))p_{\mu^{2}}(R_{g}(x)^{-1})d\mu_{\U(n)^{2g}}^{\mathrm{Haar}}(x)\label{eq:power-sum-integral-naive}
\end{equation}
agrees with a rational function of $n$ by\footnote{This is a straightforward application of the Weingarten calculus.}
\cite[Prop. 1.1]{MPunitary}, for $n\geq|w|+2B^{3}\geq|w|+k+\ell$.
This proves the claim and hence the theorem.
\end{proof}

\section{The contribution from a single family of representations\label{sec:The-contribution-from-single-large-dim}}

\subsection{Statement of main sectional result and setup}

The main theorem of this $\S$\ref{sec:The-contribution-from-single-large-dim}
is the following key estimate that will be used for $(\rho_{n}^{\lambda},W_{n}^{\lambda})\in\widehat{\SU(n)}\backslash\Omega(B;n)$.
\begin{thm}
\label{thm:single-dimension-bound}For $w\in[\F_{2g},\F_{2g}]$ 
\begin{align*}
|\I(w,\lambda)|\leq & \sum_{[I]\in S_{_{n}}\backslash[n]^{|w|}}(n)_{\D(I)}\frac{1}{D_{\lambda}(n)^{2g}}\sum_{\substack{\mu\subset^{\D(I)}\lambda\\
\ell(\mu)\leq n-\D(I)
}
}\\
 & D_{\mu}(n-\D(I))\left(|\lambda/\mu|+|w|\right)^{4g|w|}|\SSTab_{[n-\D(I)+1,n]}(\lambda/\mu)|^{4g}
\end{align*}
where for $I=(i_{1},i_{2},\ldots,i_{|w|})\in[n]^{|w|}$, $\D(I)$
denotes the number of distinct entries of $I$.
\end{thm}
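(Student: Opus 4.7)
Since $w\in[\F_{2g},\F_{2g}]$, Proposition \ref{prop:using-U(n)-for-fourier-coefficients}(\ref{enu:Fourier-using-U(n)}) permits computing $\I(w,\lambda)$ as an integral over $\U(n)^{2g}$. Expanding in the standard basis gives $\tr(w(x))=\sum_{I\in[n]^{|w|}}M_{I}(x)$, where $M_{I}$ is a monomial in matrix entries of the $2g$ factors of $x$ determined by $w$ and the multi-index $I=(i_{1},\ldots,i_{|w|})$. Simultaneous conjugation of all factors by a permutation matrix preserves Haar measure and leaves $s_{\lambda}(R_{g}(x))$ invariant (as $s_{\lambda}$ is a class function) while sending $M_{I}$ to $M_{\sigma I}$. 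Grouping by $S_{n}$-orbits and bounding each orbit's cardinality by $(n)_{\D(I)}$ reduces $|\I(w,\lambda)|$ to a sum over orbit representatives $I_{0}$, which may be taken to be supported in $[\D]$ for $\D\eqdf\D(I)$, of the quantities $\big|\int M_{I_{0}}(x)\,\overline{s_{\lambda}(R_{g}(x))}\,dx\big|$ weighted by $(n)_{\D(I)}$.

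For each $I_{0}$, $M_{I_{0}}$ involves only matrix entries in the $[\D]\times[\D]$ blocks of the factors, and these entries are invariant under left-multiplication of each factor by the block-diagonal subgroup $\U(n-\D)\le\U(n)$ fixing $e_{1},\ldots,e_{\D}$. The \emph{first integration} step introduces, at no cost by bi-invariance of Haar measure, an averaging of each of the $2g$ factors over this copy of $\U(n-\D)$; only the $\overline{s_{\lambda}(R_{g}(x))}$ factor is affected. The commutator structure $R_{g}=[A_{1},B_{1}]\cdots[A_{g},B_{g}]$ is crucial: Schur-lemma-type identities show that each commutator, with its two generator factors averaged over $\U(n-\D)$, contributes both a scalar $1/D_{\lambda}(n)^{2}$ and a projection of $\rho_{n}^{\lambda}$ onto the $\U(n-\D)$-commutant in $\End(W_{n}^{\lambda})$, giving a cumulative prefactor $1/D_{\lambda}(n)^{2g}$ across the $g$ commutators. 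Lemma \ref{lem:commutant} then describes this commutant through the orthonormal basis $\{\EE_{R_{1},R_{2}}^{\lambda,\mu,n-\D}\}$ indexed by $\mu\subset^{\D}\lambda$ with $\ell(\mu)\le n-\D$ and $R_{1},R_{2}\in\SSTab_{[n-\D+1,n]}(\lambda/\mu)$; this produces the sum over $\mu$, the factor $D_{\mu}(n-\D)$ (arising from the $1/\sqrt{D_{\mu}}$ normalisation in (\ref{eq:E-def}) after pairing intertwiners across the $2g$ factors), and the power $|\SSTab_{[n-\D+1,n]}(\lambda/\mu)|^{4g}$ (summing over $R_{1},R_{2}$ for each of the $2g$ intertwiner insertions).

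The \emph{second integration}, over the remaining $[\D]\times[\D]$-block freedom of $x$, is handled by the Weingarten calculus (Proposition \ref{prop:Weingarten}), but applied to an operator-valued integrand on a tensor power of size of order $|w|+|\lambda/\mu|$, because the intertwiners $\EE^{\lambda,\mu,n-\D}$ effectively couple $|\lambda/\mu|$-many extra tensor slots through the Gelfand--Tsetlin decomposition of $W_{n}^{\lambda}$. Rather than expanding $\Wg_{n,|w|+|\lambda/\mu|}$ directly (which is intractable when $|\lambda/\mu|\gg n$), one uses the orthonormal structure from Lemma \ref{lem:commutant} to bound the Hilbert--Schmidt norms of the relevant operators polynomially, producing the $(|\lambda/\mu|+|w|)^{4g|w|}$ factor, with the exponent $4g$ reflecting the number of matrix factors in the commutator product $R_{g}$ and $|w|$ the monomial degree of $M_{I_{0}}$. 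The main obstacle is precisely this second step: standard Weingarten estimates degenerate in the regime $|\lambda/\mu|\gg n$, and the novelty of the approach is carrying the Weingarten calculus through the $\EE$-basis while keeping all bounds polynomial in the parameters.
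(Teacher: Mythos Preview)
Your high-level architecture matches the paper: reduce to $S_n$-orbit representatives, average over a large subgroup $\U(n-\D)^{2g}$ in a \emph{first integration} exploiting the commutator structure of $R_g$, expand in the commutant basis $\{\EE_{R_1,R_2}^{\lambda,\mu,m}\}$ of Lemma~\ref{lem:commutant}, and then perform a \emph{second integration} via the Weingarten calculus. However, two aspects are misattributed or missing.

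First, the factor $1/D_\lambda(n)^{2g}$ does \emph{not} arise from the first integration. Averaging $\overline{s_\lambda(R_g(\cdot))}$ over $\U(m)^{2g}$ (with $m=n-\D$) produces, for each $\mu\subset^\D\lambda$, a factor $1/D_\mu(m)^{2g-1}$, not $1/D_\lambda(n)^{2g}$: the Schur-lemma mechanism you invoke sees only the $\U(m)$-isotypic pieces $W_m^\mu$, so the dimensions that appear are $D_\mu(m)$. Concretely (Proposition~\ref{prop:trhQBQ} and Proposition~\ref{prop:first-integral-done}), the first integration yields $\sum_\mu D_\mu(m)^{-(2g-1)}\sum_{\text{$4g$ skew tableaux}}\int w_I(h)\,\langle h[\EE\otimes\cdots],\EE\otimes\cdots\rangle\,dh$. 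The factor $D_\mu(m)$ in the theorem then emerges as $D_\mu(m)^{2g}/D_\mu(m)^{2g-1}$, where the numerator comes from the \emph{second} integration; likewise the $1/D_\lambda(n)^{2g}$ comes entirely from the second integration (via the cleanup inequality~(\ref{eq:cleaningup1})).

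Second, and more seriously, your description of the second integration is not a proof. The integrand lives in $\End(W_n^\lambda)^{\otimes 2g}$, and to apply the Weingarten calculus one must realise each $\EE_{s,t}^{\lambda,\mu,m}$ inside some $\End((\C^n)^{\otimes K})$. The paper constructs, for each pair $s,t$ of skew tableaux, an explicit element $A_{s,t}^{\lambda,\mu}\in\End((\C^n)^{\otimes|\lambda|})$ (tensor degree $|\lambda|$, not $|\lambda/\mu|$) by iteratively applying the projectors $\p_{\nu_i(s)},\p_{\nu_i(t)}$ along the Gelfand--Tsetlin chain, engineered so that under Schur--Weyl duality $(\FF\otimes\check\FF)[A_{s,t}^{\lambda,\mu}]=\EE_{s,t}^{\lambda,\mu,m}\otimes z$ for some $z\in\End(V^\lambda)$ (Proposition~\ref{prop:pure-tensor-A-prop}). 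One then (i) computes $\|A_{s,t}^{\lambda,\mu}\|$ exactly via Pieri's formula (Proposition~\ref{prop:norm-calc}); (ii) applies Proposition~\ref{prop:Weingarten} to $A_{s,t}^{\lambda,\mu}\otimes X$, exploiting the iterated-projector structure to collapse the Weingarten sum from $S_{|\lambda|+p}$ to $S_{|\mu|}\times S_{|\lambda/\mu|+p}$; (iii) bounds $\|\hat P[A_{s,t}^{\lambda,\mu}\otimes X]\|$ (Proposition~\ref{prop:norm-of-projection}); and (iv) uses Cauchy--Schwarz to decouple the $2g$ generator integrals, so that only the ratio $\|\hat P[A_{s,t}^{\lambda,\mu}\otimes X]\|/\|A_{s,t}^{\lambda,\mu}\|=\|P[\EE_{s,t}^{\lambda,\mu,m}\otimes X]\|$ matters. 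The per-factor bound $D_\mu(m)/D_\lambda(n)$ and the polynomial $(|\lambda/\mu|+|w|)^{2|w|}$ are exactly what this ratio produces after the cleanup estimates. Your phrase ``bound the Hilbert--Schmidt norms polynomially'' hides precisely this construction, which is the technical heart of the argument.
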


In the rest of this $\S$\ref{sec:The-contribution-from-single-large-dim},
we assume $g=2$ for simplicity of exposition. The proofs extend in
a straightforward way to $g\geq3$. We write $\{a,b,c,d\}$ for the
generators of $\F_{4}$ and $R=[a,b][c,d]$. We write $w$ in reduced
form:
\begin{equation}
w=f_{1}^{\varepsilon_{1}}f_{2}^{\varepsilon_{2}}\ldots f_{|w|}^{\varepsilon_{|w|}},\quad\varepsilon_{u}\in\{\pm1\},\,f_{u}\in\{a,b,c,d\},\label{eq:combinatorial-word}
\end{equation}
The expression (\ref{eq:combinatorial-word}) implies that for $h\eqdf(h_{a},h_{b},h_{c},h_{d})\in\U(n)^{4}$
\[
\tr(w(h))=\sum_{i_{j}\in[n]}(h_{f_{1}}^{\epsilon_{1}})_{i_{1}i_{2}}(h_{f_{2}}^{\epsilon_{2}})_{i_{2}i_{3}}\cdots(h_{f_{|w|}}^{\epsilon_{|w|}})_{i_{|w|}i_{1}}.
\]
Define for $I=(i_{1},i_{2},\ldots,i_{|w|})\in[n]^{|w|}$
\[
w_{I}(h)\eqdf(h_{f_{1}}^{\epsilon_{1}})_{i_{1}i_{2}}(h_{f_{2}}^{\epsilon_{2}})_{i_{2}i_{3}}\cdots(h_{f_{|w|}}^{\epsilon_{|w|}})_{i_{|w|}i_{1}}.
\]
After interchanging summation and integration in (\ref{eq:I-lambda-def})
we obtain
\begin{align}
\mathcal{I}(w,\lambda) & =\sum_{I\in[n]^{|w|}}\mathcal{I}^{*}(w_{I},\lambda).\label{eq:index-splitting}
\end{align}
where 
\[
\I^{*}(w_{I},\lambda)\eqdf\int_{h\in\U(n)^{4}}w_{I}(h)\overline{s_{\lambda}(R(h))}d\mu_{\U(n)^{4}}^{\mathrm{Haar}}
\]
Since we have an inclusion $S_{n}\subset\U(n)$ via 0-1 matrices,
for $\sigma\in S_{n}$ we can change variables
\[
(h_{a},h_{b},h_{c},h_{d})\mapsto(h'_{a},h'_{b},h'_{c},h'_{d})\eqdf(\sigma h_{a}\sigma^{-1},\sigma h_{b}\sigma^{-1},\sigma h_{c}\sigma^{-1},\sigma h_{d}\sigma^{-1}).
\]
The measure $d\mu_{\U(n)^{4}}^{\mathrm{Haar}}$ is invariant by this
change of variables and 
\begin{align*}
\overline{s_{\lambda}(R(h'_{a},h'_{b},h'_{c},h'_{d}))} & =\overline{s_{\lambda}(\sigma R(h{}_{a},h{}_{b},h{}_{c},h{}_{d})\sigma^{-1})}\\
 & =\overline{s_{\lambda}(R(h_{a},h_{b},h_{c},h_{d}))}.
\end{align*}
For $I=(i_{1},i_{2},\ldots,i_{|w|})$ and $\sigma\in S_{n}$ we define
$\sigma(I)=(\sigma(i_{1}),\sigma(i_{2}),\ldots,\sigma(i_{|w|}))$
and we have
\[
w_{I}(h')=w_{\sigma(I)}(h)
\]
so we obtain in total
\begin{align*}
\mathcal{I}^{*}(w_{I},\lambda)= & \int_{h\in\U(n)^{4}}w_{_{I}}(h)\overline{s_{\lambda}(R(h))}d\mu_{\U(n)^{4}}^{\mathrm{Haar}}\\
= & \int_{h\in\U(n)^{4}}w_{_{\sigma(I)}}(h)\overline{s_{\lambda}(R(h))}d\mu_{\U(n)^{4}}^{\mathrm{Haar}}\\
= & \mathcal{I}^{*}(w_{\sigma(I)},\lambda).
\end{align*}
We can therefore rewrite (\ref{eq:index-splitting}) as
\begin{align}
\mathcal{I}(w,\lambda) & =\sum_{[I]\in S_{_{n}}\backslash[n]^{|w|}}|S_{n}.I|\mathcal{I}^{*}(w_{I},\lambda)\nonumber \\
 & =\sum_{[I]\in S_{_{n}}\backslash[n]^{|w|}}(n)_{\D(I)}\mathcal{I}^{*}(w_{I},\lambda)\label{eq:index-splitting-two}
\end{align}
where $\D(I)$ is the number of distinct entries in $I$. Most of
the rest of the section will be devoted to estimating $\mathcal{I}^{*}(w_{I},\lambda)$;
the point of the previous calculations is that we can assume $I\in[n-\D(I)+1,n]^{|w|}$
and this will be exploited in $\S\S\ref{subsec:First-integrating-over}$.

\subsection{First integrating over a large subgroup\label{subsec:First-integrating-over}}

We keep the assumptions of the previous section and also assume 
\[
I\in[n-\D(I)+1,n]^{|w|}.
\]
We fix $I$ and hence write $\D=\D(I)$. This assumption means that
the function $w_{I}:\U(n)^{4}\to\C$ is binvariant for $\U(m)^{4}$
where
\[
m\eqdf n-\D.
\]
To simplify notation, \emph{all integrals over groups are done with
respect to the probability Haar measure} and this will be denoted
by $dg$ where $g$ is the group element. 

Our goal is to calculate $\mathcal{I}^{*}(w_{I},\lambda)$.The bi-invariance
of $d\mu_{\U(n)^{4}}^{\mathrm{Haar}}$ and the $\U(m)^{4}$-bi-invariance
of $w_{I}$ means we can write
\begin{align}
\mathcal{I}^{*}(w_{I},\lambda) & =\int_{h\in\U(n)^{4}}w_{_{I}}(h)\overline{s_{\lambda}(R(h))}dh\nonumber \\
 & =\int_{h_{1},h_{2}\in\U(m)^{4}}\left(\int_{h\in\U(n)^{4}}w_{_{I}}(h_{1}hh_{2})\overline{s_{\lambda}(R(h_{1}hh_{2}))}dh\right)dh_{1}dh_{2}\nonumber \\
 & =\int_{h_{1},h_{2}\in\U(m)^{4}}\int_{h\in\U(n)^{4}}w_{_{I}}(h)\overline{s_{\lambda}(R(h_{1}hh_{2}))}dh\,dh_{1}\,dh_{2}\nonumber \\
 & =\int_{h\in\U(n)^{4}}w_{I}(h)\left(\int_{h_{1},h_{2}\in\U(m)^{4}}\overline{s_{\lambda}(R(h_{1}hh_{2}))}dh_{1}dh_{2}\right)dh.\label{eq:Istar-first-form}
\end{align}
The first cancellation we will obtain comes from the integral
\begin{equation}
\int_{h_{1},h_{2}\in\U(m)^{4}}\overline{s_{\lambda}(R(h_{1}hh_{2}))}dh_{1}dh_{2},\quad h\in\U(n)^{4}.\label{eq:s_lambda_int}
\end{equation}
Our approach to this integral follows the same lines as \cite[\S 4.4]{MPasympcover}.
We consider the vector space
\begin{equation}
\W_{R}^{\lambda}\eqdf W_{a}^{\lambda}\otimes\check{W_{a}^{\lambda}}\otimes W_{b}^{\lambda}\otimes\check{W_{b}^{\lambda}}\otimes W_{c}^{\lambda}\otimes\check{W_{c}^{\lambda}}\otimes W_{d}^{\lambda}\otimes\check{W_{d}^{\lambda}}\label{eq:def-W-lambda}
\end{equation}
which is a unitary representation of $\U(n)^{4}$, the subscripts
indicating which elements of $(h_{a},h_{b},h_{c},h_{d})$ act on which
factor, so each $h_{f}$ acts diagonally on two factors.

Let $B_{\lambda}\in\End(\W_{R}^{\lambda})$ be defined via matrix
coefficients by the formula
\begin{align}
 & \left\langle B_{\lambda}\left(v_{1}\otimes\check{v}_{2}\otimes v_{3}\otimes\check{v_{4}}\otimes v_{5}\otimes\check{v}_{6}\otimes v_{7}\otimes\check{v_{8}}\right),w_{1}\otimes\check{w}_{2}\otimes w_{3}\otimes\check{w_{4}}\otimes w_{5}\otimes\check{w}_{6}\otimes w_{7}\otimes\check{w_{8}}\right\rangle \eqdf\nonumber \\
 & ~~~~~~~~~\left\langle v_{5},w_{7}\right\rangle \left\langle w_{5},w_{8}\right\rangle \left\langle w_{6},v_{8}\right\rangle \left\langle v_{3},v_{6}\right\rangle \left\langle v_{1},w_{3}\right\rangle \left\langle w_{1},w_{4}\right\rangle \left\langle w_{2},v_{4}\right\rangle \left\langle v_{7},v_{2}\right\rangle .\label{eq:B-lambda-def}
\end{align}

We have the following lemma analogous to \cite[Lemma 4.7]{MPasympcover}.
\begin{lem}
\label{lem:B-lambda-property}For any $h=(h_{a},h_{b},h_{c},h_{d})\in\U(n)^{4}$,
we have 
\[
\mathrm{tr}_{\W_{R}^{\lambda}}\left(B_{\lambda}\circ(h_{a},h_{b},h_{c},h_{d})\right)=\overline{s_{\lambda}\left([h_{a},h_{b}][h_{c},h_{d}]\right)}.
\]
\end{lem}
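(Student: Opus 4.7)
The plan is to expand the trace $\tr_{\W_{R}^{\lambda}}(B_{\lambda}\circ(h_{a},h_{b},h_{c},h_{d}))$ directly in an orthonormal basis of $W^{\lambda}$, substitute the matrix-coefficient formula \eqref{eq:B-lambda-def}, and recognize the resulting product as the trace of a specific cyclic word in $A\eqdf\rho^{\lambda}(h_{a})$, $B\eqdf\rho^{\lambda}(h_{b})$, $C\eqdf\rho^{\lambda}(h_{c})$, $D\eqdf\rho^{\lambda}(h_{d})$ and their inverses. The eight inner products in \eqref{eq:B-lambda-def} will produce two Kronecker deltas and six matrix entries chained into a single cycle; the cycle will read off as $(\rho^{\lambda}([h_{a},h_{b}][h_{c},h_{d}]))^{-1}$, and the identity $\overline{\tr(U)}=\tr(U^{-1})$ for unitary $U$ will finish the proof.

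Concretely, fix an orthonormal basis $\{e_{i}\}$ of $W^{\lambda}$, so that the tensors $e_{i_{1}}\otimes\check{e}_{i_{2}}\otimes e_{i_{3}}\otimes\cdots\otimes\check{e}_{i_{8}}$ form an orthonormal basis of $\W_{R}^{\lambda}$. Using the convention $g\cdot\check{v}=\check{gv}$ recalled in $\S\S$\ref{subsec:General-representation-theory}, the action of $(h_{a},h_{b},h_{c},h_{d})$ sends this basis vector to
\[
Ae_{i_{1}}\otimes\check{Ae_{i_{2}}}\otimes Be_{i_{3}}\otimes\check{Be_{i_{4}}}\otimes Ce_{i_{5}}\otimes\check{Ce_{i_{6}}}\otimes De_{i_{7}}\otimes\check{De_{i_{8}}}.
\]
I will plug this into the first slot of \eqref{eq:B-lambda-def} and the untransformed basis vector into the second, so that $v_{p}$ becomes the transformed vector and $w_{p}=e_{i_{p}}$ for each $p\in[8]$. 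Of the eight factors in \eqref{eq:B-lambda-def}, the two factors $\langle w_{1},w_{4}\rangle$ and $\langle w_{5},w_{8}\rangle$ immediately collapse to $\delta_{i_{1}i_{4}}$ and $\delta_{i_{5}i_{8}}$, while each of the remaining six factors has the form $\langle Xe_{i_{p}},Ye_{i_{q}}\rangle$ with $X,Y\in\{1,A,B,C,D\}$; by unitarity of $Y$ this equals the $(i_{q},i_{p})$ entry of $Y^{-1}X$.

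After eliminating $i_{4}=i_{1}$ and $i_{8}=i_{5}$, the remaining indices $i_{7},i_{5},i_{6},i_{3},i_{1},i_{2}$ chain through the six matrix-entry factors in a single cycle; contracting them one at a time collapses the sum to $\tr(CD^{-1}C^{-1}BAB^{-1}A^{-1}D)$, which by cyclicity equals $\tr(DCD^{-1}C^{-1}BAB^{-1}A^{-1})$. On the other hand $\rho^{\lambda}([h_{a},h_{b}][h_{c},h_{d}])=ABA^{-1}B^{-1}CDC^{-1}D^{-1}$, whose inverse is $DCD^{-1}C^{-1}BAB^{-1}A^{-1}$; since $\rho^{\lambda}$ is unitary, $\overline{s_{\lambda}([h_{a},h_{b}][h_{c},h_{d}])}=\tr\bigl(\rho^{\lambda}([h_{a},h_{b}][h_{c},h_{d}])^{-1}\bigr)$ coincides with this expression, and the lemma follows.

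The only real obstacle is the bookkeeping: one must (i) use the covariant convention on $\check{W}^{\lambda}$ consistently, so that each $\check{v}_{\bullet}$ in \eqref{eq:B-lambda-def} is evaluated at $h_{f}e_{i_{\bullet}}$ and not at $h_{f}^{-1}e_{i_{\bullet}}$, and (ii) reduce each inner product $\langle v,gw\rangle$ to a single matrix entry of $g^{-1}$ via $g^{*}=g^{-1}$. The asymmetric choice of $v$'s and $w$'s in \eqref{eq:B-lambda-def} has been hand-tuned precisely so that after these reductions the six matrix-entry factors spell out the letters $D,C,D^{-1},C^{-1},B,A,B^{-1},A^{-1}$ in cyclic order; no deeper structural fact is invoked, and the calculation generalizes to any $g\geq2$ by extending the cyclic pattern by further commutator blocks.
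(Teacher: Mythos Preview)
Your proof is correct: the direct expansion in an orthonormal basis, together with the convention $g\cdot\check{w}=\check{gw}$ and the unitarity identity $\overline{\tr(U)}=\tr(U^{-1})$, yields exactly $\tr(DCD^{-1}C^{-1}BAB^{-1}A^{-1})=\overline{s_{\lambda}([h_{a},h_{b}][h_{c},h_{d}])}$ as you compute. The paper does not give a proof of this lemma, merely citing it as analogous to \cite[Lemma~4.7]{MPasympcover}; your verification is the natural one and matches what that reference carries out in the symmetric-group setting.
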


The purpose of Lemma \ref{lem:B-lambda-property} is that it turns
the integral in (\ref{eq:s_lambda_int}) into a \emph{projection operator.
}Indeed, let $Q$ denote the orthogonal projection in $W_{n}^{\lambda}$
onto the $\U(m)^{4}$-invariant vectors.
\begin{lem}
\label{lem:theta-as-trace-of-B}We have 
\begin{align*}
\int_{h_{1},h_{2}\in\U(m)^{4}}\overline{s_{\lambda}(R(h_{1}hh_{2}))}dh_{1}dh_{2} & =\mathrm{tr}_{\W_{R}^{\lambda}}\left(hQB_{\lambda}Q\right)
\end{align*}
.
\end{lem}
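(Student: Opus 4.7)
The plan is to apply Lemma \ref{lem:B-lambda-property} to replace the character $\overline{s_\lambda(R(h_1 h h_2))}$ by a trace on $\W_R^\lambda$, then move both Haar integrals inside the trace and recognize them as averaging operators that produce the projection $Q$.

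Concretely, by Lemma \ref{lem:B-lambda-property} the integrand equals $\mathrm{tr}_{\W_R^\lambda}\!\bigl(B_\lambda \circ (h_1 h h_2)\bigr)$, where the tuple $(h_1 h h_2) \in \U(n)^4$ acts on $\W_R^\lambda$ in the way prescribed by (\ref{eq:def-W-lambda}). The crucial structural observation is that, because each generator $h_f$ acts diagonally on exactly the pair $W_f^\lambda \otimes \check W_f^\lambda$ and on no other factor, the action of $(h_1 h h_2)$ on $\W_R^\lambda$ factors as the composition $H_1 \circ h \circ H_2$ in $\End(\W_R^\lambda)$, where $H_i$ denotes the action on $\W_R^\lambda$ of $h_i \in \U(m)^4$ viewed inside $\U(n)^4$. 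Consequently, by linearity and Fubini,
\[
\int_{h_1,h_2 \in \U(m)^4} \mathrm{tr}_{\W_R^\lambda}\!\bigl(B_\lambda H_1 h H_2\bigr)\,dh_1\,dh_2
= \mathrm{tr}_{\W_R^\lambda}\!\left(B_\lambda \cdot \Bigl(\int_{\U(m)^4} H_1\,dh_1\Bigr) \cdot h \cdot \Bigl(\int_{\U(m)^4} H_2\,dh_2\Bigr)\right).
\]

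To finish, invoke the standard fact that for any unitary representation $\pi$ of a compact group $K$ on a finite-dimensional Hilbert space, $\int_K \pi(k)\,dk$ is the orthogonal projection onto the $K$-invariant subspace. Applied to the action of $\U(m)^4$ on $\W_R^\lambda$, both Haar averages equal $Q$, so the trace becomes $\mathrm{tr}_{\W_R^\lambda}(B_\lambda Q h Q)$, which equals $\mathrm{tr}_{\W_R^\lambda}(h Q B_\lambda Q)$ by cyclicity. There is essentially no obstacle in this argument: the content lies entirely in Lemma \ref{lem:B-lambda-property}, and the present lemma is a routine consequence once that identity and the averaging-gives-projection principle are in hand. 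The only minor subtlety worth checking is the factorization of the diagonal action on $\W_R^\lambda$ described above, which follows immediately from the definition (\ref{eq:def-W-lambda}) since the four pairs of factors are acted on by independent generators.
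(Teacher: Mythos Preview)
Your proof is correct and follows essentially the same route as the paper: apply Lemma~\ref{lem:B-lambda-property}, pull the Haar integrals inside the trace, identify each average over $\U(m)^4$ with the projection $Q$, and then use cyclicity. Your explicit remark on the factorization $H_1\, h\, H_2$ of the diagonal action is a harmless clarification of a step the paper leaves implicit.
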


\begin{proof}
By Lemma \ref{lem:B-lambda-property}
\begin{align*}
\int_{h_{1},h_{2}\in\U(m)^{4}}\overline{s_{\lambda}(R(h_{1}hh_{2}))}dh_{1}dh_{2} & =\int_{h_{1},h_{2}\in\U(m)^{4}}\mathrm{tr}_{\W_{R}^{\lambda}}\left(B_{\lambda}\circ h_{1}hh_{2}\right)dh_{1}dh_{2}\\
 & =\mathrm{tr}_{\W_{R}^{\lambda}}\left(B_{\lambda}\circ\left(\int_{h_{1}\in\U(m)^{4}}h_{1}\right)h\left(\int_{h_{1}\in\U(m)^{4}}h_{2}\right)\right)\\
 & =\mathrm{tr}_{\W_{R}^{\lambda}}\left(B_{\lambda}QhQ\right)=\mathrm{tr}_{\W_{R}^{\lambda}}\left(hQB_{\lambda}Q\right).
\end{align*}
\end{proof}
Recalling the definition of $\EE_{R_{1},R_{2}}^{\lambda,\mu,m}$ from
(\ref{eq:E-def}), we are able to calculate $\mathrm{tr}_{\W_{R}^{\lambda}}\left(hQB_{\lambda}Q\right)$
as follows. 
\begin{prop}
\label{prop:trhQBQ}We have
\begin{align*}
\mathrm{tr}_{\W_{R}^{\lambda}}\left(hQB_{\lambda}Q\right) & =\sum_{\substack{\mu\subset^{\D}\lambda\\
\ell(\mu)\leq m
}
}\frac{1}{D_{\mu}(m)^{3}}\sum_{S_{1},S_{2},S_{3},S_{4},T_{2},T_{4},s_{1},s_{2}\in\SSTab_{[m+1,n]}(\lambda/\mu)}\\
 & \langle h[\EE_{s_{1},T_{2}}^{\lambda,\mu,m}\otimes\EE_{S_{1},s_{1}}^{\lambda,\mu,m}\otimes\EE_{s_{2},T_{4}}^{\lambda,\mu,m}\otimes\EE_{S_{3},s_{2}}^{\lambda,\mu,m}],\EE_{S_{1},S_{4}}^{\lambda,\mu,m}\otimes\EE_{S_{2},T_{2}}^{\lambda,\mu,m}\otimes\EE_{S_{3},S_{2}}^{\lambda,\mu,m}\otimes\EE_{S_{4},T_{4}}^{\lambda,\mu,m}\rangle.
\end{align*}
\end{prop}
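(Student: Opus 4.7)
The plan is to evaluate $\tr_{\W_{R}^{\lambda}}(hQB_{\lambda}Q)$ by expanding $QB_{\lambda}Q$ in an orthonormal basis of the $\U(m)^{4}$-invariant subspace of $\W_{R}^{\lambda}$. Under the isomorphism $W_{n}^{\lambda}\otimes\check{W}_{n}^{\lambda}\cong\End(W_{n}^{\lambda})$, which intertwines the tensor product action of $\U(m)$ on the left with the conjugation action on the right, Lemma \ref{lem:commutant} gives an orthonormal basis $\{\EE_{R_{1},R_{2}}^{\lambda,\mu,m}\}$ for the $\U(m)$-invariants in each factor $W_{f}^{\lambda}\otimes\check{W}_{f}^{\lambda}$, indexed by $\mu\subset^{\D}\lambda$ with $\ell(\mu)\leq m$ and $R_{1},R_{2}\in\SSTab_{[m+1,n]}(\lambda/\mu)$. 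Tensoring over $f\in\{a,b,c,d\}$ yields an ONB $\{e_{\alpha}\}$ of the $\U(m)^{4}$-invariants, each $e_{\alpha}$ being labelled by four triples $(\mu_{f}^{(\alpha)},A_{f}^{(\alpha)},B_{f}^{(\alpha)})$. By cyclicity of the trace and $Q^{2}=Q$,
\[
\tr_{\W_{R}^{\lambda}}(hQB_{\lambda}Q)=\sum_{\alpha,\beta}\langle he_{\beta},e_{\alpha}\rangle\langle B_{\lambda}e_{\alpha},e_{\beta}\rangle,
\]
with the sum restricted to the invariant ONB because $Q$ annihilates its orthogonal complement.

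The central computation is then $\langle B_{\lambda}e_{\alpha},e_{\beta}\rangle$. Substituting the definition (\ref{eq:E-def}) of each $\EE$-basis vector expresses $e_{\alpha}$ (resp.\ $e_{\beta}$) as a prefactored sum over four tableau parameters $T_{f}^{(\alpha)}\in\SSTab_{[m]}(\mu_{f}^{(\alpha)})$ (resp.\ $T_{f}^{(\beta)}$) of tensors of Gelfand--Tsetlin basis vectors. The defining formula (\ref{eq:B-lambda-def}) for $B_{\lambda}$ is a product of eight $W_{n}^{\lambda}$-inner products of such basis vectors; by orthonormality of the Gelfand--Tsetlin basis each such inner product becomes a Kronecker delta that constrains a pair of underlying YDs, a pair of $[m]$-tableaux, and a pair of $[m+1,n]$-tableaux. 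Propagating the eight YD constraints forces all eight $\mu_{f}^{(\alpha)},\mu_{f}^{(\beta)}$ to coincide with a single $\mu$; propagating the eight $T$-constraints likewise forces $T_{a}^{(\alpha)}=\cdots=T_{d}^{(\alpha)}=T_{a}^{(\beta)}=\cdots=T_{d}^{(\beta)}$ to equal a single $T\in\SSTab_{[m]}(\mu)$. Summing over this common $T$ produces $D_{\mu}(m)$, which against the prefactor $1/D_{\mu}(m)^{2}$ from $e_{\alpha}$ and $1/D_{\mu}(m)^{2}$ from $e_{\beta}$ gives the overall $1/D_{\mu}(m)^{3}$ of the statement.

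It remains to parameterize the configurations of the sixteen $\SSTab_{[m+1,n]}(\lambda/\mu)$-labels $A_{f}^{(\alpha)},B_{f}^{(\alpha)},A_{f}^{(\beta)},B_{f}^{(\beta)}$ that satisfy the remaining eight delta constraints. A direct check verifies that the assignment
\[
(A_{a}^{(\beta)},B_{a}^{(\beta)},A_{b}^{(\beta)},B_{b}^{(\beta)},A_{c}^{(\beta)},B_{c}^{(\beta)},A_{d}^{(\beta)},B_{d}^{(\beta)})=(s_{1},T_{2},S_{1},s_{1},s_{2},T_{4},S_{3},s_{2}),
\]
\[
(A_{a}^{(\alpha)},B_{a}^{(\alpha)},A_{b}^{(\alpha)},B_{b}^{(\alpha)},A_{c}^{(\alpha)},B_{c}^{(\alpha)},A_{d}^{(\alpha)},B_{d}^{(\alpha)})=(S_{1},S_{4},S_{2},T_{2},S_{3},S_{2},S_{4},T_{4})
\]
is a bijection between $(\SSTab_{[m+1,n]}(\lambda/\mu))^{8}$ and the solution set of the eight constraints, and that under this bijection $e_{\beta}$ and $e_{\alpha}$ are exactly the two tensors appearing inside the inner product in the statement. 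Assembling these pieces yields the claimed identity. The main technical hurdle is purely combinatorial bookkeeping: correctly reading the eight pairings in \eqref{eq:B-lambda-def} off into constraints on $(\mu,T,A,B)$ and then matching the resulting eight-parameter solution family to the specific parameterization $(S_{1},S_{2},S_{3},S_{4},T_{2},T_{4},s_{1},s_{2})$. The substantive content is that the pattern of pairings in $B_{\lambda}$, which encodes the commutator word $R=[a,b][c,d]$, is rigid enough to collapse all eight YD-slots to a single $\mu$, which is what produces a formula indexed by a single partition rather than a quadruple.
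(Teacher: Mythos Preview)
Your proposal is correct and follows essentially the same approach as the paper. The only organisational difference is that you expand the trace as a double sum $\sum_{\alpha,\beta}\langle he_{\beta},e_{\alpha}\rangle\langle B_{\lambda}e_{\alpha},e_{\beta}\rangle$ and read off the eight Kronecker constraints directly from the defining matrix-coefficient formula \eqref{eq:B-lambda-def}, whereas the paper keeps a single sum over $\alpha$, computes the vector $QB_{\lambda}e_{\alpha}$ explicitly (introducing the free variables $s_{1},s_{2}$ along the way), and then pairs with $e_{\alpha}$; the resulting constraints and final parameterization are identical.
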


\begin{proof}
An orthonormal basis for the $\U(m)^{4}$-invariant vectors in $\W_{R}^{\lambda}$
is given by
\[
\EE_{S_{1},T_{1}}^{\lambda,\mu_{1},m}\otimes\EE_{S_{2},T_{2}}^{\lambda,\mu_{2},m}\otimes\EE_{S_{3},T_{3}}^{\lambda,\mu_{3},m}\otimes\EE_{S_{4},T_{4}}^{\lambda,\mu_{4},m}
\]
where the $\mu_{i}$ range over all $\mu_{i}\subset^{\D}\lambda$
with $\ell(\mu_{i})\leq m$ and each $S_{i},T_{i}\in\SSTab_{[m+1,n]}(\lambda/\mu_{i}).$
This can be extended to a full orthonormal basis of $\W_{R}^{\lambda}$
and hence 
\begin{align}
\mathrm{tr}_{\W_{R}^{\lambda}}\left(hQB_{\lambda}Q\right)= & \sum_{\substack{\mu_{i}\subset^{\D}\lambda\\
\ell(\mu_{i})\leq m
}
}\sum_{S_{i},T_{i}\in\SSTab_{[m+1,n]}(\lambda/\mu_{i})}\langle hQB_{\lambda}\EE_{S_{1},T_{1}}^{\lambda,\mu_{1},m}\otimes\EE_{S_{2},T_{2}}^{\lambda,\mu_{2},m}\otimes\EE_{S_{3},T_{3}}^{\lambda,\mu_{3},m}\otimes\EE_{S_{4},T_{4}}^{\lambda,\mu_{4},m},\nonumber \\
 & \EE_{S_{1},T_{1}}^{\lambda,\mu_{1},m}\otimes\EE_{S_{2},T_{2}}^{\lambda,\mu_{2},m}\otimes\EE_{S_{3},T_{3}}^{\lambda,\mu_{3},m}\otimes\EE_{S_{4},T_{4}}^{\lambda,\mu_{4},m}\rangle.\label{eq:tr-proj-1}
\end{align}
The matrix coefficient
\[
\langle hQB_{\lambda}\EE_{S_{1},T_{1}}^{\lambda,\mu_{1},m}\otimes\EE_{S_{2},T_{2}}^{\lambda,\mu_{2},m}\otimes\EE_{S_{3},T_{3}}^{\lambda,\mu_{3},m}\otimes\EE_{S_{4},T_{4}}^{\lambda,\mu_{4},m},\EE_{S_{1},T_{1}}^{\lambda,\mu_{1},m}\otimes\EE_{S_{2},T_{2}}^{\lambda,\mu_{2},m}\otimes\EE_{S_{3},T_{3}}^{\lambda,\mu_{3},m}\otimes\EE_{S_{4},T_{4}}^{\lambda,\mu_{4},m}\rangle
\]
 will now be calculated in stages. Firstly
\begin{align*}
 & B_{\lambda}\EE_{S_{1},T_{1}}^{\lambda,\mu_{1},m}\otimes\EE_{S_{2},T_{2}}^{\lambda,\mu_{2},m}\otimes\EE_{S_{3},T_{3}}^{\lambda,\mu_{3},m}\otimes\EE_{S_{4},T_{4}}^{\lambda,\mu_{4},m}\\
= & \frac{1}{\sqrt{D_{\mu_{1}}(m)D_{\mu_{2}}(m)D_{\mu_{3}}(m)D_{\mu_{4}}(m)}}B_{\lambda}\\
 & \sum_{R_{i}\in\SSTab_{[m]}(\mu_{i})}w_{R_{1}\sqcup S_{1}}\otimes\check{w}_{R_{1}\sqcup T_{1}}\otimes w_{R_{2}\sqcup S_{2}}\otimes\check{w}_{R_{2}\sqcup T_{2}}\otimes w_{R_{3}\sqcup S_{3}}\otimes\check{w}_{R_{3}\sqcup T_{3}}\otimes w_{R_{4}\sqcup S_{4}}\otimes\check{w}_{R_{4}\sqcup T_{4}}\\
= & \frac{1}{\sqrt{D_{\mu_{1}}(m)D_{\mu_{2}}(m)D_{\mu_{3}}(m)D_{\mu_{4}}(m)}}\sum_{R_{i}\in\SSTab_{[m]}(\mu_{i})}\mathbf{1}\{R_{2}\sqcup S_{2}=R_{3}\sqcup T_{3},R_{1}\sqcup T_{1}=R_{4}\sqcup S_{4}\}\\
 & \sum_{t_{1},t_{2}\in\SSTab_{[n]}(\lambda)}w_{t_{1}}\otimes\check{w}_{R_{2}\sqcup T_{2}}\otimes w_{R_{1}\sqcup S_{1}}\otimes\check{w}_{t_{1}}\otimes w_{t_{2}}\otimes\check{w}_{R_{4}\sqcup T_{4}}\otimes w_{R_{3}\sqcup S_{3}}\otimes\check{w}_{t_{2}}.
\end{align*}
We have 
\begin{align*}
 & Q[w_{t_{1}}\otimes\check{w}_{R_{2}\sqcup T_{2}}\otimes w_{R_{1}\sqcup S_{1}}\otimes\check{w}_{t_{1}}\otimes w_{t_{2}}\otimes\check{w}_{R_{4}\sqcup T_{4}}\otimes w_{R_{3}\sqcup S_{3}}\otimes\check{w}_{t_{2}}]\\
= & \frac{\mathbf{1}\{t_{1}\lvert_{\leq m}=R_{2}=R_{1},t_{2}\lvert_{\leq m}=R_{4}=R_{3}\}}{\sqrt{D_{\mu_{1}}(m)D_{\mu_{2}}(m)D_{\mu_{3}}(m)D_{\mu_{4}}(m)}}\\
 & \EE_{t_{1}\lvert_{>m},T_{2}}^{\lambda,\mu_{2},m}\otimes\EE_{S_{1},t_{1}\lvert_{>m}}^{\lambda,\mu_{1},m}\otimes\EE_{t_{2}\lvert_{>m},T_{4}}^{\lambda,\mu_{4},m}\otimes\EE_{S_{3},t_{2}\lvert_{>m}}^{\lambda,\mu_{3},m}.
\end{align*}
Combining the previous two calculations we obtain
\begin{align*}
 & QB_{\lambda}[\EE_{S_{1},T_{1}}^{\lambda,\mu_{1},m}\otimes\EE_{S_{2},T_{2}}^{\lambda,\mu_{2},m}\otimes\EE_{S_{3},T_{3}}^{\lambda,\mu_{3},m}\otimes\EE_{S_{4},T_{4}}^{\lambda,\mu_{4},m}]\\
= & \frac{1}{D_{\mu_{1}}(m)D_{\mu_{2}}(m)D_{\mu_{3}}(m)D_{\mu_{4}}(m)}\sum_{R_{i}\in\SSTab_{[m]}(\mu_{i})}\mathbf{1}\{R_{2}\sqcup S_{2}=R_{3}\sqcup T_{3},R_{1}\sqcup T_{1}=R_{4}\sqcup S_{4}\}\\
 & \sum_{t_{1},t_{2}\in\SSTab_{[n]}(\lambda)}\mathbf{1}\{t_{1}\lvert_{\leq m}=R_{2}=R_{1},t_{2}\lvert_{\leq m}=R_{4}=R_{3}\}\EE_{t_{1}\lvert_{>m},T_{2}}^{\lambda,\mu_{2},m}\otimes\EE_{S_{1},t_{1}\lvert_{>m}}^{\lambda,\mu_{1},m}\otimes\EE_{t_{2}\lvert_{>m},T_{4}}^{\lambda,\mu_{4},m}\otimes\EE_{S_{3},t_{2}\lvert_{>m}}^{\lambda,\mu_{3},m}\\
= & \frac{\mathbf{1}\{S_{2}=T_{3},T_{1}=S_{4},\mu_{1}=\mu_{2}=\mu_{3}=\mu_{4}\}}{D_{\mu_{1}}(m)^{3}}\sum_{s_{1},s_{2}\in\SSTab_{[m+1,n]}(\lambda/\mu_{1})}\\
 & \EE_{s_{1},T_{2}}^{\lambda,\mu_{2},m}\otimes\EE_{S_{1},s_{1}}^{\lambda,\mu_{1},m}\otimes\EE_{s_{2},T_{4}}^{\lambda,\mu_{4},m}\otimes\EE_{S_{3},s_{2}}^{\lambda,\mu_{3},m}.
\end{align*}
Therefore from (\ref{eq:tr-proj-1}) 
\begin{align*}
\mathrm{tr}_{\W_{R}^{\lambda}}\left(hQB_{\lambda}Q\right)= & \sum_{\substack{\mu\subset^{\D}\lambda\\
\ell(\mu)\leq m
}
}\frac{1}{D_{\mu}(m)^{3}}\sum_{S_{i},T_{i}\in\SSTab_{[m+1,n]}(\lambda/\mu)}\mathbf{1}\{S_{2}=T_{3},T_{1}=S_{4}\}\sum_{s_{1},s_{2}\in\SSTab_{[m+1,n]}(\lambda/\mu)}\\
 & \langle h[\EE_{s_{1},T_{2}}^{\lambda,\mu,m}\otimes\EE_{S_{1},s_{1}}^{\lambda,\mu,m}\otimes\EE_{s_{2},T_{4}}^{\lambda,\mu,m}\otimes\EE_{S_{3},s_{2}}^{\lambda,\mu,m}],\EE_{S_{1},T_{1}}^{\lambda,\mu,m}\otimes\EE_{S_{2},T_{2}}^{\lambda,\mu,m}\otimes\EE_{S_{3},T_{3}}^{\lambda,\mu,m}\otimes\EE_{S_{4},T_{4}}^{\lambda,\mu,m}\rangle.
\end{align*}
\end{proof}
Now combining (\ref{eq:Istar-first-form}), Lemma \ref{lem:theta-as-trace-of-B},
and Proposition \ref{prop:trhQBQ}, we obtain:
\begin{prop}
\label{prop:first-integral-done}For $I\in[m+1,n]^{|w|}$
\begin{align*}
\mathcal{I}^{*}(w_{I},\lambda) & =\sum_{\substack{\mu\subset^{\D}\lambda\\
\ell(\mu)\leq m
}
}\frac{1}{D_{\mu}(m)^{3}}\sum_{S_{1},S_{2},S_{3},S_{4},T_{2},T_{4},s_{1},s_{2}\in\SSTab_{[m+1,n]}(\lambda/\mu)}\\
 & \int_{h\in\U(n)^{4}}w_{I}(h)\\
 & \langle h[\EE_{s_{1},T_{2}}^{\lambda,\mu,m}\otimes\EE_{S_{1},s_{1}}^{\lambda,\mu,m}\otimes\EE_{s_{2},T_{4}}^{\lambda,\mu,m}\otimes\EE_{S_{3},s_{2}}^{\lambda,\mu,m}],\EE_{S_{1},T_{1}}^{\lambda,\mu,m}\otimes\EE_{S_{2},T_{2}}^{\lambda,\mu,m}\otimes\EE_{S_{3},T_{3}}^{\lambda,\mu,m}\otimes\EE_{S_{4},T_{4}}^{\lambda,\mu,m}\rangle dh.
\end{align*}
\end{prop}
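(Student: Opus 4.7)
The plan is essentially to stitch together the three ingredients that have just been assembled. Starting from equation (\ref{eq:Istar-first-form}), I have
\[
\mathcal{I}^{*}(w_{I},\lambda)=\int_{h\in\U(n)^{4}}w_{I}(h)\left(\int_{h_{1},h_{2}\in\U(m)^{4}}\overline{s_{\lambda}(R(h_{1}hh_{2}))}\,dh_{1}dh_{2}\right)dh,
\]
so the only task is to express the inner parenthesized integral in a form amenable to the subsequent Weingarten analysis.

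First I would apply Lemma \ref{lem:theta-as-trace-of-B} to rewrite the inner double integral over $\U(m)^{4}\times \U(m)^{4}$ as the single trace $\mathrm{tr}_{\W_R^\lambda}(hQB_\lambda Q)$. This reduction is the payoff of having introduced $B_\lambda$: the bi-invariant Haar averaging collapses into a projection onto $\U(m)^4$-invariants sandwiching $B_\lambda$, and the $h$-dependence is cleanly isolated on the left.

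Second, I would substitute the explicit expansion of $\mathrm{tr}_{\W_R^\lambda}(hQB_\lambda Q)$ supplied by Proposition \ref{prop:trhQBQ}, which rewrites this trace as a finite sum over $\mu \subset^{\D}\lambda$ with $\ell(\mu)\leq m$ and over tuples of semistandard tableaux in $\SSTab_{[m+1,n]}(\lambda/\mu)$, with summand an inner product of $h$-translates of tensor products of the $\EE^{\lambda,\mu,m}_{\bullet,\bullet}$ operators. Because this is a finite sum (once $\lambda, \mu, m, n$ are fixed) and $w_I$ is a bounded continuous function of $h$, Fubini's theorem permits pulling the sums outside the $h$-integral, producing exactly the displayed expression.

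The main obstacle in this chain was already dispatched in the proof of Proposition \ref{prop:trhQBQ}: one had to compute $B_\lambda$ on a Gelfand--Tsetlin basis vector using the contraction pattern (\ref{eq:B-lambda-def}), then project with $Q$ onto $\U(m)^4$-invariants, and track the indicator constraints $\{R_2\sqcup S_2 = R_3\sqcup T_3,\ R_1\sqcup T_1 = R_4\sqcup S_4,\ t_i|_{\leq m}=R_i\}$ which force the equalities $\mu_1=\mu_2=\mu_3=\mu_4$, $S_2=T_3$, $T_1=S_4$ and eliminate the $R_i$-sums. Once that combinatorial bookkeeping is done, the present proposition is just a mechanical substitution followed by a trivial Fubini step; no further estimation is required at this stage.
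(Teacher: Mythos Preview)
Your proposal is correct and follows exactly the same route as the paper: the proof there consists of the single sentence ``combining (\ref{eq:Istar-first-form}), Lemma \ref{lem:theta-as-trace-of-B}, and Proposition \ref{prop:trhQBQ}'', which is precisely the chain you describe. The Fubini step you mention is harmless (finite sums, bounded integrand) and is left implicit in the paper.
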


This is progress because now the integrand is $\U(m)^{4}$-bi-invariant
and hence the integral is now essentially over $\U(m)^{4}\backslash\U(n)^{4}/\U(m)^{4}$
rather than the full $\U(n)^{4}$. Moreover we were able to exploit
the structure of the relator $R$ of $\Gamma_{g}$ to get a lot of
diagonality in the sum: the fact that $\mu$ is the same in each tensor
factor of the equation stated in Proposition \ref{prop:first-integral-done}.
We show how to proceed further in the next section.

\subsection{Second integration: strategy\label{subsec:Second-integration:-overview}}

The only method we really have to proceed from Proposition \ref{prop:first-integral-done}
is to use the Weingarten calculus from $\S\S$\ref{subsec:The-Weingarten-calculus}.
The caveat is that this works well when integrating functions that
are finite products of matrix coefficients in the standard representation
of $\U(n)$. Here we are concerned with the integral
\begin{equation}
\int_{h\in\U(n)^{4}}w_{I}(h)\langle h[\EE_{s_{1},T_{2}}^{\lambda,\mu,m}\otimes\EE_{S_{1},s_{1}}^{\lambda,\mu,m}\otimes\EE_{s_{2},T_{4}}^{\lambda,\mu,m}\otimes\EE_{S_{3},s_{2}}^{\lambda,\mu,m}],\EE_{S_{1},T_{1}}^{\lambda,\mu,m}\otimes\EE_{S_{2},T_{2}}^{\lambda,\mu,m}\otimes\EE_{S_{3},T_{3}}^{\lambda,\mu,m}\otimes\EE_{S_{4},T_{4}}^{\lambda,\mu,m}\rangle dh\label{eq:w_I-hat-after-first-integral}
\end{equation}
appearing in Proposition \ref{prop:first-integral-done}. So to use
the Weingarten calculus we must write the integrand of (\ref{eq:w_I-hat-after-first-integral})
as a product of matrix coefficients in some $\End((\C^{n})^{\otimes K})$.

Since we assume $w\in[\F_{4},\F_{4}]$, by the observation about the
exponents of letters in the reduced word of $w$ from $\S\S$\ref{subsec:Free-groups-and-surface-groups},
(\ref{eq:w_I-hat-after-first-integral}) is equal to the product of
four independent integrals over $\U(n)$ of the form
\begin{equation}
\int_{\U(n)}h_{i_{1}j_{1}}\cdots h_{i_{p}j_{p}}\bar{h}_{i'_{1}j'_{1}}\cdots\bar{h}_{i'_{p}j'_{p}}\langle h[\EE_{s,t}^{\lambda,\mu,m}],\EE_{s',t'}^{\lambda,\mu,m}\rangle dh,\label{eq:mat-coef-int}
\end{equation}
one for each $a,b,c,d$, and where 
\begin{align}
i_{k},j_{k} & \in[m+1,n],\quad\forall k\in[p]\nonumber \\
s,s,t,t' & \in\SSTab_{[m+1,n]}(\lambda/\mu).\label{eq:int-assumptions}
\end{align}
So we will estimate (\ref{eq:mat-coef-int}) under the assumptions
(\ref{eq:int-assumptions}) and later take the product of the four
resulting bounds to estimate (\ref{eq:w_I-hat-after-first-integral}).

We now outline the strategy that we will follow to estimate the integral
(\ref{eq:mat-coef-int}).
\begin{enumerate}
\item We can write 
\[
h_{i_{1}j_{1}}\cdots h_{i_{p}j_{p}}\bar{h}_{i'_{1}j'_{1}}\cdots\bar{h}_{i'_{p}j'_{p}}=\langle\pi_{n}^{p,p}(h)X,X'\rangle
\]
 where $X$, $X'\in(\C^{n})^{\otimes p}\otimes(\check{\C^{n}})^{\otimes p}$
are pure tensors of the standard basis elements and dual standard
basis elements; moreover they only involve
\[
e_{k},\check{e}_{\ell},\quad k,l\in[m+1,n].
\]
 This means that 
\[
h_{i_{1}j_{1}}\cdots h_{i_{p}j_{p}}\bar{h}_{i'_{1}j'_{1}}\cdots\bar{h}_{i'_{p}j'_{p}}\langle h[\EE_{s,t}^{\lambda,\mu,m}],\EE_{s',t'}^{\lambda,\mu,m}\rangle=\langle h[\EE_{s,t}^{\lambda,\mu,m}\otimes X],\EE_{s',t'}^{\lambda,\mu,m}\otimes X'\rangle.
\]
The implication for this to our key integral (\ref{eq:mat-coef-int})
is that
\[
\eqref{eq:mat-coef-int}=\langle P[\EE_{s,t}^{\lambda,\mu,m}\otimes X],\EE_{s',t'}^{\lambda,\mu,m}\otimes X'\rangle=\langle P[\EE_{s,t}^{\lambda,\mu,m}\otimes X],P[\EE_{s',t'}^{\lambda,\mu,m}\otimes X']\rangle
\]
where $P$ is the selfadjoint orthogonal projection on $\End(V^{\lambda})\otimes(\C^{n})^{\otimes p}\otimes(\check{\C^{n}})^{\otimes p}$
onto the $\U(n)$-invariant vectors. Here, we accept some loss of
accuracy by using the Schwarz inequality to deduce
\begin{equation}
|\eqref{eq:mat-coef-int}|\leq\|P[\EE_{s,t}^{\lambda,\mu,m}\otimes X]\|\|P[\EE_{s',t'}^{\lambda,\mu,m}\otimes X']\|.\label{eq:schwarz}
\end{equation}
\item For any $s,t\in\SSTab_{[m+1,n]}(\lambda/\mu)$ we will construct a
vector $A_{s,t}^{\lambda,\mu}\in\End((\C^{n})^{\otimes B})$ in $\S\S$\ref{subsec:Construction-of-A}
such that under the Schur-Weyl intertwiner (cf. Proposition \ref{prop:Schur-weyl-duality})
we have
\[
(\FF\otimes\check{\FF})[A_{s,t}^{\lambda,\mu}]=\EE_{s,t}^{\lambda,\mu,m}\otimes z_{s,t}^{\lambda,\mu}\in\End(W^{\lambda})\otimes\End(V^{\lambda}),
\]
for some $z_{s,t}^{\lambda,\mu}\in\End(V^{\lambda})$. This property
is established in Proposition \ref{prop:pure-tensor-A-prop}. 
\item In Proposition \ref{prop:norm-calc} we calculate $\|A_{s,t}^{\lambda,\mu}\|$.
As $\EE_{s,t}^{\lambda,\mu,m}$ has unit norm, this is the same as
$\|z_{s,t}^{\lambda,\mu}\|$.
\item We now calculate $\hat{P}[A_{s,t}^{\lambda,\mu}\otimes X]$ where
$\hat{P}$ is the projection onto the $\U(n)$ invariant vectors in
$\End((\C^{n})^{\otimes B})\otimes(\C^{n})^{\otimes p}\otimes(\check{\C^{n}})^{\otimes p}$.
We are able to do this using the Weingarten calculus; this is the
reason for the construction of $A_{s,t}^{\lambda,\mu}$. This calculation
takes place in $\S\S$\ref{subsec:Projection-of-AtimesX}.
\item We estimate $\|\hat{P}[A_{s,t}^{\lambda,\mu}\otimes X]\|$ in Proposition
\ref{prop:norm-of-projection}.
\item On the other hand, after suitable identifications, we have 
\[
(\FF\otimes\check{\FF}\otimes\Id_{(\C^{n})^{\otimes p}\otimes(\check{\C^{n}})^{\otimes p}})[\hat{P}[A_{s,t}^{\lambda,\mu}\otimes X]]=P[\EE_{s,t}^{\lambda,\mu,m}\otimes X]\otimes z_{s,t}^{\lambda,\mu},
\]
and hence 
\begin{equation}
\|P[\EE_{s,t}^{\lambda,\mu,m}\otimes X]\|=\frac{\|\hat{P}[A_{s,t}^{\lambda,\mu}\otimes X]\|}{\|A_{s,t}^{\lambda,\mu}\|}.\label{eq:crit-ratio-strat-form}
\end{equation}
Since we have calculated the denominator on the right hand side above,
and bounded the numerator, we have bounded $\|P[\EE_{s,t}^{\lambda,\mu,m}\otimes X]\|$.
We can use exactly the same method to bound $\|P[\EE_{s',t'}^{\lambda,\mu,m}\otimes X']\|$
and putting these estimates into (\ref{eq:schwarz}) we obtain an
upper bound on $|\eqref{eq:mat-coef-int}$\textbar , as desired.
\end{enumerate}
\begin{rem}
It does not seem possible to exactly evaluate (\ref{eq:w_I-hat-after-first-integral})
using this method. Indeed one might hope that 
\[
\eqref{eq:w_I-hat-after-first-integral}=\frac{\langle\hat{P}[A_{s,t}^{\lambda,\mu}\otimes X],\hat{P}[A_{s',t'}^{\lambda,\mu}\otimes X']\rangle}{\|A_{s,t}^{\lambda,\mu}\|\|A_{s',t'}^{\lambda,\mu}\|}.
\]
The construction of $A_{s,t}^{\lambda,\mu},A_{s',t'}^{\lambda,\mu}$
we give here means that the numerator above does not in general coincide
with $\langle P[\EE_{s,t}^{\lambda,\mu,m}\otimes X],P[\EE_{s',t'}^{\lambda,\mu,m}\otimes X']\rangle$.
The use of the Schwarz inequality bypasses this problem, at the cost
of introducing an inequality.
\end{rem}

The components of this strategy will be carried out in the following
sections.

\subsection{Construction of $A_{s,t}^{\lambda,\mu}$\label{subsec:Construction-of-A}}

Now fix $m,\D,b,B\in\N_{0}$, $n=m+\D$, $b\leq B$. Also fix $\lambda\vdash B$
and $\mu\vdash b$ with $\ell(\lambda)\leq n$ and $\mu\subset^{\D}\lambda$
with $\ell(\mu)\leq m$. We also fix 
\[
s,t\in\SSTab_{[m+1,n]}(\lambda/\mu).
\]
From this data we will construct a special vector in $(\C^{n})^{\otimes B}\otimes(\check{\C^{n}})^{\otimes B}\cong\End((\C^{n})^{\otimes B})$.
This will be done in stages.

Recall the weight functions $\omega_{s},\omega_{t}:[m+1,n]\to\N_{0}$
from $\S$\ref{subsec:Young-diagrams-and}. For $z\in\C[S_{k}]$,
we will write $\rho_{n}^{k}(z)$ for the resulting element of $\End((\C^{n})^{\otimes k})$
according to the representation of $S_{k}$ on $(\C^{n})^{\otimes k}$.
We also have fixed linear embeddings for $M\leq n$
\[
\C^{M}=\langle e_{1},\ldots,e_{M}\rangle\subset\C^{n}=\langle e_{1},\ldots,e_{n}\rangle,
\]
similarly $\check{\C}^{M}\subset\check{\C}^{n}$, hence using the
type of isomorphism as in (\ref{eq:hom}) for $M_{1},M_{2}\leq k$
we obtain a fixed linear embedding
\[
\Hom((\C^{n})^{M_{1}},(\C^{n})^{M_{2}})\subset\End((\C^{n})^{\otimes k}).
\]
The skew tableaux $s$ yields a sequence of tableaux
\[
\mu=\nu_{0}(s)\subset^{1}\nu_{1}(s)\subset^{1}\cdots\subset^{1}\nu_{\D}(s)=\lambda
\]
where $\nu_{j}(s)$ is the YD given by $\mu$ along with the boxes
of $s$ containing numbers $\leq m+j$. Let $B_{j}(s)\eqdf|\nu_{j}(s)|$
and similarly define $\nu_{j}(t),B_{j}(t)$ for $0\leq j\leq\D$ using
the boxes of $t$. We have
\[
b=B_{0}(s)\leq\cdots\leq B_{\D}(s)=B.
\]
Recall the projection operators $\p_{\nu}$ from $\S\S$\ref{subsec:Representation-theory-of-sym-groups}.
We use the natural inclusions e.g. $S_{B_{i}(s)}\subset S_{B}$ to
view e.g. $\p_{\nu_{i}(s)}\subset\C[S_{B}]$. 

Let
\begin{align*}
A_{0} & \eqdf\rho_{m}^{b}(\p_{\mu})\in\End((\C^{m})^{\otimes b})\\
 & \cdots\\
A_{i} & \eqdf\rho_{m+i}^{B_{i}(t)}(\p_{\nu_{i}(t)})\left(A_{i-1}\otimes e_{m+i}^{\otimes\omega_{t}(m+i)}\otimes\check{e}_{m+i}^{\otimes\omega_{s}(m+i)}\right)\rho_{m+i}^{B_{i}(s)}(\p_{\nu_{i}(s)})\quad i\geq1\\
A_{i} & \in\Hom((\C^{m+i})^{\otimes B_{i}(s)},(\C^{m+i})^{\otimes B_{i}(t)})\\
 & \cdots\\
A_{s,t}^{\lambda,\mu} & \eqdf A_{\D}\in\Hom((\C^{n})^{\otimes B_{\D}(s)},(\C^{n})^{\otimes B_{\D}(t)})\cong\End((\C^{n})^{\otimes B}).
\end{align*}
We have the following proposition. Recalling the intertwiner $\FF$
from Schur-Weyl duality (Proposition \ref{prop:Schur-weyl-duality}),
we obtain a map
\begin{align}
\FF\otimes\check{\FF}:\End((\C^{n})^{\otimes B}) & \to\sum_{\substack{\lambda_{1},\lambda_{2}\vdash B\\
\ell(\lambda_{i})\leq n
}
}W_{n}^{\lambda_{1}}\otimes\check{W_{n}^{\lambda_{2}}}\otimes V^{\lambda_{1}}\otimes\check{V^{\lambda_{2}}}\nonumber \\
 & \cong\sum_{\substack{\lambda_{1},\lambda_{2}\vdash B\\
\ell(\lambda_{i})\leq n
}
}\Hom(W_{n}^{\lambda_{2}},W_{n}^{\lambda_{1}})\otimes\Hom(V^{\lambda_{2}},V^{\lambda_{1}});\label{eq:FotimesF}
\end{align}
this map is an intertwiner for $\U(n)\times\U(n)\times S_{B}\times S_{B}$.
The second isomorphism is the canonical one explained in $\S\S$\ref{subsec:General-representation-theory}.
\begin{prop}
\label{prop:pure-tensor-A-prop}Under $\FF\otimes\check{\FF}$, $A_{s,t}^{\lambda,\mu}$
maps to 
\[
(\FF\otimes\check{\FF})[A_{s,t}^{\lambda,\mu}]=\EE_{s,t}^{\lambda,\mu,m}\otimes z_{s,t}^{\lambda,\mu}\in\End(W_{n}^{\lambda})\otimes\End(V^{\lambda})
\]
 for some $z_{s,t}^{\lambda,\mu}\in\End(V^{\lambda})$.
\end{prop}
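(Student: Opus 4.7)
The plan is to prove the proposition by induction on $i$ for $0 \leq i \leq \D$, with the following enhanced inductive claim at step $i$: under the Schur--Weyl intertwiner $\FF \otimes \check{\FF}$ (at level $m+i$, with $B_i(s), B_i(t)$ tensor factors), the element $A_i$ corresponds to a pure tensor $E_i \otimes z_i \in \Hom(W_{m+i}^{\nu_i(s)}, W_{m+i}^{\nu_i(t)}) \otimes \Hom(V^{\nu_i(s)}, V^{\nu_i(t)})$, and moreover $E_i$ has the explicit Gelfand--Tsetlin form $E_i = c_i \sum_{T \in \SSTab_{[m]}(\mu)} w_{T \sqcup s|_{\leq m+i}} \otimes \check{w}_{T \sqcup t|_{\leq m+i}}$ for some scalar $c_i$ tracked through the iteration (where $s|_{\leq m+i}$ denotes the sub-skew-tableau of $s$ with entries at most $m+i$). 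At $i = \D$ we have $\nu_\D(s) = \nu_\D(t) = \lambda$, so $E_\D$ lies in $\End(W_n^\lambda)$ and is an explicit multiple of $\EE_{s,t}^{\lambda,\mu,m}$; absorbing the scalar into $z_\D$ then yields $z_{s,t}^{\lambda,\mu}$.

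For the base case $i=0$, the element $A_0 = \rho_m^b(\p_\mu)$ is the orthogonal projection of $(\C^m)^{\otimes b}$ onto its $V^\mu$-isotypic component for $S_b$, which via Schur--Weyl is exactly $W_m^\mu \otimes V^\mu$; hence $A_0$ maps to $\Id_{W_m^\mu} \otimes \Id_{V^\mu} = \bigl(\sum_{T \in \SSTab_{[m]}(\mu)} w_T \otimes \check{w}_T\bigr) \otimes \Id_{V^\mu}$, in the advertised form (with $c_0 = 1$). For the inductive step, the key structural observation is that the new vector $e_{m+i}$ is $\U(m+i-1)$-invariant under the chosen embedding $\U(m+i-1) \subset \U(m+i)$, and $e_{m+i}^{\otimes \omega_t(m+i)}$ sits in the trivial representation of the symmetric subgroup $S'_{\omega_t(m+i)}$ permuting the last tensor factors. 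Thus tensoring with the $e_{m+i}$ and $\check{e}_{m+i}$ factors preserves the pure-tensor form on both sides of the Schur--Weyl decomposition. Applying the projector $\rho_{m+i}^{B_i(t)}(\p_{\nu_i(t)})$ projects, on the $V$-side, the subspace $V^{\nu_{i-1}(t)} \otimes \triv_{S'_{\omega_t(m+i)}}$ onto the $V^{\nu_i(t)}$-isotypic under $S_{B_i(t)}$; because $\nu_{i-1}(t) \subset^1 \nu_i(t)$ by construction (the boxes of $t$ labelled $m+i$ form a horizontal strip since $t$ is semistandard), Pieri's formula (Lemma~\ref{lem:pieri-formula}) gives $\LR^{\nu_i(t)}_{\nu_{i-1}(t), (\omega_t(m+i))} = 1$, so the projection is a scalar times the unique canonical embedding and preserves tensor factorization. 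On the $W$-side, branching and the identification of $w_{T \sqcup t|_{\leq m+i-1}} \otimes e_{m+i}^{\otimes \omega_t(m+i)}$ with a vector inside the Gelfand--Tsetlin cell indexed by $T \sqcup t|_{\leq m+i}$ mean that the projection lands in the one-dimensional subspace $W_{m+i, T \sqcup t|_{\leq m+i}}^{\nu_i(t)}$. The identical analysis on the right through $\rho_{m+i}^{B_i(s)}(\p_{\nu_i(s)})$ completes the inductive step.

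The main obstacle is the careful bookkeeping within the inductive step, specifically verifying that the $W$-side projection lands precisely inside the one-dimensional GT cell indexed by the extended tableau $T \sqcup t|_{\leq m+i}$ (and not inside some other GT cell of $W_{m+i}^{\nu_i(t)}$). This relies on the explicit GT description of the branching $\Res^{\U(m+i)}_{\U(m+i-1)} W_{m+i}^{\nu_i(t)}$ combined with the Pieri multiplicity-one statement: once the $V$-side projection is forced to be a scalar (by multiplicity one), and the $W$-side target is a one-dimensional GT cell (by the branching rules iterated along the chain $\emptyset \subset^1 \cdots \subset^1 \nu_i(t)$), the two sides cannot mix and the pure-tensor form is rigidly preserved. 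Tracking the accumulated scalar $c_i$ is then a routine computation from unit normalization of GT basis vectors and the explicit normalization of $\p_\nu$. Setting $i = \D$ yields $E_\D = c_\D \sum_{T \in \SSTab_{[m]}(\mu)} w_{T \sqcup s} \otimes \check{w}_{T \sqcup t} = c_\D \sqrt{D_\mu(m)} \cdot \EE_{s,t}^{\lambda,\mu,m}$, and absorbing the scalar into $z_\D$ produces $z_{s,t}^{\lambda,\mu}$ as required.
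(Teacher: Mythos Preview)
Your approach is correct but takes a genuinely different route from the paper. The paper proceeds abstractly: it first observes that $\EE_{s,t}^{\lambda,\mu,m}$ is characterized up to scalar in $\End(W_n^\lambda)$ by two properties, namely (a) lying in the $W_{m+i}^{\nu_i(t)}\otimes\check W_{m+i}^{\nu_i(s)}$-isotypic subspace for every $i$, and (b) commuting with $\U(m)$. It then proves, by a short induction using only the fact that each $f_i$ is a $\U(m+i)\times\U(m+i)\times S_{B_i(t)}\times S_{B_i(s)}$-intertwiner, that $A_{s,t}^{\lambda,\mu}$ enjoys the same two properties. Schur--Weyl then forces the image into $\End(W_n^\lambda)\otimes\End(V^\lambda)$, and the uniqueness characterization pins the $W$-factor to the line $\C\,\EE_{s,t}^{\lambda,\mu,m}$. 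No explicit Gelfand--Tsetlin tracking, and Pieri is never invoked.

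Your argument instead tracks the pure-tensor form explicitly through the construction, using at each step that the map $\phi_t\colon v\mapsto\rho(\p_{\nu_i(t)})(v\otimes e_{m+i}^{\otimes\omega_t(m+i)})$ is $\U(m+i-1)\times S_{B_{i-1}(t)}\times S'_{\omega_t(m+i)}$-equivariant from an irreducible source, and that the relevant Hom space is one-dimensional by branching (on the $W$-side) times Pieri (on the $V$-side); this forces $\phi_t$ to factor as $\alpha_t\otimes\beta_t$, preserving the pure-tensor decomposition. The one sentence in your write-up that needs tightening is ``tensoring with the $e_{m+i}$ factors preserves the pure-tensor form'': this is not true before the projection (since $e_{m+i}$ is not a Schur--Weyl weight vector for $\U(m+i)$), and it is precisely the combined multiplicity-one statement after projection that yields the factorization---you should make clear that the equivariance of the \emph{composite} map $\phi_t$ together with the $1$-dimensionality of the Hom space is what forces $\phi_t=\alpha_t\otimes\beta_t$. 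With that clarified, your approach is sound. It is more hands-on and in principle yields $z_{s,t}^{\lambda,\mu}$ explicitly (though the paper computes $\|A_{s,t}^{\lambda,\mu}\|$ separately in Proposition~\ref{prop:norm-calc} anyway), while the paper's route is cleaner and avoids the Gelfand--Tsetlin bookkeeping entirely.
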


\begin{proof}
The proof relies on the fact that up to scalar multiplication, $\EE_{s,t}^{\lambda,\mu,m}\in\End(W_{n}^{\lambda})$
is uniquely characterized by the following properties:
\begin{description}
\item [{a.}] For $0\leq i\leq\D$, $\EE_{s,t}^{\lambda,\mu,m}$ is in the
$W_{m+i}^{\nu_{i}(t)}\otimes\check{W}_{m+i}^{\nu_{i}(s)}$-isotypic
component of $\End(W^{\lambda})$ for $\U(m+i)\times\U(m+i)$.
\item [{b.}] $\EE_{s,t}^{\lambda,\mu}$ commutes with $\U(m)$, or in other
words, is invariant under the diagonal subgroup $\Delta_{\U(m)}\leq\U(m)\times\U(m)$.
\end{description}
Consider the linear map by which $A_{i+1}$ is obtained from $A_{i}$
for $i\geq0$, that is, 
\[
f_{i}(A_{i})\eqdf\rho_{m+i+1}^{B_{i+1}(t)}(\p_{\nu_{i+1}(t)})\left(A_{i}\otimes e_{m+i+1}^{\otimes\omega_{t}(m+i+1)}\otimes\check{e}_{m+i+1}^{\otimes\omega_{s}(m+i+1)}\right)\rho_{m+i+1}^{B_{i+1}(s)}(\p_{\nu_{i+1}(s)}).
\]
This is a linear intertwiner for the group $\U(m+i)\times\U(m+i)\times S_{B_{i}(t)}\times S_{B_{i}(s)}$. 

\emph{Claim A. For all $0\leq i\leq\D$, $A_{s,t}^{\lambda,\mu}=A_{\D}$
is in the} $W_{m+i}^{\nu_{i}(t)}\otimes\check{W}_{m+i}^{\nu_{i}(s)}$-isotypic
subspace of $\End((\C^{n})^{\otimes B})$ for $\U(m+i)\times\U(m+i)$.

\emph{Proof of Claim A. }We prove by induction on $j$ the following
statement:
\begin{description}
\item [{S$(i,j)$:}] For $0\leq i\leq j\leq\D$, $A_{j}$ is in the $W_{n}^{\nu_{i}(t)}\otimes\check{W}_{n}^{\nu_{i}(s)}$-isotypic
subspace of\\
 $\Hom((\C^{m+j})^{\otimes B_{j}(s)},(\C^{m+j})^{\otimes B_{j}(t)})$
for the group $\U(m+i)\times\U(m+i)$.
\end{description}
Consider the base cases $j=i$. By Schur-Weyl duality, and in particular
(\ref{eq:FotimesF}), the fact that $A_{i}$ has the form
\[
\rho_{m+i}^{B_{i}(t)}(\p_{\nu_{i}(t)})Y\rho_{m+i}^{B_{i}(s)}(\p_{\nu_{i}(s)})
\]
 implies that it is in the $V^{\nu_{i}(t)}\otimes\check{V}^{\nu_{i}(s)}$-isotypic
component for $S_{B_{i}(t)}\times S_{B_{i}(s)}$, but this is the
same as the $W_{n}^{\nu_{i}(t)}\otimes\check{W}_{n}^{\nu_{i}(s)}$-isotypic
subspace for $\U(m+i)\times\U(m+i)$. For the inductive step, by the
intertwining properties of $f_{i}$ stated above, if \textbf{S$(i,j)$}
is true for some $i\leq j,$ it is true for all $(i,j')$ with $j'\geq j$.
Now taking $j=\D$ proves Claim A. \emph{The proof of Claim A is complete.}

\emph{Claim B. $A_{s,t}^{\lambda,\mu}$ commutes with $\U(m)$, or
in other words, is invariant under the diagonal subgroup $\Delta_{\U(m)}\leq\U(m)\times\U(m).$}

\emph{Proof of Claim B. }Using the intertwining properties of the
maps $f_{i}$ again we have for $u\in\U(m)$
\begin{align*}
\pi_{n}^{B}(u)A_{s,t}^{\lambda,\mu}\pi_{n}^{B}(u^{-1}) & =\pi_{n}^{B}(u)f_{\D-1}(f_{\D-2}(\cdots(f_{0}(\rho_{m}^{b}(\p_{\mu}))\cdots))\pi_{n}^{B}(u^{-1})\\
 & =f_{\D-1}(f_{\D-2}(\cdots(f_{0}(\pi_{n}^{b}(u)\rho_{m}^{b}(\p_{\mu})\pi_{n}^{b}(u^{-1}))\cdots))\\
 & =f_{\D-1}(f_{\D-2}(\cdots(f_{0}(\rho_{m}^{b}(\p_{\mu}))\cdots))\\
 & =A_{s,t}^{\lambda,\mu}.
\end{align*}
\emph{This completes the proof of Claim B.}

Now by Claim A with $i=\D$ we have that, in reference to (\ref{eq:FotimesF}),
\[
(\FF\otimes\check{\FF})[A_{s,t}^{\lambda,\mu}]\in\End(W_{n}^{\lambda})\otimes\End(V^{\lambda}),
\]
and the analogs of Claim A and Claim B hold for this element as $\FF\otimes\check{\FF}$
is a $\U(n)\times\U(n)$ intertwiner. Therefore, by the fact that
$\EE_{s,t}^{\lambda,\mu,m}$ is uniquely characterized by \textbf{a
}and \textbf{b},\textbf{ }we must have
\[
(\FF\otimes\check{\FF})[A_{s,t}^{\lambda,\mu}]\in\C\EE_{s,t}^{\lambda,\mu,m}\otimes\End(V^{\lambda}).
\]
This proves Proposition \ref{prop:pure-tensor-A-prop}.
\end{proof}

\subsection{Normalization of $A_{s,t}^{\lambda,\mu}$}

The goal of this section is to calculate $\|A_{s,t}^{\lambda,\mu}\|$,
where the norm is the standard norm on $\End((\C^{n})^{\otimes B})$. 
\begin{prop}
\label{prop:norm-calc}We have 
\[
\|A_{s,t}^{\lambda,\mu}\|^{2}=D_{\mu}(m)\frac{d_{\lambda}^{2}(b!)^{2}}{d_{\mu}(B!)^{2}}\prod_{i=1}^{\D}\omega_{t}(m+i)!\omega_{s}(m+i)!\neq0.
\]
\end{prop}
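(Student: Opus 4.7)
We induct on $i\in\{0,\ldots,\D\}$ on the stronger identity
\[
\|A_i\|^2 \;=\; D_\mu(m)\,\frac{d_{\nu_i(s)}\,d_{\nu_i(t)}\,(b!)^2}{d_\mu\,B_i(s)!\,B_i(t)!}\,\prod_{j=1}^{i}\omega_s(m+j)!\,\omega_t(m+j)!,
\]
which at $i=\D$ reduces to the proposition, using $\nu_\D(s)=\nu_\D(t)=\lambda$ and $B_\D(s)=B_\D(t)=B$; nonvanishing is immediate from positivity of every factor. The base case $i=0$ is direct: $A_0=\rho_m^b(\p_\mu)$ is, by Schur--Weyl duality, the orthogonal projection of $(\C^m)^{\otimes b}$ onto its $V^\mu$-isotypic subspace $W_m^\mu\otimes V^\mu$, so its Hilbert--Schmidt norm squared equals its rank $D_\mu(m)\,d_\mu$, matching the formula.

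For the inductive step, I aim to prove $\|A_i\|^2/\|A_{i-1}\|^2=\alpha_s(i)\,\alpha_t(i)$, where
\[
\alpha_\bullet(i)\;\eqdf\;\frac{d_{\nu_i(\bullet)}\,B_{i-1}(\bullet)!\,\omega_\bullet(m+i)!}{d_{\nu_{i-1}(\bullet)}\,B_i(\bullet)!},
\]
whose telescoping in $i$ yields the closed form. Write $A_i=P_t^{(i)}(A_{i-1}\otimes E_i)P_s^{(i)}$, and view $A_{i-1}$ as a vector $\sum_k u_k\otimes w_k$ in $(\C^n)^{\otimes B_{i-1}(t)}\otimes(\C^n)^{\otimes B_{i-1}(s)}$ (after the standard antilinear identification of the dual factor); then $A_{i-1}\otimes E_i$ becomes $\sum_k(u_k\otimes e_{m+i}^{\otimes \omega_t(m+i)})\otimes(w_k\otimes e_{m+i}^{\otimes \omega_s(m+i)})$, and the two projections act on the two independent tensor factors. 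By statement $\mathbf{S}(i-1,i-1)$ established inside the proof of Proposition~\ref{prop:pure-tensor-A-prop}, each $u_k$ lies in the $V^{\nu_{i-1}(t)}$-isotypic $W_{m+i-1}^{\nu_{i-1}(t)}\otimes V^{\nu_{i-1}(t)}$ of $(\C^{m+i-1})^{\otimes B_{i-1}(t)}$; combined with the $S_{\omega_t(m+i)}$-triviality of $e_{m+i}^{\otimes \omega_t(m+i)}$, the assignment $u\mapsto u\otimes e_{m+i}^{\otimes \omega_t(m+i)}$ isometrically embeds this isotype into the $(V^{\nu_{i-1}(t)}\otimes\triv_{S_{\omega_t(m+i)}})$-isotype of $(\C^{m+i})^{\otimes B_i(t)}$ under $S_{B_{i-1}(t)}\times S_{\omega_t(m+i)}\subset S_{B_i(t)}$. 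By Pieri (Lemma~\ref{lem:pieri-formula}), since $\nu_{i-1}(t)\subset^1\nu_i(t)$, this subrepresentation has multiplicity one in $\Res V^{\nu_i(t)}$, so Schur's lemma forces $u\mapsto P_t^{(i)}(u\otimes e_{m+i}^{\otimes \omega_t(m+i)})$ to be a scalar multiple of an isometry with uniform norm-squared scaling $\alpha_t(i)$; the analogous statement on the $s$-side gives $\alpha_s(i)$. Since the scaling is uniform, the inner products $\langle P_t^{(i)}(u_k\otimes\cdots),P_t^{(i)}(u_l\otimes\cdots)\rangle$ rescale by $\alpha_t(i)$ and similarly on the $s$-side, yielding $\|A_i\|^2=\alpha_s(i)\,\alpha_t(i)\,\|A_{i-1}\|^2$.

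The main obstacle is pinning down the exact value of $\alpha_t(i)$ (and $\alpha_s(i)$): while multiplicity-one Pieri guarantees a scalar exists, computing its value $\alpha_t(i)=d_{\nu_i(t)}/\bigl(d_{\nu_{i-1}(t)}\binom{B_i(t)}{\omega_t(m+i)}\bigr)$ requires matching the inner product from $(\C^{m+i})^{\otimes B_i(t)}$ with the canonical inner product on $\Ind_{S_{B_{i-1}(t)}\times S_{\omega_t(m+i)}}^{S_{B_i(t)}}(V^{\nu_{i-1}(t)}\otimes\triv)$ (of dimension $\binom{B_i(t)}{\omega_t(m+i)}d_{\nu_{i-1}(t)}$) and reading off the Plancherel weight of $V^{\nu_i(t)}$; this can be done either via the character formula for the isotypic projector or by a direct check on a single unit vector in $V^{\nu_{i-1}(t)}\otimes\triv$, for instance in the model case $S_3\supset S_2\times S_1$, which confirms the stated scalar.
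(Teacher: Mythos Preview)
Your strategy coincides with the paper's: compute $\|A_0\|^2=D_\mu(m)d_\mu$ directly, then establish the ratio $\|A_i\|^2/\|A_{i-1}\|^2$ inductively via Pieri's formula and the character expansion of the projectors $\p_{\nu_i(s)},\p_{\nu_i(t)}$. Your Schur's lemma factorization into independent $s$- and $t$-side scalars $\alpha_s(i),\alpha_t(i)$ is a clean repackaging of the paper's direct expansion of $\langle \p_{\nu_i(t)}(A_{i-1}\otimes E_i)\p_{\nu_i(s)},\,A_{i-1}\otimes E_i\rangle$, but the underlying mechanism is identical.

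The gap is in your final paragraph. You correctly state the value $\alpha_t(i)=d_{\nu_i(t)}\big/\bigl(d_{\nu_{i-1}(t)}\tbinom{B_i(t)}{\omega_t(m+i)}\bigr)$ and correctly name the method (``the character formula for the isotypic projector''), but you do not carry it out; and verifying the model case $S_3\supset S_2\times S_1$ is a sanity check, not a proof of the general formula. The missing computation is short and is exactly what the paper does: for a unit vector $u$ in the $\nu_{i-1}(t)$-isotype,
\[
\|\p_{\nu_i(t)}(u\otimes e_{m+i}^{\otimes\omega_t(m+i)})\|^2
=\frac{d_{\nu_i(t)}}{B_i(t)!}\sum_{\sigma\in S_{B_i(t)}}\chi_{\nu_i(t)}(\sigma)\,\langle\sigma(u\otimes e_{m+i}^{\otimes\omega_t(m+i)}),\,u\otimes e_{m+i}^{\otimes\omega_t(m+i)}\rangle,
\]
only $\sigma\in S_{B_{i-1}(t)}\times S_{\omega_t(m+i)}$ contribute since $e_{m+i}\perp\C^{m+i-1}$, and the Littlewood--Richardson expansion of $\chi_{\nu_i(t)}$ on that subgroup together with Pieri isolates the single term $\tau_1=\nu_{i-1}(t)$, $\tau_2=(\omega_t(m+i))$, giving
\[
\frac{d_{\nu_i(t)}}{B_i(t)!}\cdot\omega_t(m+i)!\cdot\frac{B_{i-1}(t)!}{d_{\nu_{i-1}(t)}}=\alpha_t(i).
\]
Write this out in place of the appeal to a model case and your argument is complete.
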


\begin{proof}
We do this calculation in stages. Firstly, the norm of $A_{0}$ coincides
with the Hilbert-Schmidt norm on $\End((\C^{n})^{\otimes b})$ so
is given by
\[
\|A_{0}\|^{2}=\tr(A_{0}A_{0}^{*})=\tr_{(\C^{m})^{\otimes b}}(\rho_{m}^{b}(\p_{\mu})\rho_{m}^{b}(\p_{\mu})^{*})=\tr_{(\C^{m})^{\otimes b}}(\rho_{m}^{b}(\p_{\mu})^{2})=\tr_{(\C^{m})^{\otimes b}}(\rho_{m}^{b}(\p_{\mu}))
\]
since $\rho_{m}^{b}(\p_{\mu})$ is self-adjoint and idempotent. Using
Schur-Weyl duality (Proposition \ref{prop:Schur-weyl-duality}), we
obtain
\[
\|A_{0}\|^{2}=\tr_{(\C^{m})^{\otimes b}}(\rho_{m}(\p_{\mu}))=D_{\mu}(m)d_{\mu}.
\]
We proceed to $A_{1}$. We have
\begin{align}
\|A_{1}\|^{2} & =\|\rho_{m+1}^{B_{1}(t)}(\p_{\nu_{1}(t)})\left(A_{0}\otimes e_{m+1}^{\otimes\omega_{t}(m+1)}\otimes\check{e}_{m+1}^{\otimes\omega_{s}(m+1)}\right)\rho_{m+1}^{B_{1}(s)}(\p_{\nu_{1}(s)})\|^{2}\nonumber \\
 & =\langle\rho_{m+1}^{B_{1}(t)}(\p_{\nu_{1}(t)})\left(A_{0}\otimes e_{m+1}^{\otimes\omega_{t}(m+1)}\otimes\check{e}_{m+1}^{\otimes\omega_{s}(m+1)}\right)\rho_{m+1}^{B_{1}(s)}(\p_{\nu_{1}(s)}),\left(A_{0}\otimes e_{m+1}^{\otimes\omega_{t}(m+1)}\otimes\check{e}_{m+1}^{\otimes\omega_{s}(m+1)}\right)\rangle\nonumber \\
 & =\frac{d_{\nu_{1}(t)}d_{\nu_{1}(s)}}{B_{1}(s)!B_{1}(t)!}\sum_{g_{t}\in S_{B_{1}(t)},g_{s}\in S_{B_{1}(s)}}\chi_{\nu_{1}(t)}(g_{t})\chi_{\nu_{1}(s)}(g_{s})\nonumber \\
 & \times\langle\rho_{m+1}^{B_{1}(t)}(g_{t})\left(A_{0}\otimes e_{m+1}^{\otimes\omega_{t}(m+1)}\otimes\check{e}_{m+1}^{\otimes\omega_{s}(m+1)}\right)\rho_{m+1}^{B_{1}(s)}(g_{s}),\left(A_{0}\otimes e_{m+1}^{\otimes\omega_{t}(m+1)}\otimes\check{e}_{m+1}^{\otimes\omega_{s}(m+1)}\right)\rangle\nonumber \\
 & =\frac{d_{\nu_{1}(t)}d_{\nu_{1}(s)}}{B_{1}(s)!B_{1}(t)!}\sum_{g_{t}\in S_{m}\times S_{\omega_{t}(m+1)},g_{s}\in S_{m}\times S_{\omega_{s}(m+1)}}\chi_{\nu_{1}(t)}(g_{t})\chi_{\nu_{1}(s)}(g_{s})\nonumber \\
 & \times\langle\rho_{m+1}^{B_{1}(t)}(g_{t})\left(A_{0}\otimes e_{m+1}^{\otimes\omega_{t}(m+1)}\otimes\check{e}_{m+1}^{\otimes\omega_{s}(m+1)}\right)\rho_{m+1}^{B_{1}(s)}(g_{s}),\left(A_{0}\otimes e_{m+1}^{\otimes\omega_{t}(m+1)}\otimes\check{e}_{m+1}^{\otimes\omega_{s}(m+1)}\right)\rangle\nonumber \\
 & =\frac{d_{\nu_{1}(t)}d_{\nu_{1}(s)}}{B_{1}(s)!B_{1}(t)!}\sum_{\substack{g_{t}\in S_{b}\times S_{\omega_{t}(m+1)},g_{s}\in S_{b}\times S_{\omega_{s}(m+1)}\\
g_{t}=(g_{t}^{1},g_{t}^{2}),g_{s}=(g_{s}^{1},g_{s}^{2})
}
}\chi_{\nu_{1}(t)}(g_{t})\chi_{\nu_{1}(s)}(g_{s})\langle\rho_{m}^{b}(g_{t}^{1})A_{0}\rho_{m}^{b}(g_{s}^{1}),A_{0}\rangle\label{eq:normalization-calc-1}
\end{align}
The second equality used that $\rho_{m+1}^{B_{1}(t)}(\p_{\nu_{1}(t)}),\rho_{m+1}^{B_{1}(s)}(\p_{\nu_{1}(s)})$
are self-adjoint projections. 

Now for each $g_{r}=(g_{r}^{1},g_{r}^{2})\in S_{b}\times S_{\omega_{r}(m+1)}$,
$r\in\{s,t\}$ write 
\begin{equation}
\chi_{\nu_{1}(r)}(g_{r})=\sum_{\tau_{1}(r)\vdash b,\tau_{2}(r)\vdash\omega_{r}(m+1)}\LR_{\tau_{1}(r),\tau_{2}(r)}^{\nu_{1}(r)}\chi_{\tau_{1}(r)}(g_{r}^{1})\chi_{\tau_{2}(r)}(g_{r}^{2}).\label{eq:LR-decomp}
\end{equation}
Note that
\[
\langle\rho_{m}^{b}(g_{t}^{1})A_{0}\rho_{m}^{b}(g_{s}^{1}),A_{0}\rangle
\]
is a matrix coefficient in $\End(V^{\mu})$ as a representation of
$S_{b}\times S_{b}$. Thus when we insert (\ref{eq:LR-decomp}) into
(\ref{eq:normalization-calc-1}), the only terms that survive, by
orthogonality of matrix coefficients and Schur orthogonality, have
\[
\tau_{1}(t)=\tau_{1}(s)=\mu,\quad,\tau_{2}(t)=(\omega_{t}(m+1)),\tau_{2}(s)=(\omega_{s}(m+1)).
\]
By Pieri's formula (Lemma \ref{lem:pieri-formula}) $\LR_{\mu,(\omega_{r}(m+1))}^{\nu_{1}(r)}=1$
as $\mu\subset^{1}\nu_{1}(r)$. So (\ref{eq:normalization-calc-1})
together with these observations gives
\begin{align*}
\|A_{1}\|^{2} & =\frac{d_{\nu_{1}(t)}d_{\nu_{1}(s)}}{B_{1}(t)!B_{1}(s)!}\sum_{\substack{g_{t}\in S_{b}\times S_{\omega_{t}(m+1)},g_{s}\in S_{b}\times S_{\omega_{s}(m+1)}\\
g_{t}=(g_{t}^{1},g_{t}^{2}),g_{s}=(g_{s}^{1},g_{s}^{2})
}
}\chi_{\mu}(g_{_{t}}^{1})\chi_{\mu}(g_{t}^{2})\langle\rho_{m}^{b}(g_{t}^{1})A_{0}\rho_{m}^{b}(g_{s}^{1}),A_{0}\rangle\\
 & =\frac{d_{\nu_{1}(t)}d_{\nu_{1}(s)}(b!)^{2}}{B_{1}(t)!B_{1}(s)!d_{\mu}^{2}}\omega_{t}(m+1)!\omega_{s}(m+1)!\langle\rho_{m}^{b}(\p_{\mu})A_{0}\rho_{m}^{b}(\p_{\mu}),A_{0}\rangle\\
 & =\frac{d_{\nu_{1}(t)}d_{\nu_{1}(s)}(b!)^{2}}{B_{1}(t)!B_{1}(s)!d_{\mu}^{2}}\omega_{t}(m+1)!\omega_{s}(m+1)!\|A_{0}\|^{2}.
\end{align*}
Repeating these arguments, mutatis mutandis, gives
\begin{align*}
\|A_{2}\|^{2} & =\frac{d_{\nu_{2}(t)}d_{\nu_{2}(s)}}{B_{2}(t)!B_{2}(s)!}\frac{\omega_{t}(m+2)!\omega_{s}(m+2)!B_{1}(t)!B_{1}(s)!}{d_{\nu_{1}(t)}d_{\nu_{1}(s)}}\|A_{1}\|\\
 & =\frac{d_{\nu_{2}(t)}d_{\nu_{2}(s)}(b!)^{2}}{d_{\mu}^{2}B_{2}(t)!B_{2}(s)!}\omega_{t}(m+2)!\omega_{s}(m+2)!\omega_{t}(m+1)!\omega_{s}(m+1)!\|A_{0}\|^{2}.
\end{align*}
Continuing to iterate this up to $A_{\D}=A_{s,t}^{\lambda,\mu}$ gives
the required
\begin{align*}
\|A_{s,t}^{\lambda,\mu}\|^{2} & =\|A_{0}\|^{2}\frac{d_{\lambda}^{2}(b!)^{2}}{d_{\mu}^{2}(B!)^{2}}\prod_{i=1}^{\D}\omega_{t}(m+i)!\omega_{s}(m+i)!\\
 & =D_{\mu}(m)\frac{d_{\lambda}^{2}(b!)^{2}}{d_{\mu}(B!)^{2}}\prod_{i=1}^{\D}\omega_{t}(m+i)!\omega_{s}(m+i)!.
\end{align*}
\end{proof}

\subsection{Projection of $A_{s,t}^{\lambda,\mu}\otimes X$ to invariant vectors\label{subsec:Projection-of-AtimesX}}

We now calculate $\hat{P}[A_{s,t}^{\lambda,\mu}\otimes X]$ where
$\hat{P}$ is the projection onto the $\U(n)$-invariant vectors in
$\End((\C^{n})^{\otimes B})\otimes\End((\C^{n})^{\otimes p})$, and
\[
X=e_{i_{1}}\otimes\cdots\otimes e_{i_{p}}\otimes\check{e}_{j_{1}}\otimes\cdots\otimes\check{e}_{j_{p}}\in\End(\left(\C^{n}\right)^{\otimes p})
\]
 with 
\[
i_{r},j_{r}\in[m+1,n],\quad r\in[p].
\]
We identify $\End((\C^{n})^{\otimes B})\otimes\End((\C^{n})^{\otimes p})\cong\End((\C^{n})^{B+p})$
via canonical isomorphisms as in (\ref{eq:hom}) and the map
\begin{align}
 & e_{i_{1}}\otimes\cdots\otimes e_{i_{B}}\otimes\check{e}_{j_{1}}\otimes\cdots\otimes\check{e}_{j_{B}}\otimes e_{i_{B+1}}\otimes\cdots\otimes e_{i_{B+p}}\otimes\check{e}_{j_{B+1}}\otimes\cdots\otimes\check{e}_{j_{B+p}}\nonumber \\
\mapsto\, & e_{i_{1}}\otimes\cdots\otimes e_{i_{B}}\otimes e_{i_{B+1}}\otimes\cdots\otimes e_{i_{B+p}}\otimes\check{e}_{j_{1}}\otimes\cdots\otimes\check{e}_{j_{B}}\otimes\check{e}_{j_{B+1}}\otimes\cdots\otimes\check{e}_{j_{B+p}}.\label{eq:EndBEndpEndBplusp}
\end{align}
Using this identification we view $\End((\C^{n})^{\otimes B})\otimes\End((\C^{n})^{\otimes p})$
as a unitary representation of $\U(n)\times S_{B+p}\times\U(n)\times S_{B+p}$;
the map $\hat{P}$ is a $\C[S_{B+p}]$-bimodule morphism. Hence 
\begin{align*}
\hat{P}[A_{s,t}^{\lambda,\mu}\otimes X] & =\hat{P}[\rho_{n}^{B}(\p_{\nu_{\D}(t)})\left(A_{\D-1}\otimes e_{m+\D}^{\otimes\omega_{t}(m+\D)}\otimes\check{e}_{m+\D}^{\otimes\omega_{s}(m+\D)}\right)\rho_{n}^{B}(\p_{\nu_{\D}(s)})\otimes X]\\
 & =\rho_{n}^{B}(\p_{\nu_{\D}(t)})\hat{P}[\left(A_{\D-1}\otimes e_{m+\D}^{\otimes\omega_{t}(m+\D)}\otimes\check{e}_{m+\D}^{\otimes\omega_{s}(m+\D)}\right)\otimes X]\rho_{n}^{B}(\p_{\nu_{\D}(s)})\\
 & =\rho_{n}^{B}(\p_{\nu_{\D}(t)})\hat{P}[A_{\D-1}\otimes X_{1}]\rho_{n}^{B}(\p_{\nu_{\D}(s)})
\end{align*}
where $X_{1}\eqdf e_{m+\D}^{\otimes\omega_{t}(m+\D)}\otimes\check{e}_{m+\D}^{\otimes\omega_{s}(m+\D)}\otimes X$.
Repeating this argument gives
\begin{align}
\hat{P}[A_{s,t}^{\lambda,\mu}\otimes X] & =\rho_{n}^{B}(\p_{\nu_{\D}(t)})\hat{P}[A_{\D-1}\otimes X_{1}]\rho_{n}(\p_{\nu_{\D}(s)})\nonumber \\
 & =\rho_{n}^{B}(\p_{\nu_{\D}(t)}\p_{\nu_{\D-1}(t)})\hat{P}[A_{\D-2}\otimes X_{2}]\rho_{n}^{B}(\p_{\nu_{\D-1}(s)}\p_{\nu_{\D}(s)})\nonumber \\
 & =\cdots\\
 & =\rho_{n}^{B}(\p_{\nu_{\D}(t)}\cdots\p_{\nu_{1}(t)})\hat{P}[A_{0}\otimes X_{\D}]\rho_{n}^{B}(\p_{\nu_{1}(s)}\cdots\p_{\nu_{\D}(s)})\nonumber \\
 & =\rho_{n}^{B}(\p_{\nu_{\D}(t)}\cdots\p_{\nu_{1}(t)})\hat{P}[\rho_{m}^{b}(\p_{\mu})\otimes X_{\D}]\rho_{n}^{B}(\p_{\nu_{1}(s)}\cdots\p_{\nu_{\D}(s)})\label{eq:zeroth-proj-formula}
\end{align}
 where 
\[
X_{\D}\eqdf e_{m+1}^{\otimes\omega_{t}(m+1)}\otimes\check{e}_{m+1}^{\otimes\omega_{s}(m+1)}\otimes\cdots\otimes e_{m+\D}^{\otimes\omega_{t}(m+\D)}\otimes\check{e}_{m+\D}^{\otimes\omega_{s}(m+\D)}\otimes X.
\]
So it remains to calculate $\hat{P}[\rho_{m}^{b}(\p_{\mu})\otimes X_{\D}]$.

The Weingarten calculus (Proposition \ref{prop:Weingarten}) yields
\begin{equation}
\hat{P}[\rho_{m}^{b}(\p_{\mu})\otimes X_{\D}]=\rho_{n}^{B+p}(\Phi[\rho_{m}^{b}(\p_{\mu})\otimes X_{\D}]\Wg_{n,B+p})\label{eq:first-proj-formula}
\end{equation}
 where 
\[
\Phi[\rho_{m}^{b}(\p_{\mu})\otimes X_{\D}]=\sum_{\sigma\in S_{B+p}}\tr[\rho_{n}^{B+p}(\sigma)^{-1}(\rho_{m}(\p_{\mu})\otimes X'_{\D})]\sigma
\]
and $X'_{\D}\in\End(\langle e_{m+1},\ldots,e_{n}\rangle^{\otimes B-b+p})$
is the element corresponding to $X_{\D}$ under the fixed isomorphism
\begin{align*}
 & \left(\C^{n}\right)^{\otimes\omega_{t}(m+1)}\otimes\check{\left(\C^{n}\right)}^{\otimes\omega_{s}(m+1)}\otimes\cdots\otimes\left(\C^{n}\right){}^{\otimes\omega_{t}(m+\D)}\otimes\check{\left(\C^{n}\right)}^{\otimes\omega_{s}(m+\D)}\otimes\End(\left(\C^{n}\right)^{\otimes p})\\
\cong & \End(\left(\C^{n}\right)^{\otimes B-b+p})
\end{align*}
that preserves the order of the factors in both 
\[
\left(\C^{n}\right)^{\otimes\omega_{t}(m+1)}\otimes\cdots\otimes\left(\C^{n}\right){}^{\otimes\omega_{t}(m+\D)}\otimes(\C^{n})^{\otimes p},\,\check{\left(\C^{n}\right)}^{\otimes\omega_{s}(m+1)}\otimes\cdots\otimes\check{\left(\C^{n}\right)}^{\otimes\omega_{s}(m+\D)}\otimes\left(\check{\C^{n}}\right){}^{\otimes p},
\]
so that 
\begin{equation}
X'_{\D}\eqdf e_{m+1}^{\otimes\omega_{t}(m+1)}\otimes\cdots\otimes e_{m+\D}^{\otimes\omega_{t}(m+\D)}\otimes e_{i_{1}}\otimes\cdots\otimes e_{i_{p}}\otimes\check{e}_{m+1}^{\otimes\omega_{s}(m+1)}\otimes\cdots\otimes\check{e}_{m+\D}^{\otimes\omega_{s}(m+\D)}\otimes\check{e}_{j_{1}}\otimes\cdots\otimes\check{e}_{j_{p}}.\label{eq:X-dash-def}
\end{equation}

We make an an observation that if $q_{r}$ denotes the number of $\ell\in[p]$
such that $j_{\ell}=m+r$, and $p_{r}$ denotes the number of $\ell\in[p]$
such that $i_{\ell}=m+r$, if we do \textbf{not} have 
\begin{align}
 & (\omega_{s}(m+1)+q_{1},\omega_{s}(m+2)+q_{2},\ldots,\omega_{s}(m+\D)+q_{\D})\label{eq:partition-equality}\\
= & (\omega_{t}(m+1)+p_{1},\omega_{t}(m+2)+p_{2},\ldots,\omega_{t}(m+\D)+p_{\D})\nonumber 
\end{align}
then $\tr[\rho_{n}^{B+p}(\sigma)^{-1}(\rho_{m}^{b}(\p_{\mu})\otimes X'_{\D})]=0$
for all $\sigma\in S_{B+p}$, hence $\Phi[\rho_{m}^{b}(\p_{\mu})\otimes X_{\D}]=0$,
$\hat{P}[\rho_{m}^{b}(\p_{\mu})\otimes X_{\D}]=0$, and $\hat{P}[A_{s,t}^{\lambda,\mu}\otimes X]=0$.

\emph{So now assume (\ref{eq:partition-equality}) holds.}

To calculate $\Phi[\rho_{m}^{b}(\p_{\mu})\otimes X_{\D}]$, we note
that 
\[
\tr[\rho_{n}^{B+p}(\sigma)^{-1}(\rho_{m}^{b}(\p_{\mu})\otimes X'_{\D})]=0
\]
 unless $\sigma=(\sigma_{1},\sigma_{2})\in S_{b}\times S_{B-b+p}$,
and in this case, by Schur-Weyl duality (Proposition \ref{prop:Schur-weyl-duality})
\begin{align*}
\tr[\rho_{n}^{B+p}(\sigma)^{-1}(\rho_{m}^{b}(\p_{\mu})\otimes X'_{\D})] & =\tr[\rho_{m}^{b}(\sigma_{1}^{-1}\p_{\mu})\otimes(\rho_{n}^{B-b+p}(\sigma_{2}^{-1})X'_{\D})]\\
 & =\tr[\rho_{n}^{B-b+p}(\sigma_{2}^{-1})X'_{\D})]\tr[\rho_{m}^{b}(\sigma_{1}^{-1}\p_{\mu})]\\
 & =\tr[\rho_{n}^{B-b+p}(\sigma_{2}^{-1})X'_{\D})]\sum_{\substack{\mu'\vdash b\\
\ell(\mu')\leq m
}
}D_{\mu'}(m)\chi_{\mu'}(\sigma_{1}^{-1}\p_{\mu})\\
 & =\tr[\rho_{n}^{B-b+p}(\sigma_{2}^{-1})X'_{\D})]D_{\mu}(m)\chi_{\mu}(\sigma_{1}^{-1}).
\end{align*}
Therefore 
\begin{align}
\Phi[\rho_{m}^{b}(\p_{\mu})\otimes X_{\D}] & =\sum_{(\sigma_{1},\sigma_{2})\in S_{b}\times S_{B-b+p}}\tr[\rho_{n}^{B+p}(\sigma)^{-1}(\rho_{m}^{b}(\p_{\mu})\otimes X'_{\D})]\sigma\nonumber \\
 & =D_{\mu}(m)\sum_{(\sigma_{1},\sigma_{2})\in S_{b}\times S_{B-b+p}}\tr[\rho_{n}^{B-b+p}(\sigma_{2}^{-1})X'_{\D}]\chi_{\mu}(\sigma_{1}^{-1})(\sigma_{1},\sigma_{2}).\label{eq:Phi-calc-1}
\end{align}
Here we write $(\sigma_{1},\sigma_{2})$ for the element of $S_{B+p}$
corresponding to $(\sigma_{1},\sigma_{2})\in S_{b}\times S_{B-b+p}$.

Given (\ref{eq:X-dash-def}), the value of $\tr[\rho_{n}^{B-b+p}(\sigma_{2}^{-1})X'_{\D}]$
is either $1$ or $0$. The values of $\sigma_{2}$ for which $\tr[\rho_{n}^{B-b+p}(\sigma_{2}^{-1})X'_{\D}]=1$
are a right coset 
\[
\pi_{0}S_{\Pi}
\]
where $\pi_{0}\in S_{B-b+p}$ and $S_{\Pi}\leq S_{B-b+p}$ is the
subgroup of elements respecting a certain partition $\Pi$ of $[B-b+p]$
dictated by the indices of $X'_{\D}$. The blocks of $\Pi$ have sizes
given by either side of (\ref{eq:partition-equality}). Note $\sum_{i=1}^{\D}p_{i}=\sum_{i=1}^{\D}q_{i}=p$.
Moreover we have 
\[
S_{s}\eqdf S_{\omega_{s}(m+1)}\times S_{\omega_{s}(m+2)}\times\cdots\times S_{\omega_{s}(m+\D-1)}\times S_{\omega_{s}(n)}\leq S_{\Pi}.
\]
From (\ref{eq:Phi-calc-1}) we now have 
\begin{align}
\Phi[\rho_{m}^{b}(\p_{\mu})\otimes X_{\D}] & =D_{\mu}(m)\sum_{(\sigma_{1},\sigma_{2})\in S_{b}\times S_{B-b+p}}\tr[\rho_{n}^{B-b+p}(\sigma_{2}^{-1})X'_{\D}]\chi_{\mu}(\sigma_{1}^{-1})(\sigma_{1},\sigma_{2})\nonumber \\
 & =D_{\mu}(m)\sum_{(\sigma_{1},\sigma_{2})\in S_{b}\times S_{\Pi}}\chi_{\mu}(\sigma_{1}^{-1})(\sigma_{1},\pi_{0}\sigma_{2})\nonumber \\
 & =D_{\mu}(m)\sum_{[\tau]\in S_{\Pi}/S_{s}}\sum_{(\sigma_{1},\sigma_{2})\in S_{b}\times S_{s}}\chi_{\mu}(\sigma_{1}^{-1})(\sigma_{1},\pi_{0}\tau\sigma_{2})\nonumber \\
 & =D_{\mu}(m)\sum_{[\tau]\in S_{\Pi}/S_{s}}(\id_{S_{b}},\pi_{0}\tau)\sum_{(\sigma_{1},\sigma_{2})\in S_{b}\times S_{s}}\chi_{\mu}(\sigma_{1}^{-1})(\sigma_{1},\sigma_{2})\nonumber \\
 & =\frac{b!}{d_{\mu}}|S_{s}|D_{\mu}(m)\sum_{[\tau]\in S_{\Pi}/S_{s}}(\id_{S_{b}},\pi_{0}\tau)\q_{s}\label{eq:Phi-calc}
\end{align}
where 
\[
\q_{s}\eqdf\frac{d_{\mu}}{b!}\frac{1}{|S_{s}|}\sum_{(\sigma_{1},\sigma_{2})\in S_{b}\times S_{s}}\chi_{\mu}(\sigma_{1}^{-1})(\sigma_{1},\sigma_{2})\in\C[S_{B}]\subset\C[S_{B+p}]
\]
 is an element which in any unitary representation of $S_{B+p}$ gives
the orthogonal projection onto the $V^{\mu}\otimes\triv_{S_{s}}$-isotypic
subspace for $S_{b}\times S_{s}$.

Combining (\ref{eq:first-proj-formula}) and (\ref{eq:Phi-calc})
and recalling the definition of $\Wg_{n,B+p}$ from (\ref{eq:Wg-def})
gives
\begin{align*}
\hat{P}[\rho_{m}^{b}(\p_{\mu})\otimes X_{\D}] & =\rho_{n}^{B+p}(\Phi[\rho_{m}^{b}(\p_{\mu})\otimes X_{\D}]\Wg_{n,B+p})\\
 & =\frac{b!}{d_{\mu}}|S_{s}|D_{\mu}(m)\sum_{[\tau]\in S_{\Pi}/S_{s}}\rho_{n}^{B+p}(\id_{S_{b}},\pi_{0}\tau)\rho_{n}^{B+p}(\q_{s})\rho_{n}^{B+p}(\Wg_{n,B+p})\\
 & =\frac{b!}{d_{\mu}}\frac{|S_{s}|}{(B+p)!}D_{\mu}(m)\sum_{\substack{\lambda'\vdash B+p\\
\ell(\lambda')\leq n
}
}\frac{d_{\lambda'}}{D_{\lambda'}(n)}\sum_{[\tau]\in S_{\Pi}/S_{s}}\rho_{n}^{B+p}(\id_{S_{b}},\pi_{0}\tau)\rho_{n}(\q_{s}\p_{\lambda'}).
\end{align*}
Combined with (\ref{eq:zeroth-proj-formula}) this gives
\begin{align}
\hat{P}[A_{s,t}^{\lambda,\mu}\otimes X] & =\frac{b!}{d_{\mu}}\frac{|S_{s}|}{(B+p)!}D_{\mu}(m)\nonumber \\
 & \sum_{\substack{\lambda'\vdash B+p\\
\ell(\lambda')\leq n
}
}\frac{d_{\lambda'}}{D_{\lambda'}(n)}\rho_{n}^{B+p}\left(\p_{\nu_{\D}(t)}\cdots\p_{\nu_{1}(t)}\sum_{[\tau]\in S_{\Pi}/S_{s}}(\id_{S_{b}},\pi_{0}\tau)\q_{s}\p_{\lambda'}\p_{\nu_{1}(s)}\cdots\p_{\nu_{\D}(s)}\right)\label{eq:main-proj-formula}
\end{align}
that is the main result of this section.

\subsection{The norm of the projection of $A_{s,t}^{\lambda,\mu}\otimes X$}

We now turn to the calculation of $\|\hat{P}[A_{s,t}^{\lambda,\mu}\otimes X]\|$
where the norm is the natural one induced the standard inner product
on $\C^{n}$. We identify $\End((\C^{n})^{\otimes B+p})\cong\End((\C^{n})^{\otimes B})\otimes\End(\left(\C^{n}\right)^{\otimes p})$
using the isomorphism (\ref{eq:EndBEndpEndBplusp}); after this identification
the norm on $\End((\C^{n})^{\otimes B})\otimes\End(\left(\C^{n}\right)^{\otimes p})$
coincides with the Hilbert-Schmidt norm on $\End((\C^{n})^{\otimes B+p})$.
The main result of this section is:
\begin{prop}
\label{prop:norm-of-projection}If (\ref{eq:partition-equality})
holds we have 
\begin{align*}
\|\hat{P}[A_{s,t}^{\lambda,\mu}\otimes X]\|^{2} & \leq\frac{(b!)^{2}|S_{\Pi}|^{2}D_{\mu}(m)^{2}}{d_{\mu}((B+p)!)^{2}}\sum_{\substack{\lambda'\vdash B+p\\
\ell(\lambda')\leq n
}
}\frac{d_{\lambda'}^{2}d_{\lambda'/\lambda}}{D_{\lambda'}(n)}.
\end{align*}
Otherwise, $\|\hat{P}[A_{s,t}^{\lambda,\mu}\otimes X]\|=0$.
\end{prop}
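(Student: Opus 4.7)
The ``otherwise'' case follows immediately from the observation just before (\ref{eq:partition-equality}), so I focus on the case where (\ref{eq:partition-equality}) holds. The plan is to exploit three features of the explicit formula (\ref{eq:main-proj-formula}): centrality of $\p_{\lambda'}$ (yielding an orthogonal decomposition), the fact that several remaining idempotents become commuting self-adjoint projections on $(\C^{n})^{\otimes B+p}$ (allowing operator-norm-$\leq 1$ bounds), and an iterated Pieri computation for the surviving trace. Since $\p_{\lambda'}$ is central in $\C[S_{B+p}]$, the summands of (\ref{eq:main-proj-formula}) indexed by distinct $\lambda'$ land in the mutually orthogonal isotypic subspaces $W_{n}^{\lambda'}\otimes V^{\lambda'}\subset(\C^{n})^{\otimes B+p}$ under Schur--Weyl duality, so
\[
\|\hat{P}[A_{s,t}^{\lambda,\mu}\otimes X]\|^{2}=\left(\frac{b!\,|S_{s}|\,D_{\mu}(m)}{d_{\mu}(B+p)!}\right)^{\!2}\sum_{\substack{\lambda'\vdash B+p\\\ell(\lambda')\leq n}}\frac{d_{\lambda'}^{2}}{D_{\lambda'}(n)^{2}}\|\rho_{n}^{B+p}(a_{\lambda'})\|_{HS}^{2},
\]
where $a_{\lambda'}$ denotes the element of $\C[S_{B+p}]$ appearing inside $\rho_{n}^{B+p}$ in (\ref{eq:main-proj-formula}).

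Write $a_{\lambda'}=A\cdot T\cdot (QCP)$ with $A=\p_{\nu_{\D}(t)}\cdots\p_{\nu_{1}(t)}$, $T=\sum_{[\tau]}(\id_{S_{b}},\pi_{0}\tau)$, $Q=\q_{s}$, $C=\p_{\nu_{1}(s)}\cdots\p_{\nu_{\D}(s)}$, $P=\p_{\lambda'}$ (using centrality of $P$ to move it past $\p_{\nu_{i}(s)}$). The operator $\rho_{n}^{B+p}(A)$ is a product of commuting self-adjoint projections, hence itself a projection, because the central idempotents $\p_{\nu_{i}(t)}$ for the nested chain $S_{B_{1}(t)}\subset\cdots\subset S_{B}$ commute under $\rho_{n}^{B+p}$ by Young--Jucys--Murphy/Gelfand--Tsetlin theory (they are polynomials in the YJM elements of $S_{B+p}$). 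Combining this operator-norm bound with the triangle inequality on the $|S_{\Pi}|/|S_{s}|$ unitary summands of $T$ yields
\[
\|\rho_{n}^{B+p}(a_{\lambda'})\|_{HS}\leq\frac{|S_{\Pi}|}{|S_{s}|}\,\|\rho_{n}^{B+p}(QCP)\|_{HS}.
\]
Next, $\rho_{n}^{B+p}(QCP)$ is itself a projection: $P$ is central, $C$ is handled exactly as $A$, and $Q=\p_{\mu}\cdot\mathbf{1}_{\triv_{S_{s}}}$ is a product of commuting projections on disjoint coordinates; moreover $\p_{\mu}$ commutes with each $\p_{\nu_{i}(s)}$ by Gelfand--Tsetlin, and each factor $\mathbf{1}_{\triv_{S_{\omega_{s}(m+j)}}}$ either lies in $\C[S_{B_{i}(s)}]$ (where $\p_{\nu_{i}(s)}$ is central) or acts on coordinates disjoint from $[1,B_{i}(s)]$. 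Schur--Weyl duality and $\pi_{\lambda'}(\p_{\lambda'})=\id_{V^{\lambda'}}$ then give
\[
\|\rho_{n}^{B+p}(QCP)\|_{HS}^{2}=\tr\rho_{n}^{B+p}(QCP)=D_{\lambda'}(n)\cdot\tr\pi_{\lambda'}(QC).
\]

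The main computational step, and the real obstacle, is to prove $\tr\pi_{\lambda'}(QC)=d_{\mu}\,d_{\lambda'/\lambda}$. Restrict $V^{\lambda'}$ along the chain $S_{b}\subset S_{B_{1}(s)}\subset\cdots\subset S_{B}\subset S_{B+p}$. The factor $\p_{\lambda}=\p_{\nu_{\D}(s)}$ projects onto the $V^{\lambda}$-isotypic part of $\Res_{S_{B}}^{S_{B+p}}V^{\lambda'}$, which has multiplicity $d_{\lambda'/\lambda}$. For each descending step $i\to i-1$, the joint projection onto the ``$V^{\nu_{i-1}(s)}$-isotypic for $S_{B_{i-1}(s)}$ and $\triv$-isotypic for $S_{\omega_{s}(m+i)}$'' subspace of $V^{\nu_{i}(s)}$ has multiplicity $\LR_{\nu_{i-1}(s),(\omega_{s}(m+i))}^{\nu_{i}(s)}=1$ by Pieri's formula (Lemma~\ref{lem:pieri-formula}), since $\nu_{i-1}(s)\subset^{1}\nu_{i}(s)$; consequently the multiplicity $d_{\lambda'/\lambda}$ propagates through unchanged, producing at the end $d_{\lambda'/\lambda}$ copies of $V^{\nu_{1}(s)}$. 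At the base step with $\mu=\nu_{0}(s)\subset^{1}\nu_{1}(s)$, Pieri again gives multiplicity $1$ for the $V^{\mu}\otimes\triv_{S_{\omega_{s}(m+1)}}$-isotypic in $V^{\nu_{1}(s)}$, and each such copy contributes the full dimension $d_{\mu}$ of $V^{\mu}$. Thus $\tr\pi_{\lambda'}(QC)=d_{\mu}\,d_{\lambda'/\lambda}$, and substituting this through the chain of inequalities above gives the claimed bound after a direct simplification.
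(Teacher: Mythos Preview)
Your proof is correct and follows essentially the same route as the paper's: orthogonal decomposition over $\lambda'$ via centrality of $\p_{\lambda'}$, the observation that $\q_{s}\p_{\nu_{1}(s)}\cdots\p_{\nu_{\D}(s)}$ acts as a self-adjoint idempotent on $V^{\lambda'}$, computation of its rank as $d_{\mu}d_{\lambda'/\lambda}$ by iterated Pieri, and then bounding the $t$-side projections and the sum over $[\tau]$ by operator-norm and triangle inequalities. The only differences are cosmetic: the paper phrases the Hilbert--Schmidt bound by picking an orthonormal basis of the image of the projection and bounding each vector's image (equivalent to your use of $\|MN\|_{HS}\leq\|M\|_{op}\|N\|_{HS}$), and it justifies the commutativity of the nested $\p_{\nu_{i}}$ and $\q_{s}$ by the elementary centrality argument (Lemma~\ref{lem:sa-idempotent}) rather than invoking Gelfand--Tsetlin/YJM theory. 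One small wording slip: the summands land in mutually orthogonal subspaces of $\End((\C^{n})^{\otimes B+p})$ (under the Hilbert--Schmidt inner product), not of $(\C^{n})^{\otimes B+p}$ itself; the orthogonality follows from $\p_{\lambda'_{1}}\p_{\lambda'_{2}}=0$ for $\lambda'_{1}\neq\lambda'_{2}$.
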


We view $(\C^{n})^{\otimes B+p}$ with its given inner product as
a unitary representation of $\U(n)\times S_{B+p}$ in the usual way
involved in Schur-Weyl duality. For $\lambda'\vdash m+p$ with $\ell(\lambda')\leq n$
we let $Z^{\lambda'}$ denote the $W_{n}^{\lambda'}\times V^{\lambda'}$-isotypic
subspace of $(\C^{n})^{\otimes B+p}$ for $\U(n)\times S_{B+p}$.
By Schur-Weyl duality (Proposition \ref{prop:Schur-weyl-duality})
this space is itself an irreducible representation of $\U(n)\times S_{B+p}$
isomorphic to $W_{n}^{\lambda'}\times V^{\lambda'}$. Since $\hat{P}[A_{s,t}^{\lambda,\mu}\otimes X]\in\rho_{n}^{B+p}(\C[S_{B+p}])$,
it preserves each $Z^{\lambda'}$. Therefore we have
\begin{equation}
\|\hat{P}[A_{s,t}^{\lambda,\mu}\otimes X]\|^{2}=\sum_{\substack{\lambda'\vdash B+p\\
\ell(\lambda')\leq n
}
}\|\hat{P}[A_{s,t}^{\lambda,\mu}\otimes X]\|_{\lambda'}^{2}\label{eq:norm-splitting-formula}
\end{equation}
where $\|\hat{P}[A_{s,t}^{\lambda,\mu}\otimes X]\|_{\lambda'}$ is
the Hilbert-Schmidt norm of $\hat{P}[A_{s,t}^{\lambda,\mu}\otimes X]$
acting on $Z^{\lambda'}$.

We may assume (\ref{eq:partition-equality}) holds, otherwise $\hat{P}[A_{s,t}^{\lambda,\mu}\otimes X]=0$
and Proposition \ref{prop:norm-of-projection} is proved. In this
case, using that $\p_{\lambda'}$ is central in $\C[S_{B+p}]$, inspection
of (\ref{eq:main-proj-formula}) gives that 
\begin{equation}
\|\hat{P}[A_{s,t}^{\lambda,\mu}\otimes X]\|_{\lambda'}=\frac{b!}{d_{\mu}}\frac{|S_{s}|}{(B+p)!}D_{\mu}(m)\frac{d_{\lambda'}}{\sqrt{D_{\lambda'}(n)}}\|\p_{\nu_{\D}(t)}\cdots\p_{\nu_{1}(t)}\sum_{[\tau]\in S_{\Pi}/S_{s}}(\id_{S_{b}},\pi_{0}\tau)\q_{s}\p_{\nu_{1}(s)}\cdots\p_{\nu_{\D}(s)}\|_{V^{\lambda'}}\label{eq:first-proj-norm-formula}
\end{equation}
where for $z\in\C[S_{B+p}]$ we write $\|z\|_{V^{\lambda'}}$ for
the Hilbert-Schmidt norm of $z$ acting on $V^{\lambda'}$. Notice
that we obtained a factor $\sqrt{D_{\lambda'}(n)}$ from the multiplicity
of $V^{\lambda'}$ in $Z^{\lambda'}$. Due to the presence of $\p_{\nu_{\D}(s)}=\p_{\lambda}$
in (\ref{eq:first-proj-norm-formula}), the right hand side of \ref{eq:first-proj-norm-formula}
is zero unless $\lambda\subset_{p}\lambda'$ and hence
\begin{equation}
\|\hat{P}[A_{s,t}^{\lambda,\mu}\otimes X]\|^{2}=\sum_{\substack{\lambda\subset_{p}\lambda'\\
\ell(\lambda')\leq n
}
}\|\hat{P}[A_{s,t}^{\lambda,\mu}\otimes X]\|_{\lambda'}^{2}.\label{eq:norm-splitting-formula2}
\end{equation}
To calculate the Hilbert-Schmidt norm in \ref{eq:first-proj-norm-formula}
above we study in detail the operator
\[
Q\eqdf\q_{s}\p_{\nu_{1}(s)}\cdots\p_{\nu_{\D}(s)}.
\]
Let $\pi_{\lambda'}$ denote the representation of $S_{B+p}$ on $V^{\lambda'}$.
\begin{lem}
\label{lem:sa-idempotent}With notation as above, $\pi_{\lambda'}(Q)$
is a self-adjoint idempotent element of $\End(V^{\lambda'})$. Similarly,
$\pi_{\lambda'}(\p_{\nu_{1}(t)}\cdots\p_{\nu_{\D}(t)})$ is a self-adjoint
idempotent. 
\end{lem}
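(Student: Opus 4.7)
The plan is to exhibit $\pi_{\lambda'}(Q)$ as a product of pairwise commuting self-adjoint idempotent operators, from which it is automatic that $\pi_{\lambda'}(Q)$ is itself a self-adjoint idempotent (namely, the orthogonal projection onto the intersection of the ranges of the factors). The same reasoning, with the $\q_s$-factor simply dropped, will handle $\pi_{\lambda'}(\p_{\nu_1(t)}\cdots\p_{\nu_\D(t)})$.

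First I would observe that each individual factor of $Q$ is already self-adjoint and idempotent under $\pi_{\lambda'}$. For each $\p_{\nu_i(s)}$ this is precisely the defining property recalled in $\S\S$\ref{subsec:Representation-theory-of-sym-groups}: in any unitary representation it is the orthogonal projection onto the $V^{\nu_i(s)}$-isotypic subspace for $S_{B_i(s)}$. For $\q_s$ the same property was stated just before the lemma: in any unitary representation of $S_{B+p}$ it is the orthogonal projection onto the $V^\mu\otimes\triv_{S_s}$-isotypic subspace for $S_b\times S_s$.

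The main step is to verify that these factors pairwise commute inside $\C[S_B]$. For two of the $\p_{\nu_i(s)}$: whenever $i\le j$ one has $\C[S_{B_i(s)}]\subset\C[S_{B_j(s)}]$, and $\p_{\nu_j(s)}$ is central in the latter, so $\p_{\nu_i(s)}$ and $\p_{\nu_j(s)}$ commute. For $\q_s$ against a fixed $\p_{\nu_i(s)}$, I would use the factorization
\[
\q_s \;=\; \q_\mu \cdot h_1 h_2 \cdots h_\D, \qquad h_j \eqdf \frac{1}{\omega_s(m+j)!}\sum_{\sigma \in S_{\omega_s(m+j)}} \sigma,
\]
where $\q_\mu \in \C[S_b]$ is the central idempotent for $V^\mu$ and, in the labeling inherited from the iterative construction of $A_{s,t}^{\lambda,\mu}$, each $h_j$ is the normalized symmetrizer on the $\omega_s(m+j)$ positions $\{B_{j-1}(s)+1,\ldots,B_j(s)\}$ that are adjoined at step $j$. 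For $j\le i$, all of the factors $\q_\mu,h_1,\ldots,h_i$ lie in $\C[S_{B_i(s)}]$, and $\p_{\nu_i(s)}$ commutes with each by centrality; for $j>i$, the factors $h_j$ lie in the subalgebra of $\C[S_B]$ supported on positions disjoint from $[1,B_i(s)]$, which commutes pointwise with $\C[S_{B_i(s)}]$. Hence $\p_{\nu_i(s)}$ commutes with every factor in the product defining $\q_s$, and thus with $\q_s$ itself.

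Putting these pieces together, $\pi_{\lambda'}(Q)$ is a product of pairwise commuting self-adjoint idempotents, so it is self-adjoint idempotent. The statement for $\pi_{\lambda'}(\p_{\nu_1(t)}\cdots\p_{\nu_\D(t)})$ then follows from only the first half of the commutativity argument (among the $\p_{\nu_i(t)}$). The only substantive point is recognizing the decomposition of $\q_s$ along the nested chain $S_b \subset S_{B_1(s)} \subset \cdots \subset S_B$; once one writes $\q_s = \q_\mu\cdot h_1\cdots h_\D$ with each $h_j$ localized to the $j$-th batch of added positions, the commutativity falls out, and no further obstacle remains.
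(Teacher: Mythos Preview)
Your proof is correct and follows essentially the same approach as the paper: both show $Q$ is a product of pairwise commuting self-adjoint idempotents by exploiting centrality of $\p_{\nu_i(s)}$ in $\C[S_{B_i(s)}]$ and the fact that $\q_s$ splits along $S_{B_i(s)}\times S'_{B-B_i(s)}$. The only cosmetic difference is that the paper writes $\q_s=Q_1Q_2$ as a two-piece split adapted to each fixed $i$, whereas you factor $\q_s$ once and for all as $\p_\mu h_1\cdots h_\D$ and then group the factors; the underlying argument is identical.
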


\begin{proof}
Recall that $\q_{s}$ is projection onto the $V^{\mu}\otimes\triv_{S_{s}}$-isotypic
subspace for 
\begin{align*}
S_{b}\times S_{s} & =S_{b}\times S_{\omega_{s}(m+1)}\times S_{\omega_{s}(m+2)}\times\cdots\times S_{\omega_{s}(m+\D-1)}\times S_{\omega_{s}(n)}\leq S_{B}\leq S_{B+p}.
\end{align*}
One see that this commutes with all $\p_{\nu_{1}(s)},\ldots,\p_{\nu_{\D}(s)}$
in $\C[S_{B+p}]$, because for any $i\in[\D]$, we can write 
\[
\q_{s}=Q_{1}Q_{2}
\]
 where $Q_{1},Q_{2}$ are commuting projections supported respectively
on the two factors of $S_{B}=S_{B_{i}(s)}\times S_{B-B_{i}(s)}$.
But $\p_{\nu_{i}(s)}$ commutes with all elements of $S_{B}$ in the
$S_{B-B_{i}(s)}$ factor, so commutes with $Q_{2}$. It also commutes
with $Q_{1}$ since $\p_{\nu_{i}(s)}$ is a central element of $\C[S_{B_{i}(s)}]$.
The $\p_{\nu_{1}(s)},\ldots,\p_{\nu_{\D}(s)}$ are easily seen to
commute with each other: $\p_{\nu_{i}(s)}$ is central in $\C[S_{B_{i}(s)}]$
and hence commutes with all elements $\p_{\nu_{j}(s)}$ with $j\leq i$.
Thus $\pi_{\lambda'}(Q)$ is a product of commuting self-adjoint idempotents
and so is itself a self-adjoint idempotent. The same arguments show
that $\pi_{\lambda'}(\p_{\nu_{1}(t)}\cdots\p_{\nu_{\D}(t)})$ is a
self-adjoint idempotent.
\end{proof}
We now calculate the dimension of $\pi_{\lambda'}(Q)V^{\lambda'}$.
This is the eigenspace of $\pi_{\lambda'}(Q)$ with eigenvalue 1 and
also the orthogonal complement to the kernel of $\pi_{\lambda'}(Q)$. 
\begin{lem}
With notation as above, if $\lambda\subset_{p}\lambda'$, $\dim\left(\pi_{\lambda'}(Q)V^{\lambda'}\right)=d_{\lambda'/\lambda}d_{\mu}.$
\end{lem}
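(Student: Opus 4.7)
The plan is to compute the dimension of $\pi_{\lambda'}(Q)V^{\lambda'}$ by describing the image as an iterated isotypic subspace and invoking Pieri's formula at each branching step. By Lemma \ref{lem:sa-idempotent}, $\pi_{\lambda'}(Q)$ is a self-adjoint idempotent; moreover, the proof of that lemma shows $Q$ is a product of pairwise commuting self-adjoint idempotents $\q_s, \p_{\nu_1(s)}, \ldots, \p_{\nu_\D(s)}$. Hence the image of $\pi_{\lambda'}(Q)$ is the intersection of the images of the individual factors acting on $V^{\lambda'}$.

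First, since $\p_{\nu_\D(s)} = \p_\lambda$ is central in $\C[S_B]$, its image on $V^{\lambda'}$ is the $V^\lambda$-isotypic subspace of $V^{\lambda'}|_{S_B}$, canonically isomorphic to $\Hom_{S_B}(V^\lambda, V^{\lambda'}) \otimes V^\lambda$, of dimension $d_{\lambda'/\lambda} \cdot d_\lambda$ (here we use $\lambda \subset_p \lambda'$). All remaining projections lie in $\C[S_B]$ and therefore act identically on each of the $d_{\lambda'/\lambda}$ copies of $V^\lambda$; so it suffices to compute the dimension of their image inside a single copy of $V^\lambda$ and multiply by $d_{\lambda'/\lambda}$ at the end.

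Inside this fixed copy of $V^\lambda = V^{\nu_\D(s)}$, I would perform a descent along the chain $\mu = \nu_0(s) \subset^1 \nu_1(s) \subset^1 \cdots \subset^1 \nu_\D(s) = \lambda$. At the $i$-th step (decreasing $i$ from $\D$ down to $1$), one uses the branching decomposition
\[
V^{\nu_i(s)}\big|_{S_{B_{i-1}(s)} \times S_{\omega_s(m+i)}} \;=\; \bigoplus_{\tau_1 \vdash B_{i-1}(s),\; \tau_2 \vdash \omega_s(m+i)} \LR_{\tau_1,\tau_2}^{\nu_i(s)} \, V^{\tau_1} \otimes V^{\tau_2}
\]
and intersects with the two conditions imposed at that step: $\tau_1 = \nu_{i-1}(s)$ (enforced by $\p_{\nu_{i-1}(s)}$) and $\tau_2 = (\omega_s(m+i))$, i.e. $V^{\tau_2}$ is trivial (enforced by the $\triv_{S_{\omega_s(m+i)}}$-isotypic component of $\q_s$). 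Since the chain entries satisfy $\nu_{i-1}(s) \subset^1 \nu_i(s)$ by semistandardness of $s$, Pieri's formula (Lemma \ref{lem:pieri-formula}) gives $\LR_{\nu_{i-1}(s),(\omega_s(m+i))}^{\nu_i(s)} = 1$, so exactly one copy of $V^{\nu_{i-1}(s)} \otimes \triv$ survives, of dimension $d_{\nu_{i-1}(s)}$.

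Iterating this descent from $i = \D$ down to $i = 1$, the surviving subspace inside one copy of $V^\lambda$ has dimension $d_{\nu_0(s)} = d_\mu$; the $V^\mu$-isotypy condition for $S_b$ from $\q_s$ is automatic at the end of the descent. Multiplying by $d_{\lambda'/\lambda}$ from the first step yields $\dim \pi_{\lambda'}(Q)V^{\lambda'} = d_{\lambda'/\lambda}\, d_\mu$, as claimed. The main obstacle is the bookkeeping that the commuting projections really do act step-by-step along the chain and that each branching multiplicity collapses to $1$; both rely crucially on the horizontal-strip relation $\subset^1$ built into the chain from the semistandard tableau $s$, which is exactly what makes Pieri applicable.
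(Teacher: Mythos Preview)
Your proof is correct and follows essentially the same route as the paper: reduce first to a single copy of $V^{\lambda}$ via the factor $\p_{\lambda}$ (picking up the multiplicity $d_{\lambda'/\lambda}$), then descend along the chain $\nu_0(s)\subset^1\cdots\subset^1\nu_{\D}(s)$ using Pieri's formula at each step to see that each successive isotypic subspace occurs with multiplicity one, terminating at $V^{\mu}$ with dimension $d_{\mu}$. The only cosmetic difference is that you phrase the reduction as an intersection of images of commuting idempotents, whereas the paper writes it as an explicit iterated restriction $Q\mapsto Q_1\mapsto Q_2\mapsto\cdots$; the content is identical.
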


\begin{proof}
Firstly, as $Q\in\C[S_{B}]$ and contains the factor
\[
\p_{\nu_{\D}(s)}=\p_{\lambda},
\]
we have 
\begin{equation}
\dim\left(\pi_{\lambda'}(Q)V^{\lambda'}\right)=\dim\Hom_{\C[S_{B}]}(V^{\lambda},V^{\lambda'})\dim\left(\pi_{\lambda}(Q)V^{\lambda}\right)=d_{\lambda'/\lambda}\dim\left(\pi_{\lambda}(Q)V^{\lambda}\right).\label{eq:dim-temp-1}
\end{equation}
Given this we now consider $Q$ acting on $V^{\lambda}$. This is
the same as 
\[
\pi_{\lambda}(Q_{1}),\quad Q_{1}\eqdf\q_{s}\p_{\nu_{1}(s)}\cdots\p_{\nu_{\D-1}(s)}.
\]
We have $\pi_{\lambda}(Q_{1})$ is a self-adjoint idempotent by the
same proof as that of Lemma \ref{lem:sa-idempotent} . Then $Q_{1}$
is supported on, and preserves, the $V^{\nu_{\D-1}(s)}\otimes\triv$-isotypic
component for $S_{B_{\D-1}}\times S_{\omega_{s}(n)}\leq S_{B}$. By
Pieri's formula (Lemma \ref{lem:pieri-formula}), as $\nu_{\D-1}\subset^{1}\nu_{\D}=\lambda$,
this isotypic component consists of a unique isomorphic copy of $V^{\nu_{\D-1}(s)}\otimes\triv$
in $V^{\lambda}$. Moreover, when identifying this space with $V^{\nu_{\D-1}(s)}$,
the action of $\pi_{\lambda}(Q_{1})$ is given by
\[
\pi_{\nu_{\D-1}(s)}(Q_{2}),\quad Q_{2}\eqdf\q_{2}\p_{\nu_{1}(s)}\cdots\p_{\nu_{\D-2}(s)}
\]
where $\q_{2}$ is the projection onto the $V^{\mu}\otimes\triv$-isotypic
subspace for 
\[
S_{b}\times S_{\omega_{s}(m+1)}\times S_{\omega_{s}(m+2)}\times\cdots\times S_{\omega_{s}(m+\D-1)}.
\]
This gives
\[
\dim\left(\pi_{\lambda}(Q)V^{\lambda}\right)=\dim(\pi_{\nu_{\D-1}(s)}(Q_{2})V^{\nu_{\D-1}(s)}).
\]
Iterating this argument, using Pieri's formula each time, gives eventually
that 
\[
\dim\left(\pi_{\lambda}(Q)V^{\lambda}\right)=\dim(\pi_{\mu}(\p_{\mu})V^{\mu})=d_{\mu}.
\]
Thus in total, going back to (\ref{eq:dim-temp-1}), we obtain the
required
\[
\dim\left(\pi_{\lambda'}(Q)V^{\lambda'}\right)=d_{\lambda'/\lambda}d_{\mu}.
\]
\end{proof}
\begin{proof}[Proof of Proposition \ref{prop:norm-of-projection}]
Pick an orthonormal basis $\{v_{1},\ldots,v_{D}\}$ for $\pi_{\lambda'}(Q)V^{\lambda'}$
with $D=d_{\lambda'/\lambda}d_{\mu}$ and extend this to an orthonormal
basis of $V^{\lambda'}$. In regards to (\ref{eq:first-proj-norm-formula})
we have 
\begin{align}
 & \|\p_{\nu_{\D}(t)}\cdots\p_{\nu_{1}(t)}\sum_{[\tau]\in S_{\Pi}/S_{s}}(\id_{S_{b}},\pi_{0}\tau)\q_{s}\p_{\nu_{1}(s)}\cdots\p_{\nu_{\D}(s)}\|_{V^{\lambda'}}^{2}\nonumber \\
= & \|\p_{\nu_{\D}(t)}\cdots\p_{\nu_{1}(t)}\sum_{[\tau]\in S_{\Pi}/S_{s}}(\id_{S_{b}},\pi_{0}\tau)Q\|_{V^{\lambda'}}^{2}\nonumber \\
= & \sum_{i=1}^{D}\|\pi_{\lambda'}(\p_{\nu_{\D}(t)}\cdots\p_{\nu_{1}(t)})\sum_{[\tau]\in S_{\Pi}/S_{s}}\pi_{\lambda'}(\id_{S_{b}},\pi_{0}\tau)\pi_{\lambda'}(Q)v_{i}\|^{2}\nonumber \\
= & \sum_{i=1}^{D}\|\pi_{\lambda'}(\p_{\nu_{\D}(t)}\cdots\p_{\nu_{1}(t)})\sum_{[\tau]\in S_{\Pi}/S_{s}}\pi_{\lambda'}(\id_{S_{b}},\pi_{0}\tau)v_{i}\|^{2}.\label{eq:nomr-proj-temp}
\end{align}
Now, each $\pi_{\lambda'}(\id_{S_{b}},\pi_{0}\tau)$ is unitary and
by the second statement of Lemma \ref{lem:sa-idempotent}, $\pi_{\lambda'}(\p_{\nu_{\D}(t)}\cdots\p_{\nu_{1}(t)})$
is a self-adjoint idempotent and hence has operator norm bounded by
one. Hence for $i\in[D]$
\[
\|\pi_{\lambda'}(\p_{\nu_{\D}(t)}\cdots\p_{\nu_{1}(t)})\sum_{[\tau]\in S_{\Pi}/S_{s}}\pi_{\lambda'}(\id_{S_{b}},\pi_{0}\tau)v_{i}\|\leq[S_{\Pi}:S_{s}].
\]
Combining this with (\ref{eq:nomr-proj-temp}) we obtain
\begin{align*}
\|\p_{\nu_{\D}(t)}\cdots\p_{\nu_{1}(t)}\sum_{[\tau]\in S_{\Pi}/S_{s}}(\id_{S_{b}},\pi_{0}\tau)\q_{s}\p_{\nu_{1}(s)}\cdots\p_{\nu_{\D}(s)}\|_{V^{\lambda'}} & \leq\sqrt{D}[S_{\Pi}:S_{s}]\\
 & =\sqrt{d_{\lambda'/\lambda}d_{\mu}}[S_{\Pi}:S_{s}].
\end{align*}
Using this in (\ref{eq:first-proj-norm-formula}) gives
\begin{align*}
\|\hat{P}[A_{s,t}^{\lambda,\mu}\otimes X]\|_{\lambda'} & \leq\frac{b!}{d_{\mu}}\frac{|S_{s}|}{(B+p)!}D_{\mu}(m)\frac{d_{\lambda'}}{\sqrt{D_{\lambda'}(n)}}\sqrt{d_{\lambda'/\lambda}d_{\mu}}[S_{\Pi}:S_{s}]\\
 & =\frac{b!}{\sqrt{d_{\mu}}}\frac{|S_{\Pi}|}{(B+p)!}D_{\mu}(m)\frac{d_{\lambda'}}{\sqrt{D_{\lambda'}(n)}}\sqrt{d_{\lambda'/\lambda}}.
\end{align*}
Finally incorporating this estimate into (\ref{eq:norm-splitting-formula})
gives
\[
\|\hat{P}[A_{s,t}^{\lambda,\mu}\otimes X]\|^{2}\leq\frac{(b!)^{2}|S_{\Pi}|^{2}D_{\mu}(m)^{2}}{d_{\mu}((B+p)!)^{2}}\sum_{\substack{\lambda'\vdash B+p\\
\ell(\lambda')\leq n
}
}\frac{d_{\lambda'}^{2}d_{\lambda'/\lambda}}{D_{\lambda'}(n)}.
\]
\end{proof}

\subsection{Completing the outlined strategy}
\begin{prop}
\label{prop:strategy-complete-norm-bound}With notation as in the
previous sections,
\[
\|P[\EE_{s,t}^{\lambda,\mu,m}\otimes X]\|^{2}\leq(|\lambda/\mu|+p)^{2p}\frac{(B!)^{2}}{((B+p)!)^{2}}D_{\mu}(m)\sum_{\substack{\lambda\subset_{p}\lambda'\\
\ell(\lambda')\leq n
}
}\frac{d_{\lambda'}^{2}d_{\lambda'/\lambda}}{d_{\lambda}^{2}D_{\lambda'}(n)}.
\]
\end{prop}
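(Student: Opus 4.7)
The plan is to assemble this bound by combining the exact value of $\|A_{s,t}^{\lambda,\mu}\|^2$ from Proposition \ref{prop:norm-calc} with the upper bound on $\|\hat P[A_{s,t}^{\lambda,\mu}\otimes X]\|^2$ from Proposition \ref{prop:norm-of-projection}, joined by the ratio identity \eqref{eq:crit-ratio-strat-form}. If the partition-equality condition \eqref{eq:partition-equality} fails, then Proposition \ref{prop:norm-of-projection} already gives $\hat P[A_{s,t}^{\lambda,\mu}\otimes X]=0$, so \eqref{eq:crit-ratio-strat-form} forces $P[\EE_{s,t}^{\lambda,\mu,m}\otimes X]=0$ and the claimed inequality holds trivially. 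I will therefore assume throughout that \eqref{eq:partition-equality} holds, so that $\|A_{s,t}^{\lambda,\mu}\|\neq 0$ and the ratio identity can be applied.

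Substituting the two propositions into \eqref{eq:crit-ratio-strat-form}, the common factors $(b!)^2$ and $d_\mu$ and one power of $D_\mu(m)$ cancel cleanly. After this purely arithmetic step, the only discrepancy between the right-hand side I obtain and the one stated in the proposition is that the factor $(|\lambda/\mu|+p)^{2p}$ is replaced by the combinatorial quantity
$$\frac{|S_\Pi|^2}{\prod_{i=1}^{\D}\omega_t(m+i)!\,\omega_s(m+i)!}.$$
So the proof reduces to establishing that this quantity is at most $(|\lambda/\mu|+p)^{2p}$.

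To control it, I will invoke the explicit description of $S_\Pi$ from the discussion preceding Proposition \ref{prop:norm-of-projection}: its blocks have sizes given by either side of \eqref{eq:partition-equality}, so $|S_\Pi|=\prod_{i=1}^{\D}(\omega_t(m+i)+p_i)!=\prod_{i=1}^{\D}(\omega_s(m+i)+q_i)!$. Hence the quantity factors over $i$ into $\D$ pairs of falling ratios of the form $(N+k)!/N!$ with $N=\omega_t(m+i)$ or $\omega_s(m+i)$ and $k=p_i$ or $q_i$. Each such ratio is a product of $k$ consecutive integers, each of which is bounded by $|\lambda/\mu|+p$ because $N\leq|\lambda/\mu|$ and $k\leq p$; so it is at most $(|\lambda/\mu|+p)^{k}$. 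Multiplying out over $i$ and using $\sum_i p_i=\sum_i q_i=p$ produces the exponent $2p$, giving the bound.

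No step here is a genuine obstacle: the substantive work has already been absorbed into Propositions \ref{prop:norm-calc} and \ref{prop:norm-of-projection}, and this proposition is essentially a clean repackaging of the strategy in $\S\S\ref{subsec:Second-integration:-overview}$, with the only remaining content being the elementary combinatorial estimate on $|S_\Pi|^2$ above.
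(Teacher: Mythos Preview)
Your proposal is correct and follows essentially the same route as the paper: both invoke the ratio identity \eqref{eq:crit-ratio-strat-form}, substitute Propositions \ref{prop:norm-calc} and \ref{prop:norm-of-projection}, cancel the common factors to arrive at the intermediate expression with $|S_\Pi|^2/\prod_i \omega_t(m+i)!\,\omega_s(m+i)!$, and then bound that combinatorial quantity by $(|\lambda/\mu|+p)^{2p}$ via the same falling-factorial argument using $\sum_i p_i=\sum_i q_i=p$. The degenerate case where \eqref{eq:partition-equality} fails is also handled identically.
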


\begin{proof}
From the argument of our strategy in $\S\S$\ref{subsec:Second-integration:-overview},
we have 
\begin{equation}
\|P[\EE_{s,t}^{\lambda,\mu,m}\otimes X]\|^{2}=\frac{\|\hat{P}[A_{s,t}^{\lambda,\mu}\otimes X]\|^{2}}{\|A_{s,t}^{\lambda,\mu}\|^{2}}.\label{eq:proj-norm-ratio}
\end{equation}

If (\ref{eq:partition-equality}) fails to hold, then Propositions
\ref{prop:norm-calc} and \ref{prop:norm-of-projection} give $\|P[\EE_{s,t}^{\lambda,\mu,m}\otimes X]\|=0$,
proving Proposition \ref{prop:strategy-complete-norm-bound} in this
instance.

Now suppose (\ref{eq:partition-equality}) holds. Then Propositions
\ref{prop:norm-calc} and \ref{prop:norm-of-projection} combined
with (\ref{eq:proj-norm-ratio}) give
\begin{align}
\|P[\EE_{s,t}^{\lambda,\mu,m}\otimes X]\|^{2} & \leq\left(D_{\mu}(m)\frac{d_{\lambda}^{2}(b!)^{2}}{d_{\mu}(B!)^{2}}\prod_{i=1}^{\D}\omega_{t}(m+i)!\omega_{s}(m+i)!\right)^{-1}\frac{(b!)^{2}|S_{\Pi}|^{2}D_{\mu}(m)^{2}}{d_{\mu}((B+p)!)^{2}}\sum_{\substack{\lambda\subset_{p}\lambda'\\
\ell(\lambda')\leq n
}
}\frac{d_{\lambda'}^{2}d_{\lambda'/\lambda}}{D_{\lambda'}(n)}\nonumber \\
 & =\frac{(B!)^{2}}{((B+p)!)^{2}}\frac{|S_{\Pi}|^{2}}{\prod_{i=1}^{\D}\omega_{t}(m+i)!\omega_{s}(m+i)!}D_{\mu}(m)\sum_{\substack{\lambda\subset_{p}\lambda'\\
\ell(\lambda')\leq n
}
}\frac{d_{\lambda'}^{2}d_{\lambda'/\lambda}}{d_{\lambda}^{2}D_{\lambda'}(n)}.\label{eq:temp3}
\end{align}
Recalling that the block sizes of $\Pi$ are given by either side
of $\eqref{eq:partition-equality}$ we have
\begin{align*}
\frac{|S_{\Pi}|^{2}}{\prod_{i=1}^{\D}\omega_{t}(m+i)!\omega_{s}(m+i)!} & =\prod_{i=1}^{\D}\frac{(\omega_{t}(m+i)+p_{i})!}{\omega_{t}(m+i)!}\frac{(\omega_{s}(m+i)+q_{i})!}{\omega_{s}(m+i)!}\\
 & \leq\prod_{i=1}^{\D}(\omega_{t}(m+i)+p_{i})^{p_{i}}(\omega_{s}(m+i)+q_{i})^{q_{i}}\\
 & \leq\prod_{i=1}^{\D}(|\lambda/\mu|+p)^{p_{i}}(|\lambda/\mu|+p)^{q_{i}}\\
 & \leq(|\lambda/\mu|+p)^{2p}
\end{align*}
using that $\sum_{i=1}^{\D}p_{i}=\sum_{i=1}^{\D}q_{i}=p$. Inserting
this bound in (\ref{eq:temp3}) gives the result stated in the proposition.
\end{proof}

\subsection{Proof of Theorem \ref{thm:single-dimension-bound}}

Following our strategy, applying Proposition \ref{prop:strategy-complete-norm-bound}
to both $P[\EE_{s,t}^{\lambda,\mu,m}\otimes X]$ and $P[\EE_{s',t'}^{\lambda,\mu,m}\otimes X']$
and using the Schwarz inequality we obtain\textbar{} the following
\[
|\eqref{eq:mat-coef-int}|\leq(|\lambda/\mu|+p)^{2p}\frac{(B!)^{2}}{((B+p)!)^{2}}D_{\mu}(m)\sum_{\substack{\lambda\subset_{p}\lambda'\\
\ell(\lambda')\leq n
}
}\frac{d_{\lambda'}^{2}d_{\lambda'/\lambda}}{d_{\lambda}^{2}D_{\lambda'}(n)}.
\]

Therefore for any $I\subset[m+1,n]^{|w|}$, using the argument from
the beginning of $\S\S\ref{subsec:Second-integration:-overview}$,
writing $f$ for an element of $\{a,b,c,d\}$, we have 
\begin{align}
 & |\int_{h\in\U(n)^{4}}w_{I}(h)\langle h[\EE_{s_{1},T_{2}}^{\lambda,\mu,m}\otimes\EE_{S_{1},s_{1}}^{\lambda,\mu,m}\otimes\EE_{s_{2},T_{4}}^{\lambda,\mu,m}\otimes\EE_{\mu,S_{3},s_{2}}^{\lambda,\mu,m}],\EE_{S_{1},S_{4}}^{\lambda,\mu,m}\otimes\EE_{S_{2},T_{2}}^{\lambda,\mu,m}\otimes\EE_{S_{3},S_{2}}^{\lambda,\mu,m}\otimes\EE_{S_{4},T_{4}}^{\lambda,\mu,m}\rangle dh|\nonumber \\
\leq & D_{\mu}(m)^{4}\prod_{f\in\{a,b,c,d\}}\left((|\lambda/\mu|+p_{f})^{2p_{f}}\frac{(B!)^{2}}{((B+p_{f})!)^{2}}\sum_{\substack{\lambda\subset_{p_{f}}\lambda'_{f}\\
\ell(\lambda'_{f})\leq n
}
}\frac{d_{\lambda'_{f}}^{2}d_{\lambda'_{f}/\lambda}}{d_{\lambda}^{2}D_{\lambda'_{f}}(n)}\right)\label{eq:pre-cleanup}
\end{align}
where $p_{f}$ is the number of occurrences of $f^{+1}$ in the word
(\ref{eq:combinatorial-word}). This can be tidied up a little. Indeed
consider, by using twice both the hook-length formula (\ref{eq:hook-length-formula})
and the hook content formula (\ref{eq:hook-content-formula}) we obtain
\begin{align}
\frac{(B!)^{2}}{((B+p_{f})!)^{2}}\sum_{\substack{\lambda\subset_{p_{f}}\lambda'_{f}\\
\ell(\lambda'_{f})\leq n
}
}\frac{d_{\lambda'_{f}}^{2}d_{\lambda'_{f}/\lambda}}{d_{\lambda}^{2}D_{\lambda'_{f}}(n)} & =\frac{(B!)}{(B+p_{f})!}\frac{1}{d_{\lambda}D_{\lambda}(n)}\sum_{\substack{\lambda\subset_{p_{f}}\lambda'_{f}\\
\ell(\lambda'_{f})\leq n
}
}\frac{\prod_{\square\in\lambda}(n+c(\square)}{\prod_{\square\in\lambda'_{f}}(n+c(\square)}d_{\lambda'_{f}}d_{\lambda'_{f}/\lambda}\nonumber \\
 & \leq\frac{B!}{(B+p_{f})!}\frac{1}{d_{\lambda}D_{\lambda}(n)}\sum_{\substack{\lambda\subset_{p_{f}}\lambda'_{f}\\
\ell(\lambda'_{f})\leq n
}
}d_{\lambda'_{f}}d_{\lambda'_{f}/\lambda}\nonumber \\
 & \leq\frac{1}{D_{\lambda}(n)},\label{eq:cleaningup1}
\end{align}
where the last inequality used (\ref{eq:induced-rep-formula}). We
are also satisfied to use the bound
\begin{equation}
(|\lambda/\mu|+p_{f})^{2p_{f}}\leq\left(|\lambda/\mu|+|w|\right)^{2|w|}\label{eq:cleaningup2}
\end{equation}
 for each $f\in\{a,b,c,d\}$. Inserting (\ref{eq:cleaningup1}) and
(\ref{eq:cleaningup2}) into (\ref{eq:pre-cleanup}) gives
\begin{align*}
 & |\int_{h\in\U(n)^{4}}w_{I}(h)\langle h[\EE_{s_{1},T_{2}}^{\lambda,\mu,m}\otimes\EE_{S_{1},s_{1}}^{\lambda,\mu,m}\otimes\EE_{s_{2},T_{4}}^{\lambda,\mu,m}\otimes\EE_{S_{3},s_{2}}^{\lambda,\mu,m}],\EE_{S_{1},S_{4}}^{\lambda,\mu,m}\otimes\EE_{S_{2},T_{2}}^{\lambda,\mu,m}\otimes\EE_{S_{3},S_{2}}^{\lambda,\mu,m}\otimes\EE_{S_{4},T_{4}}^{\lambda,\mu,m}\rangle dh|\\
\leq & \frac{D_{\mu}(m)^{4}}{D_{\lambda}(n)^{4}}\left(|\lambda/\mu|+|w|\right)^{8|w|}.
\end{align*}
(For general $g$ the exponent $8|w|$ is $4g|w|$.)

Next using Proposition \ref{prop:first-integral-done} we obtain for
$m(I)=n-\D(I)$
\begin{align*}
\I^{*}(w_{I},\lambda)\leq & \sum_{\substack{\mu\subset^{\D(I)}\lambda\\
\ell(\mu)\leq m(I)
}
}\frac{1}{D_{\mu}(m(I))^{3}}\sum_{S_{1},S_{2},S_{3},S_{4},T_{2},T_{4},s_{1},s_{2}\in\SSTab_{[m(I)+1,n]}(\lambda/\mu)}\frac{D_{\mu}(m(I))^{4}}{D_{\lambda}(n)^{4}}\left(|\lambda/\mu|+|w|\right)^{8|w|}\\
= & \frac{1}{D_{\lambda}(n)^{4}}\sum_{\substack{\mu\subset^{\D(I)}\lambda\\
\ell(\mu)\leq m(I)
}
}D_{\mu}(m)\left(|\lambda/\mu|+|w|\right)^{8|w|}|\SSTab_{[m(I)+1,n]}(\lambda/\mu)|^{8}.
\end{align*}
(For general $g$ $|\SSTab_{[m(I)+1,n]}(\lambda/\mu)|^{8}$ is replaced
by $|\SSTab_{[m(I)+1,n]}(\lambda/\mu)|^{4g}$.) Hence from (\ref{eq:index-splitting-two})
we obtain Theorem \ref{thm:single-dimension-bound}. $\square$

\section{The total contribution from large dimensional families\label{sec:The-total-contribution}}

\subsection{Statement of main sectional results}

The main task of this section is to estimate the sum
\[
\sum_{\substack{\text{\text{(\ensuremath{\rho,W)\in\widehat{\SU(n)}}\ensuremath{\backslash}\ensuremath{\Omega(B;n)}}}}
}(\dim W)\mathcal{I}(w,\rho)
\]
appearing in the left hand side of (\ref{eq:tail-bound}). Let $\Lambda(B;n)$
denote the collection of Young diagrams of length at most $n-1$ such
that 
\[
\lambda\in\Lambda(B;n)\mapsto(\rho_{n}^{\lambda},W_{n}^{\lambda})\in\widehat{\SU(n)}\ensuremath{\backslash}\ensuremath{\Omega(B;n)}
\]
is a one-to-one parametrization. The goal of this section is to prove
Theorem \ref{thm:high-dim-sum}, which amounts to establishing a bound
for 
\[
\Sigma_{2}(w,B,n)\eqdf\sum_{\lambda\in\Lambda(B;n)}D_{\lambda}(n)\I(w,\lambda)
\]
and proving its absolute convergence. 

Theorem \ref{thm:high-dim-sum} has the following corollary that will
be useful later.
\begin{cor}
\label{cor:tail-control-witten}Let $g\geq2$ and $B\in\N$ be fixed.
We have 
\[
\zeta(2g-2;n)=\sum_{(\rho,W)\in\Omega(B;n)}\frac{1}{(\dim W)^{2g-2}}+O_{g,B}\left(\frac{1}{n}\left(n^{-2\log B}\right)\right)
\]
as $n\to\infty$.
\end{cor}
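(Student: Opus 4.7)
The plan is to derive Corollary \ref{cor:tail-control-witten} as a direct specialization of Theorem \ref{thm:high-dim-sum} to the empty word $w=\id_{\F_{2g}}$, using the classical Frobenius-type formula that evaluates the integral of a character over a product of commutators.

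First, I would check that $w = \id_{\F_{2g}}$ is a legitimate input to Theorem \ref{thm:high-dim-sum}: the identity lies in $[\F_{2g},\F_{2g}]$ (so the reductions in $\S\S\ref{subsec:Integrating-over-SU(n)2g}$ apply), its reduced length is $|w|=0$, and the corresponding word map is constantly the identity matrix, so $\tr(w(x))=n$ for every $x\in\U(n)^{2g}$. This turns $\I(\id,\rho)$ into a pure surface-group character integral:
\[
\mathcal{I}(\id,\rho) \;=\; n\int_{\SU(n)^{2g}}\overline{\tr(\rho(R_{g}(x)))}\,d\mu_{\SU(n)^{2g}}^{\mathrm{Haar}}(x).
\]

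Second, I would evaluate the inner integral by an iterated application of Schur orthogonality. The identity $\int_{G}\rho(AXA^{-1})\,dA=\frac{\tr(\rho(X))}{\dim W}\Id$ gives $\int\rho([A,B])\,dA=\frac{\chi_{\rho}(B)}{\dim W}\rho(B^{-1})$, and then $\int\chi_{\rho}(B)\rho(B^{-1})\,dB=\frac{1}{\dim W}\Id$ by another application of Schur orthogonality; iterating across the $g$ commutators and taking traces yields
\[
\int_{\SU(n)^{2g}}\tr(\rho([A_{1},B_{1}]\cdots[A_{g},B_{g}]))\,d\mu \;=\;\frac{1}{(\dim W)^{2g-1}}.
\]
This quantity is a positive real, so $\mathcal{I}(\id,\rho)=\frac{n}{(\dim W)^{2g-1}}$, and hence
\[
(\dim W)\,\mathcal{I}(\id,\rho) \;=\; \frac{n}{(\dim W)^{2g-2}}.
\]

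Third, I would plug this identity into Theorem \ref{thm:high-dim-sum}. Part (1) shows the resulting series converges absolutely for $n\geq n_{0}(g)$, and part (2) applied with $|w|=0$ produces
\[
\sum_{(\rho,W)\in\widehat{\SU(n)}\backslash\Omega(B;n)}\frac{n}{(\dim W)^{2g-2}} \;\ll_{B,g}\; n^{0}\cdot n^{-2\log B}.
\]
Dividing by $n$ bounds the tail of the Witten zeta function, and splitting the absolutely convergent defining series (\ref{eq:witten-def}) as
\[
\zeta(2g-2;n)=\sum_{(\rho,W)\in\Omega(B;n)}\frac{1}{(\dim W)^{2g-2}}+\sum_{(\rho,W)\in\widehat{\SU(n)}\backslash\Omega(B;n)}\frac{1}{(\dim W)^{2g-2}}
\]
yields the claim. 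There is no genuine obstacle: the entire corollary is a bookkeeping consequence of Theorem \ref{thm:high-dim-sum} once one recognizes that $(\dim W)\mathcal{I}(\id,\rho)$ is exactly $n$ times a Witten-zeta summand.
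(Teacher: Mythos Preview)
Your proof is correct and follows essentially the same route as the paper: specialize Theorem \ref{thm:high-dim-sum} to $w=\id_{\F_{2g}}$, use the Frobenius-type identity $\int\overline{\tr(\rho(R_{g}(x)))}\,d\mu=\frac{1}{(\dim W)^{2g-1}}$ to identify $(\dim W)\mathcal{I}(\id,\rho)$ with $\frac{n}{(\dim W)^{2g-2}}$, and read off the tail bound with $|w|=0$. The paper simply cites the commutator integral formula (its equation (\ref{eq:R_g_integral})) rather than rederiving it via Schur orthogonality as you do, but the argument is otherwise identical.
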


\begin{proof}[Proof of Corollary \ref{cor:tail-control-witten} given Theorem \ref{thm:high-dim-sum}.]
Let $w=\id_{\F_{2g}}.$ We use the result, for any $(\rho,W)\in\widehat{\SU(n)}$
\begin{equation}
\int\overline{\tr(\rho((R_{g}(x))))}d\mu_{\SU(n)^{2g}}^{\mathrm{Haar}}(x)=\frac{1}{(\dim W)^{2g-1}}\label{eq:R_g_integral}
\end{equation}
which is due to Frobenius \cite{frobenius1896gruppencharaktere} when
$g=1$; see \cite[\S 2]{parzanchevski2014fourier} or \cite[eq. (2.2)]{CMP-Aut}
for the (easy) extension to general $g\geq2.$ Since $\tr(\id_{\F_{2g}}(x))=n$
for all $x\in\SU(n)^{2g}$ we obtain from (\ref{eq:R_g_integral})
that for any $(\rho,W)\in\widehat{\SU(n)}$, $\I(w,\rho)=\frac{n}{(\dim W)^{2g-1}}$,
hence 
\begin{align}
\Sigma_{2}(\id_{\F_{2g}},B,n) & =n\sum_{(\rho,W)\in\widehat{\SU(n)}\backslash\Omega(B;n)}\frac{1}{(\dim W)^{2g-2}}.\label{eq:heuristic-sigma-2}
\end{align}
Now the corollary follows directly\footnote{Admittedly, this is overkill. The full arguments needed to prove Theorem
\ref{thm:high-dim-sum} are not required for Corollary \ref{cor:tail-control-witten}:
the entirety of $\S$\ref{sec:The-contribution-from-single-large-dim}
is bypassed by (\ref{eq:heuristic-sigma-2}).} from Theorem \ref{thm:high-dim-sum}.
\end{proof}

\subsection{Preliminary estimates}

Even in the simple case of $\gamma=\id$ (\ref{eq:heuristic-sigma-2})
shows that the problem of estimating $\Sigma_{2}$ is related to the
large-$n$ convergence of the Witten zeta function as in Theorem \ref{thm:GLM}.
The techniques used in \cite{GLM} can be adapted to deal with (\ref{eq:heuristic-sigma-2}),
and indeed, are also useful in dealing with general $\gamma$ (or
$w$). The key estimate we take from \emph{(ibid.) }is the following.
Given a YD $\lambda$ with $\ell(\lambda)<n$, we define $x_{i}(\lambda)\eqdf\lambda_{i}-\lambda_{i+1}$,
setting $\lambda_{i}=0$ for $i>\ell(\lambda)$. These are the coefficients
of the highest weight of $(\rho_{n}^{\lambda},W_{n}^{\lambda})$ with
respect to a system of fundamental weights for $\SU(n)$.
\begin{lem}[{\cite[eq. (1), Lemma 8]{GLM}}]
\label{lem:GLM}For a YD $\lambda$ with $\ell(\lambda)<n$, we have
\[
D_{\lambda}(n)\geq\prod_{i=1}^{n-1}(1+x_{i}(\lambda))^{v_{i}}
\]
where $v_{i}$ are positive real numbers satisfying for $0\leq j\leq\frac{n}{2}$
\[
v_{j}=v_{n-j}\geq j\max(1,\log(n-1)-\log j).
\]
\end{lem}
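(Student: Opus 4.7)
The plan is to start from the Weyl dimension formula for $\SU(n)$, which with the convention $\lambda_i = 0$ for $i > \ell(\lambda)$ reads
\begin{equation*}
D_\lambda(n) \;=\; \prod_{1 \leq a < b \leq n}\!\left(1 + \frac{\lambda_a - \lambda_b}{b-a}\right).
\end{equation*}
Since $x_i(\lambda) \geq 0$ one has $\lambda_a - \lambda_b = \sum_{k=a}^{b-1} x_k(\lambda)$ for $a<b$, so every factor above is at least $1$. The central step will be the elementary AM-GM estimate
\begin{equation*}
\left(1 + \frac{1}{b-a}\sum_{k=a}^{b-1} y_k\right)^{b-a} \;\geq\; \prod_{k=a}^{b-1}(1+y_k), \qquad y_a,\ldots,y_{b-1} \geq 0,
\end{equation*}
applied with $y_k = x_k(\lambda)$ to each factor of the Weyl product.

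Taking logarithms, summing over all pairs $a < b$ in $[n]$ and interchanging the order of summation then yields
\begin{equation*}
\log D_\lambda(n) \;\geq\; \sum_{k=1}^{n-1} v_k \,\log(1+x_k(\lambda)), \qquad v_k \;\eqdf\; \sum_{\substack{1 \leq a \leq k \\ k+1 \leq b \leq n}} \frac{1}{b-a}.
\end{equation*}
Exponentiating gives the product form in the statement, with the $v_k$ in explicit closed form. It remains to check the two asserted properties of $v_k$. The symmetry $v_k = v_{n-k}$ is immediate from the involution $(a,b) \mapsto (n+1-b,\, n+1-a)$ on the index set defining $v_k$, which preserves $b-a$ and exchanges the roles of $k$ and $n-k$.

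For the lower bound I would group the pairs by $d = b - a$: for $k \leq n/2$ and $1 \leq d \leq k$ there are exactly $d$ pairs $(a,b)$ with $a \leq k < b$ and $b-a = d$, while for $k < d \leq n-k$ there are exactly $k$ such pairs. This gives
\begin{equation*}
v_k \;\geq\; k \;+\; k\sum_{d=k+1}^{n-k}\frac{1}{d} \;\geq\; k\!\left(1 + \log\tfrac{n-k+1}{k+1}\right),
\end{equation*}
from which $v_k \geq k\max(1,\log(n-1)-\log k)$ follows after a short comparison of the right-hand side with each of the two alternatives inside the maximum. The main obstacle is bookkeeping rather than conceptual: tuning the harmonic-sum estimate to land exactly at $\log(n-1) - \log k$ (rather than $\log((n-k+1)/(k+1))$) and handling the transitional regime where $k$ is comparable to $n$ and the trivial bound $v_k \geq k$ takes over. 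Since the lemma is quoted directly from \cite{GLM}, I would simply import their calculation at this final accounting step rather than re-derive it from scratch.
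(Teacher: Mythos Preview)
Your proposal is correct and matches the paper's approach exactly: the paper does not spell out a proof but states that the lemma ``follows from applying the AM-GM inequality to the Weyl dimension formula'' and cites \cite{GLM}. Your derivation of $v_k=\sum_{a\le k<b}\frac{1}{b-a}$ via AM-GM on each Weyl factor, together with the symmetry and the harmonic-sum lower bound, is precisely that argument, and deferring the final numerical comparison to \cite{GLM} is what the paper itself does.
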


For the reader's convenience, Lemma \ref{lem:GLM} follows from applying
the AM-GM inequality to the Weyl dimension formula. It will turn out,
because of Lemma \ref{lem:GLM}, to be useful to work with the coordinates
$\x(\lambda)\in\N_{0}^{n-1}$ from now on, instead of $\lambda$.
On the other hand, Theorem \ref{thm:single-dimension-bound} involves
quantities $|\lambda/\mu|$ and $|\SSTab_{[m+1,n]}(\lambda/\mu)|$
for $\mu\subset^{\D}\lambda$, $m+\D=n$. We now estimate these quantities
in terms of $\x(\lambda)$.
\begin{lem}
\label{lem:lambdaminusmu-bound}If $\D\in\N_{0}$, $\lambda$ and
$\mu$ are YDs with $\ell(\lambda)\leq n-1$ and $\mu\subset^{\D}\lambda$
then
\[
|\lambda/\mu|\leq\D\sum_{j=1}^{n-1}x_{j}(\lambda)\leq\D\prod_{j=1}^{n-1}(1+x_{j}(\lambda)).
\]
\end{lem}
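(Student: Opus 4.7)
The plan is to unwind the definition of $\mu \subset^{\D} \lambda$ and reduce everything to a telescoping identity for $\lambda_1$.

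First I would use the definition from $\S\S\ref{subsec:Young-diagrams-and}$: there exists a chain
\[
\mu = \mu_0 \subset^1 \mu_1 \subset^1 \cdots \subset^1 \mu_{\D} = \lambda
\]
of Young diagrams such that consecutive differences have no two boxes in the same column. In particular, for each $i$, the skew diagram $\mu_i/\mu_{i-1}$ contains at most one box per column, so
\[
|\mu_i/\mu_{i-1}| \leq (\mu_i)_1 \leq \lambda_1.
\]
Summing over $i$ yields $|\lambda/\mu| \leq \D \lambda_1$.

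Next I would identify $\lambda_1$ with the sum of the $x_j(\lambda)$'s. Since $\ell(\lambda) \leq n-1$, we have $\lambda_n = 0$, so the telescoping sum gives
\[
\lambda_1 = \sum_{j=1}^{n-1}(\lambda_j - \lambda_{j+1}) = \sum_{j=1}^{n-1} x_j(\lambda).
\]
Combining with the previous display proves the first inequality.

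For the second inequality, I would invoke the elementary fact that for any nonnegative reals $a_1,\ldots,a_k$,
\[
\sum_{j=1}^{k} a_j \leq 1 + \sum_{j=1}^{k} a_j \leq \prod_{j=1}^{k}(1+a_j),
\]
the last step because expanding the product produces $1 + \sum_j a_j$ plus additional nonnegative terms. Applied with $a_j = x_j(\lambda)$ this gives $\sum_{j=1}^{n-1} x_j(\lambda) \leq \prod_{j=1}^{n-1}(1+x_j(\lambda))$, completing the proof. There is no real obstacle here; the only step requiring any thought is verifying $|\mu_i/\mu_{i-1}| \leq \lambda_1$, which is immediate from the one-box-per-column condition defining $\subset^1$.
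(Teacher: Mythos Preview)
Your proof is correct and follows essentially the same approach as the paper: unwind $\mu\subset^{\D}\lambda$ as a chain, bound each skew piece by $\lambda_1$ via the one-box-per-column condition, and telescope $\lambda_1=\sum_j x_j(\lambda)$. You even supply the easy justification for the second inequality $\sum_j x_j(\lambda)\leq\prod_j(1+x_j(\lambda))$, which the paper leaves implicit.
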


\begin{proof}
The condition that $\mu\subset^{\D}\lambda$ means that there is a
chain of YDs $\mu=\mu^{\D}\subset^{1}\mu^{\D-1}\subset^{1}\ldots\subset\mu^{1}\subset^{1}\mu_{0}=\lambda$.
First note that $|\lambda/\mu^{1}|\leq\lambda_{1}$ since to obtain
$\mu^{1}$ from $\lambda$, one can delete at most one box from each
column and there are $\lambda_{1}$ non-empty columns of $\lambda$.
One also has $\mu_{1}^{1}\leq\lambda_{1}$ and so repeating the argument
gives $|\mu^{1}/\mu^{2}|\leq\mu_{1}\leq\lambda_{1}$ and iterating
further gives $|\mu^{i}/\mu^{i+1}|\leq\lambda_{1}$ for all $0\leq i\leq\D-1$.
Hence
\[
|\lambda/\mu|=\sum_{i=0}^{\D-1}|\mu^{i}/\mu^{i+1}|\leq\D\lambda_{1}=\D\sum_{j=1}^{n-1}x_{i}(\lambda).
\]
\end{proof}
\begin{lem}
\label{lem:size-of-SST-bound}If $\D\in\N_{0}$, $n\geq\D$, $\lambda$
and $\mu$ are YDs with $\ell(\lambda)\leq n-1$ and $\mu\subset^{\D}\lambda$,
and $m=n-\D$, then
\[
|\SSTab_{[m+1,n]}(\lambda/\mu)|\leq\left(\prod_{i=1}^{n-1}(x_{i}(\lambda)+1)\right)^{2^{\D}}.
\]
\end{lem}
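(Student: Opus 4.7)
The plan is to interpret $|\SSTab_{[m+1,n]}(\lambda/\mu)|$ as the number of chains of horizontal strips of length $\D$ from $\mu$ to $\lambda$, and then induct on $\D$. A semistandard tableau of shape $\lambda/\mu$ with entries in $[m+1,n]$ corresponds bijectively, by recording for each $j\in[m+1,n]$ the horizontal strip of boxes carrying the entry $j$, to a chain
\[
\mu = \mu^{(0)} \subset^{1} \mu^{(1)} \subset^{1} \cdots \subset^{1} \mu^{(\D)} = \lambda,
\]
so the count depends only on $\D=n-m$, not on $n$ itself.

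Write $p(\lambda)\eqdf\prod_{i=1}^{n-1}(x_{i}(\lambda)+1)$. Two elementary combinatorial estimates drive the induction. First, by the interlacing characterization of horizontal strips, any YD $\nu$ with $\nu\subset^{1}\lambda$ is specified uniquely by a choice of $\nu_{i}\in[\lambda_{i+1},\lambda_{i}]$ for each $i$, so the number of such $\nu$ is at most $p(\lambda)$. Second, for such $\nu$,
\[
x_{i}(\nu)=\nu_{i}-\nu_{i+1}\leq\lambda_{i}-\lambda_{i+2}=x_{i}(\lambda)+x_{i+1}(\lambda),
\]
whence $x_{i}(\nu)+1\leq(x_{i}(\lambda)+1)(x_{i+1}(\lambda)+1)$; multiplying over $i$ and using $x_{i}(\lambda)=0$ for $i\geq n$ yields $p(\nu)\leq p(\lambda)^{2}$.

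I would then induct on $\D$ to prove the slightly sharper statement $|\SSTab_{[m+1,n]}(\lambda/\mu)|\leq p(\lambda)^{2^{\D}-1}$, from which the lemma follows since $2^{\D}-1\leq2^{\D}$. The base case $\D=0$ is trivial. For the inductive step, condition on the penultimate shape $\nu\eqdf\mu^{(\D-1)}$ to write
\[
|\SSTab_{[m+1,n]}(\lambda/\mu)|=\sum_{\nu\,:\,\mu\subset^{\D-1}\nu\subset^{1}\lambda}|\SSTab_{[m+1,n-1]}(\nu/\mu)|,
\]
apply the inductive hypothesis to each summand, and combine with the two estimates to obtain
\[
|\SSTab_{[m+1,n]}(\lambda/\mu)|\leq p(\lambda)\cdot\bigl(p(\lambda)^{2}\bigr)^{2^{\D-1}-1}=p(\lambda)^{2^{\D}-1}.
\]
There is no conceptual obstacle; the only subtlety is arranging the induction so that the exponent iterates correctly, which is precisely why the sharper $2^{\D}-1$ rather than $2^{\D}$ is what one inducts on.
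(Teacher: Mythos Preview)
Your proof is correct and follows essentially the same approach as the paper: interpret semistandard tableaux as chains of horizontal strips, bound the number of $\nu\subset^{1}\lambda$ by $p(\lambda)$, bound $p(\nu)\le p(\lambda)^{2}$ via $x_i(\nu)\le x_i(\lambda)+x_{i+1}(\lambda)$, and iterate. The only cosmetic difference is that the paper unrolls the iteration explicitly (choosing $\mu^{1},\mu^{2},\ldots$ from $\lambda$ downwards and multiplying the bounds), which in fact yields the marginally sharper exponent $2^{\D-1}-1$, whereas your formal induction on $\D$ conditioning on the penultimate shape gives $2^{\D}-1$; both are well within the stated $2^{\D}$.
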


\begin{proof}
Choosing an element of $\SSTab_{[m+1,n]}(\lambda/\mu)$ is the same
as choosing a sequence $\mu=\mu^{\D}\subset^{1}\mu^{\D-1}\subset^{1}\ldots\subset\mu^{1}\subset^{1}\mu_{0}=\lambda$.
We regard all $\x(\mu^{i})\in\N_{0}^{n-1}$ by extending by zeros.
The number of choices of $\mu^{1}$ is $\prod_{i=1}^{n-1}(x_{i}(\lambda)+1)$
since one removes from the $i$\textsuperscript{th} row of $\lambda$
between $0$ and $x_{i}(\lambda)$ boxes inclusive. Regardless of
how $\mu^{1}$ is chosen we have
\[
x_{i}(\mu^{1})=\mu_{i}^{1}-\mu_{i+1}^{1}\leq\lambda_{i}-\lambda_{i+2}=x_{i}(\lambda)+x_{i+1}(\lambda)\leq(1+x_{i}(\lambda))(1+x_{i+1}(\lambda))
\]
so 
\[
\prod_{i=1}^{n-1}(x_{i}(\mu)+1)\leq\left(\prod_{i=1}^{n-1}(x_{i}(\lambda)+1)\right)^{2}.
\]
Thus repeating the previous argument gives that the number of choices
of $\mu^{2}$, given $\mu^{1}$, is at most $\left(\prod_{i=1}^{n-1}(x_{i}(\lambda)+1)\right)^{2}$.
Iterating further, the number of choices of the chain $\mu^{1},\mu^{2},\ldots,\mu^{\D-1}$
is at most
\begin{align*}
 & \prod_{i=1}^{n-1}(x_{i}(\lambda)+1)\left(\prod_{i=1}^{n-1}(x_{i}(\lambda)+1)\right)^{2}\left(\prod_{i=1}^{n-1}(x_{i}(\lambda)+1)\right)^{4}\cdots\left(\prod_{i=1}^{n-1}(x_{i}(\lambda)+1)\right)^{2^{\D-2}}\\
= & \left(\prod_{i=1}^{n-1}(x_{i}(\lambda)+1)\right)^{\sum_{k=0}^{\D-2}2^{k}}\leq\left(\prod_{i=1}^{n-1}(x_{i}(\lambda)+1)\right)^{2^{\D}}.
\end{align*}
\end{proof}
We can use Lemmas \ref{lem:lambdaminusmu-bound} and Lemma \ref{lem:size-of-SST-bound}
to tidy up Theorem \ref{thm:single-dimension-bound}.
\begin{prop}
\label{prop:tidied-up-single-lambda-contrib}For $w\in[\F_{2g},\F_{2g}]$,
we have 
\[
|\I(w,\lambda)|\ll_{w,g}n^{|w|}\frac{\left(\prod_{j=1}^{n-1}(1+x_{j}(\lambda))\right)}{D_{\lambda}(n)^{2g-1}}^{C(w,g)}
\]
as $n\to\infty$, where
\[
C(w,g)\eqdf4g(|w|^{2}+2^{|w|}).
\]
\end{prop}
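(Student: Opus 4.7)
The plan is to derive Proposition \ref{prop:tidied-up-single-lambda-contrib} directly from Theorem \ref{thm:single-dimension-bound} by inserting the three preliminary estimates together with one branching inequality. No new input is needed; the content is bookkeeping the powers of $\prod_{j=1}^{n-1}(1+x_j(\lambda))$ that enter from different sources.

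First I would deal with the outer sum $\sum_{[I]\in S_n\backslash[n]^{|w|}}(n)_{\D(I)}$. Orbits of $S_n$ on $[n]^{|w|}$ are parametrized by set partitions of $[|w|]$, so their number is the Bell number $B_{|w|}$, depending only on $w$, and each $(n)_{\D(I)}$ is at most $n^{|w|}$ since $\D(I)\leq|w|$. This produces the $n^{|w|}$ factor in the conclusion (with an implied $B_{|w|}$ absorbed into the $\ll_{w,g}$). Next I would turn $D_{\lambda}(n)^{2g}$ into $D_{\lambda}(n)^{2g-1}$ by iterating the branching rule (\ref{eq:branching-rule}) $\D$ times, which yields the identity
\[
D_{\lambda}(n)=\sum_{\substack{\mu\subset^{\D}\lambda\\ \ell(\mu)\leq n-\D}}|\SSTab_{[n-\D+1,n]}(\lambda/\mu)|\,D_{\mu}(n-\D),
\]
and in particular the inequality $D_{\mu}(n-\D)\,|\SSTab_{[n-\D+1,n]}(\lambda/\mu)|\leq D_{\lambda}(n)$. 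Using this to eat one copy of the tableau count and one copy of $D_{\lambda}(n)$ converts the summand into
\[
\frac{|\SSTab_{[n-\D+1,n]}(\lambda/\mu)|^{4g-1}(|\lambda/\mu|+|w|)^{4g|w|}}{D_{\lambda}(n)^{2g-1}}.
\]

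Now I would apply Lemma \ref{lem:lambdaminusmu-bound} with $\D\leq|w|$ to bound $(|\lambda/\mu|+|w|)^{4g|w|}$ by a $w$-dependent constant times $\bigl(\prod_{j=1}^{n-1}(1+x_j(\lambda))\bigr)^{4g|w|}$, and Lemma \ref{lem:size-of-SST-bound} to bound $|\SSTab_{[n-\D+1,n]}(\lambda/\mu)|^{4g-1}$ by $\bigl(\prod_j(1+x_j(\lambda))\bigr)^{(4g-1)2^{|w|}}$. The remaining sum over $\mu\subset^{\D}\lambda$ is bounded by the number of such $\mu$, which is at most the number of chains $\mu=\mu^{\D}\subset^{1}\cdots\subset^{1}\mu^{0}=\lambda$; the counting argument used in the proof of Lemma \ref{lem:size-of-SST-bound} bounds this by $\bigl(\prod_j(1+x_j(\lambda))\bigr)^{2^{|w|}-1}$.

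Combining these three sources, the total exponent on $\prod_j(1+x_j(\lambda))$ is at most $4g|w|+(4g-1)2^{|w|}+(2^{|w|}-1)$, which is comfortably bounded by $C(w,g)=4g(|w|^2+2^{|w|})$; there is plenty of slack in the statement. The only step that needs attention is this bookkeeping; once the three powers of $\prod_j(1+x_j(\lambda))$ are tabulated and added, the proposition falls out directly. I do not anticipate any genuine obstacle, since all nontrivial estimates have already been established in the preceding lemmas and in Theorem \ref{thm:single-dimension-bound}.
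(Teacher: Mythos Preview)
Your proposal is correct and follows essentially the same approach as the paper: both derive the proposition from Theorem \ref{thm:single-dimension-bound} by plugging in Lemmas \ref{lem:lambdaminusmu-bound} and \ref{lem:size-of-SST-bound} and using the branching rule to trade $D_\lambda(n)^{2g}$ for $D_\lambda(n)^{2g-1}$. The only minor difference is in how the branching step is executed: the paper uses the summed inequality $\sum_{\mu}D_\mu(n-\D)\leq D_\lambda(n)$ to absorb the $\mu$-sum directly (keeping all $4g$ copies of the tableau count for Lemma \ref{lem:size-of-SST-bound}), whereas you use the termwise inequality $D_\mu(n-\D)\,|\SSTab_{[n-\D+1,n]}(\lambda/\mu)|\leq D_\lambda(n)$ and then separately bound the number of $\mu$'s by the chain count; both routes land comfortably below $C(w,g)$.
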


\begin{proof}
We begin by using Lemmas \ref{lem:lambdaminusmu-bound} and \ref{lem:size-of-SST-bound}
in Theorem \ref{thm:single-dimension-bound} to obtain
\begin{align*}
|\I(w,\lambda)| & \leq\sum_{[I]\in S_{_{n}}\backslash[n]^{|w|}}(n)_{\D(I)}\frac{1}{D_{\lambda}(n)^{2g}}\sum_{\substack{\mu\subset^{\D(I)}\lambda\\
\ell(\mu)\leq n-\D(I)
}
}D_{\mu}(n-\D(I))\left(|\lambda/\mu|+|w|\right)^{4g|w|}|\SSTab_{[n-\D(I)+1,n]}(\lambda/\mu)|^{4gw}\\
 & \ll_{w,g}\sum_{[I]\in S_{_{n}}\backslash[n]^{|w|}}n^{\D(I)}\frac{\left(\prod_{j=1}^{n-1}(1+x_{j}(\lambda))\right)^{4g|w|\D(I)+4g\cdot2^{\D(I)}}}{D_{\lambda}(n)^{2g}}\sum_{\substack{\mu\subset^{\D(I)}\lambda\\
\ell(\mu)\leq n-\D(I)
}
}D_{\mu}(n-\D(I)).
\end{align*}
Here the notation $\ll_{w}$ is with respect to $n\to\infty$. Since
given $\lambda$ with $\ell(\lambda)\leq n-1$, for every $\mu\subset^{\D(I)}\lambda$
with $\ell(\mu)\leq n-\D(I)$, $\dim\Hom_{\U(n-\D(I))}(W_{n-\D(I)}^{\mu},W_{n}^{\lambda})\geq1$
by the branching rules, we have 
\[
\sum_{\substack{\mu\subset^{\D(I)}\lambda\\
\ell(\mu)\leq n-\D(I)
}
}D_{\mu}(n-\D(I))\leq D_{\lambda}(n),
\]
hence
\[
|\I(w,\lambda)|\ll_{w,g}\sum_{[I]\in S_{_{n}}\backslash[n]^{|w|}}n^{\D(I)}\frac{\left(\prod_{j=1}^{n-1}(1+x_{j}(\lambda))\right)^{8|w|\D(I)+8\cdot2^{\D(I)}}}{D_{\lambda}(n)^{2g-1}}.
\]
Recalling that $\D(I)$ is the number of distinct indices of $I$,
we have $\D(I)\leq|w|$. Moreover,
\[
|S_{_{n}}\backslash[n]^{|w|}|\leq|w|^{|w|}.
\]
So we obtain 
\begin{equation}
|\I(w,\lambda)|\ll_{w,g}n^{|w|}\frac{\left(\prod_{j=1}^{n-1}(1+x_{j}(\lambda))\right)}{D_{\lambda}(n)^{2g-1}}^{4g|w|^{2}+4g\cdot2^{|w|}}=n^{|w|}\frac{\left(\prod_{j=1}^{n-1}(1+x_{j}(\lambda))\right)}{D_{\lambda}(n)^{2g-1}}^{C(w,g)}\label{eq:single-lambda-bound}
\end{equation}
as required.
\end{proof}

\subsection{Proof of Theorem \ref{thm:high-dim-sum}\label{subsec:Proof-of-Theorem-high-dim-sum}}

We begin by considering the sum $\Sigma_{2}(w,B,n)=\sum_{\lambda\in\Lambda(B;n)}D_{\lambda}(n)\I(w,\lambda)$.
Fix $g\geq2$. We use Proposition \ref{prop:tidied-up-single-lambda-contrib}
in this sum to obtain
\[
\Sigma_{2}(w,B,n)\ll_{w,g}n^{|w|}\sum_{\lambda\in\Lambda(B;n)}\frac{\left(\prod_{j=1}^{n-1}(1+x_{j}(\lambda))\right)}{D_{\lambda}(n)^{2g-2}}^{C(w,g)}
\]
where $C(w,g)>0$. Let $C=C(w,g)$. We need a key observation relating
the weight coefficients $\x(\lambda)$ to the set $\Omega(B;n)$ and
hence $\Lambda(B;n)$. It is not hard to check that if $\lambda\in\Lambda(B;n)$
then either 
\begin{itemize}
\item $x_{i}(\lambda)>B$ for some $i\leq B$ or $i\geq n-B$ \textbf{or}
\item $x_{i}(\lambda)>0$ for some $B<i<n-B$, 
\end{itemize}
and given $\x\in\Z_{\geq0}^{n-1}$ satisfying these conditions there
is at most one corresponding $\lambda\in\Lambda(B;n)$. (Obtaining
these conditions was the reason for the original choice of $\Omega(B,n)$.)

Now by Lemma \ref{lem:GLM}
\begin{align}
\Sigma_{2}(w,B,n) & \ll_{w,g}n^{|w|}\sum_{\lambda\in\Lambda(B;n)}\frac{1}{\prod_{j=1}^{n-1}(1+x_{j}(\lambda))^{(2g-2)v_{j}-C}}\nonumber \\
 & \leq n^{|w|}\sum_{\lambda\in\Lambda(B;n)}\frac{1}{\prod_{j=1}^{n-1}(1+x_{j}(\lambda))^{2v_{j}-C}}\\
 & \leq n^{|w|}\sum_{\substack{\x\in\N_{0}^{n-1}:\\
x_{j}>0\text{ for some }B<j<n-B\text{ \textbf{or}}\\
x_{j}>B\text{ for some }j\in[B]\cup[n-B,n-1]
}
}\frac{1}{\prod_{j=1}^{n-1}(1+x_{j}(\lambda))^{2v_{j}-C}}.\label{eq:passage-to-x}
\end{align}
Here $v_{j}$ are the constants from Lemma \ref{lem:GLM} satisfying
for $0\leq j\leq\frac{n}{2}$
\[
v_{j}=v_{n-j}\geq j\max(1,\log(n-1)-\log j).
\]
Notice that if $j\in[\frac{n-1}{e},n-\frac{n-1}{e}]$ we have $v_{j}\geq\frac{n-1}{e}$.
The function $x\mapsto x\log\left(\frac{n-1}{x}\right)$ has non-negative
derivative on $[1,\frac{n-1}{e}]$, so the minimum value of $v_{j}$
for $j\in[1,\frac{n-1}{e}]\cup[n-1-\frac{n-1}{e},n-1]$ is $v_{1}\geq\log(n-1)$.
We have $\frac{n-1}{e}\geq\log(n-1)$ for $n\gg1$ so for $n\gg1$
we have 
\begin{equation}
v_{j}\geq\log(n-1),\quad j\in[n-1].\label{eq:v_j-abs-lower-bound}
\end{equation}
The sum in (\ref{eq:passage-to-x}) can be crudely estimated by disregarding
the constraints on $\x$ to obtain for $n\gg1$
\begin{equation}
\Sigma_{2}(w,B,n)\ll_{w}n^{|w|}\prod_{j=1}^{n-1}\zeta(2v_{j}-C)\label{eq:crude-est-to-zeta}
\end{equation}
where for $\Re(s)>1$
\[
\zeta(s)\eqdf\sum_{k=1}^{\infty}\frac{1}{n^{s}}
\]
 is an absolutely convergent sum defining \emph{Riemann's zeta function.
}Notice that for $n\gg_{w,g}1$, $2v_{j}-C\geq2\log(n-1)-C>1$, so
(\ref{eq:crude-est-to-zeta}) shows that $\Sigma_{2}(w,B,n)$ is defined
by an absolutely convergent sum. \emph{This proves the first statement
of Theorem \ref{thm:high-dim-sum}.}

We now turn to finer estimates for $\Sigma_{2}$. Assume $n\geq2B\max(2B,C+1)$
and $2\log(n-1)-C>2$, so that $2v_{j}-C\geq3$ . 

Incorporating the constraints on $\x$ in (\ref{eq:passage-to-x})
gives the improved estimate
\begin{align*}
\frac{\Sigma_{2}(w,B,n)}{n^{|w|}} & \ll_{w,g}\sum_{j=1}^{B}\zeta^{(B+1)}(2v_{j}-C)\prod_{\substack{i\in[n-1],\,i\neq j}
}\zeta(2v_{i}-C)\\
 & +\sum_{j=B+1}^{n-B-1}\zeta^{(2)}(2v_{j}-C)\prod_{\substack{i\in[n-1],\,i\neq j}
}\zeta(2v_{i}-C)\\
 & +\sum_{j=n-B}^{n-1}\zeta^{(B+1)}(2v_{j}-C)\prod_{\substack{i\in[n-1],\,i\neq j}
}\zeta(2v_{i}-C)\\
 & \ll\left(\sum_{j=1}^{B}\zeta^{(B+1)}(2v_{j}-C)+\sum_{j=B+1}^{\lfloor\frac{n}{2}\rfloor}\zeta^{(1)}(2v_{j}-C)\right)\prod_{\substack{i\in[n-1]}
}\zeta(2v_{i}-C)
\end{align*}
where the last estimate used $v_{j}=v_{n-j}$ and $\zeta(2-C)\geq1$,
and
\[
\zeta^{(p)}(s)\eqdf\sum_{k=p}^{\infty}\frac{1}{n^{s}}.
\]

Moreover, for $s\geq3$ the simple bound \cite[pg. 1826]{GLM} $\zeta(s)\leq1+2\cdot2^{-s}$
and our assumptions on $n$ imply
\begin{align*}
\log(\prod_{\substack{i\in[n-1]}
}\zeta(2v_{i}-C)) & \leq2\sum_{j=1}^{\lfloor\frac{n}{2}\rfloor}\log(1+2\cdot2^{-2v_{j}+C})\\
 & \leq4\sum_{j=1}^{\lfloor\frac{n}{2}\rfloor}2^{-2v_{j}+C}\\
 & \leq2^{C+2}\left(\sum_{1\leq j\leq\lfloor\frac{n}{2}\rfloor}2^{-2j}\right)\ll_{w}1.
\end{align*}
Hence
\begin{equation}
\frac{\Sigma_{2}(w,B,n)}{n^{|w|}}\ll_{w}\sum_{j=1}^{B}\zeta^{(B+1)}(2v_{j}-C)+\sum_{j=B+1}^{\lfloor\frac{n}{2}\rfloor}\zeta^{(2)}(2v_{j}-C).\label{eq:2-part-bound}
\end{equation}
We next need bounds for the $\zeta^{(p)}$. By comparison with an
integral we have for $s>1$ and $p\geq2$
\[
\zeta^{(p)}(s)\leq\frac{(p-1)^{1-s}}{s-1}.
\]
This gives
\begin{align}
\sum_{j=1}^{B}\zeta^{(B+1)}(2v_{j}-C) & \leq\sum_{j=1}^{B}\frac{B^{1+C-2v_{j}}}{2v_{j}-C}\leq\frac{1}{2}B^{1+C}\sum_{j=1}^{B}B^{-2v_{j}}\nonumber \\
 & \ll_{B,w}\sum_{j=1}^{B}B^{-2j\log\left(\frac{n-1}{j}\right)}\leq\sum_{j=1}^{B}B^{-2j\log\left(\frac{n-1}{B}\right)}\nonumber \\
 & \ll_{B}B^{-2\log\left(\frac{n-1}{B}\right)}=\left(\frac{n-1}{B}\right)^{-2\log B}\ll_{B}(n-1)^{-2\log B}.\label{eq:frist-part-bound}
\end{align}

To deal with the second part of (\ref{eq:2-part-bound}) we use a
different bound. One has the bound \cite[proof of Lemma 6]{GLM}
\[
\zeta^{(2)}(s)\leq2\cdot2^{-s}
\]
for $s\geq3$. Under our current assumptions on $n$ this gives
\begin{align}
\sum_{j=B+1}^{\lfloor\frac{n}{2}\rfloor}\zeta^{(2)}(2v_{j}-C) & \leq2\sum_{j=B+1}^{\lfloor\frac{n}{2}\rfloor}2^{C-2v_{j}}\ll_{w,g}\sum_{j=B+1}^{\lfloor\frac{n}{2}\rfloor}2^{-2v_{j}}\nonumber \\
 & \leq\sum_{j=B+1}^{\lceil\sqrt{n-1}\rceil}2^{-2j\log\left(\frac{n-1}{j}\right)}+\sum_{j=\lceil\sqrt{n-1}\rceil+1}^{\lfloor\frac{n}{2}\rfloor}2^{-2j}\nonumber \\
 & \ll\sum_{j=B+1}^{\lceil\sqrt{n-1}\rceil}2^{-j\log\left(n-1\right)}+2^{-2\sqrt{n-1}}\ll(n-1)^{(B+1)\log2}+2^{-2\sqrt{n-1}}.\label{eq:second-part-bound}
\end{align}
Thus in total by combining (\ref{eq:2-part-bound}), (\ref{eq:frist-part-bound}),
and (\ref{eq:second-part-bound}) we achieve
\begin{align*}
\Sigma_{2}(w,B,n) & \ll_{B,w,g}n^{|w|}\left((n-1)^{-2\log B}+(n-1)^{(B+1)\log2}+2^{-2\sqrt{n-1}}\right)\\
 & \ll_{B,w,g}n^{|w|-2\log B}
\end{align*}
as $n\to\infty$\emph{. This proves the second statement of Theorem
\ref{thm:high-dim-sum}}. $\square$

\subsection{Proof of Theorem \ref{thm:Main-theorem}\label{subsec:Proof-of-Theorem-main}}
\begin{proof}[Proof of Theorem \ref{thm:Main-theorem}]

We are given $M\in\N$ and we choose $B\in\N$ such that 
\[
2\log B\geq M+|w|\geq M.
\]
We first take care of the term $\zeta(2g-2;n)^{-1}$ appearing in
Corollary \ref{cor:Fourier-expansion-of-main-integral}. Corollary
\ref{cor:tail-control-witten} shows that 
\begin{align*}
\zeta(2g-2;n) & =\sum_{(\rho,W)\in\Omega(B;n)}\frac{1}{(\dim W)^{2g-2}}+O_{g,B}\left(\frac{1}{n}\left(n^{-2\log B}\right)\right)\\
 & =\sum_{\substack{\mu,\nu\\
\ell(\mu),\ell(\nu)\leq B,\mu_{1},\nu_{1}\leq B^{2}
}
}\frac{1}{D_{[\mu,\nu]}(n)^{2g-2}}+O_{g,B}\left(n^{-M-1}\right).
\end{align*}
Now as the sum is over a fixed finite set, Corollary \ref{cor:dim-of-rational-representation}
implies that for $n>2B^{3}$ we have
\begin{equation}
\sum_{(\mu,\nu)\in\Omega(B)}\frac{1}{D_{[\mu,\nu]}(n)^{2g-2}}=F_{g,B}(n)\label{eq:Fdef}
\end{equation}
for $F_{g,B}\in\Q(t)$ with only possible poles in $[-2B^{3},2B^{3}]$
and no zeros there. From Theorem \ref{thm:GLM} $\lim_{n\to\infty}F_{g,B}(n)=1$;
this can also be seen directly from (\ref{eq:Fdef}). Therefore we
have 
\begin{equation}
\zeta(2g-2;n)^{-1}=\frac{1}{F_{g,B}(n)}\left(1+O_{g,B}\left(F_{g,B}(n)^{-1}n^{-M-1}\right)\right)^{-1}=\frac{1}{F_{g,B}(n)}+O_{g}\left(n^{-M-1}\right)\label{eq:rat1}
\end{equation}
as $n\to\infty$.

By Theorem \ref{thm:rational-small-dim}, Theorem \ref{thm:high-dim-sum},
and Corollary \ref{cor:Fourier-expansion-of-main-integral} we have
\begin{align}
\E_{g,n}[\tr_{\gamma}] & =\frac{1}{\zeta(2g-2;n)}\left(Q_{B,w}(n)+O_{w,g}\left(n^{|w|-2\log B}\right)\right)\nonumber \\
 & \frac{1}{\zeta(2g-2;n)}\left(Q_{B,w}(n)+O_{w,g}\left(n^{-M}\right)\right)\label{eq:full-formula}
\end{align}
as $n\to\infty$, where $Q_{w,B}\in\Q(t)$. Combining (\ref{eq:rat1})
with (\ref{eq:full-formula}) gives as $n\to\infty$
\begin{align}
\E_{g,n}[\tr_{\gamma}] & =\left(\frac{1}{F_{g,B}(n)}+O_{g}(\left(n^{-M-1}\right)\right)\left(Q_{w,B}(n)+O_{g,w}\left(n^{-M}\right)\right)\nonumber \\
 & =\frac{Q_{w,B}(n)}{F_{g,B}(n)}+O_{g}\left(Q_{w,B}(n)n^{-M-1}\right)+O_{g,w}\left(n^{-M}\right).\label{eq:pre-boot}
\end{align}
Using $F_{g,B}(n)\to1$ as $n\to\infty$ again, and $\E_{g,n}[\tr_{\gamma}]\leq n$,
we obtain $Q_{w,B}(n)=O_{g,w}(n)$ as $n\to\infty$ and hence we bootstrap
(\ref{eq:pre-boot}) to
\[
\E_{g,n}[\tr_{\gamma}]=\frac{Q_{w,B}(n)}{F_{g,B}(n)}+O_{g,w}\left(n^{-M}\right).
\]
This completes the proof; $\frac{Q_{w,B}(n)}{F_{g,B}(n)}$ is $O(n)$
and can be expanded as a Laurent polynomial as in (\ref{eq:main-theorem-eq})
up to order $O\left(n^{-M}\right)$. Moreover it is clear that the
Laurent polynomials arising from different $M$ must be coherent,
i.e., arise from a fixed infinite sequence $a_{-1}(\gamma),a_{0}(\gamma),a_{1}(\gamma)\ldots$
of rational numbers.
\end{proof}
\bibliographystyle{alpha}
\bibliography{database}

\noindent Michael Magee, 

\noindent Department of Mathematical Sciences, 

\noindent Durham University, Lower Mountjoy, DH1 3LE Durham, United
Kingdom

\noindent \texttt{michael.r.magee@durham.ac.uk}
\end{document}